\newcommand{\be}{\begin{equation}}
\newcommand{\ee}{\end{equation}}
\newcommand{\bqs}{\begin{equation*}}
\newcommand{\eqs}{\end{equation*}}
\newcommand{\R}{\mathbb{R}}
\newcommand{\W}{\mathcal{W}}
\numberwithin{equation}{section}
\newcommand\fH[1]{\sbox0{#1}\dimen0=\ht0 \advance\dimen0 -1ex
  \sbox2{\'{}}\sbox2{\raise\dimen0\box2}%
  {\ooalign{\hidewidth\kern.1em\copy2\kern-.5\wd2\box2\hidewidth\cr\box0\crcr}}}
\theoremstyle{plain}
\newtheorem{theorem}{Theorem}[section]
\newtheorem{lem}[theorem]{Lemma}
\newtheorem{rmk}[theorem]{Remark}
\newtheorem{hyp}{Hypothesis}
\title{\vspace{-.4in}{\fontsize{18}{18}\selectfont \textbf{Saddle Transport and Chaos in the Double Pendulum}}\vspace{-.15in}}
\author[1]{Kadierdan Kaheman\footnote{Kaheman and Bramburger contributed equally to this manuscript.}\footnote{Corresponding authors: Kadierdan Kaheman~(kadierk@uw.edu) and Jason J. Bramburger~(jason.bramburger@concordia.ca).\\ Code available at: \href{https://github.com/dynamicslab/Saddle-Mediated-Transport-of-Double-Pendulum}{https://github.com/dynamicslab/Saddle-Mediated-Transport-of-Double-Pendulum}.}}
\author[2]{Jason J. Bramburger$^{*\dagger}$}
\author[3]{J. Nathan Kutz}
\author[1]{Steven L. Brunton}
\affil[1]{\small Department of Mechanical Engineering, University of Washington, Seattle, WA, 98105}
\affil[2]{\small Department of Mathematics and Statistics, Concordia University, Montr\'eal, QC, H3G 1M8}
\affil[3]{\small Department of Applied Mathematics, University of Washington, Seattle, WA, 98105\vspace{-.1in}}
\date{}
\begin{document}
\maketitle

\begin{abstract}

Pendulums are simple mechanical systems that have been studied for centuries and exhibit many aspects of modern dynamical systems theory. 
In particular, the double pendulum is a prototypical chaotic system that is frequently used to illustrate a variety of phenomena in nonlinear dynamics. 
In this work, we explore the existence and implications of codimension-1 invariant manifolds in the double pendulum, which originate from unstable periodic orbits around saddle equilibria and act as separatrices that mediate the global phase space transport.  
Motivated in part by similar studies on the three-body problem, we are able to draw a direct comparison between the dynamics of the double pendulum and transport in the solar system, which exist on vastly different scales. 
Thus, the double pendulum may be viewed as a table-top benchmark for chaotic, saddle-mediated transport, with direct relevance to energy-efficient space mission design. 
The analytical results of this work provide an existence result, concerning arbitrarily long itineraries in phase space, that is applicable to a wide class of two degree of freedom Hamiltonian systems, including the three-body problem and the double pendulum. 
This manuscript details a variety of periodic orbits corresponding to acrobatic motions of the double pendulum that can be identified and controlled in a laboratory setting.          

\end{abstract}

{\noindent \bf Keywords:} Dynamical systems, Chaos, Poincar\'e section, Three-body problem, Double pendulum, Homoclinic orbit, Heteroclinic orbit, Invariant manifolds, Periodic orbits

 \begin{spacing}{.8}
\setlength{\cftbeforesecskip}{4.pt}
\tableofcontents
 \end{spacing}

\section{Introduction }

Two of the oldest problems in classical mechanics involve the study of (1) celestial bodies interacting with each other gravitationally and (2) pendulums. These two types of problems may be derived from simple and foundational laws of nature, making them introductory examples in math and physics curricula. 
In the case of celestial dynamics, all one requires is Newton's second law, $F = ma$, and the law of universal gravitation to produce the equations of motion. These laws produce simple and surprisingly accurate models of how bodies interact in our solar system, which can be used to design fuel efficient satellite and shuttle transport~\cite{NBody1,RossBook,Surfing,MarsdenAMS,JJM1,Caillau} and provide evidence for the existence of extreme trans-Neptunian objects~\cite{PlanetNine,PlanetNine2,PlanetNine3}. 
Similar physical laws can be used to describe the motion of pendulums, which are bodies connected by rods swinging under the influence of gravity. Studies of a single pendulum, consisting of a single mass suspended by a rod, go back at least to Galileo, and their periodic motion has long been used in time keeping, and even to estimate the gravitational constant~\cite{SinglePendulum}. 
As depicted in Figure~\ref{fig:SaddleComparison}, one can add another mass and rod,  suspended from the mass of the first pendulum, to produce the {\em double pendulum}. The study of the double pendulum goes back at least to the work of Bernoulli~\cite{Bernoulli}, and now provides a prototypical example of a chaotic dynamical system~\cite{DPChaos}. Despite the many superficial differences between the motion of celestial bodies and pendulums, this manuscript will demonstrate that the phase space of the double pendulum bears a striking resemblance to that of the three-body problem in astrophysics, thus uniting the study of these centuries old problems.  
Thus, the double pendulum provides a simple experimental analog~\cite{kaheman2022experimental} with which to explore various learning and control algorithms for efficient space mission design. 

\begin{figure}[t]
    \centering
    \includegraphics[width=0.9\textwidth]{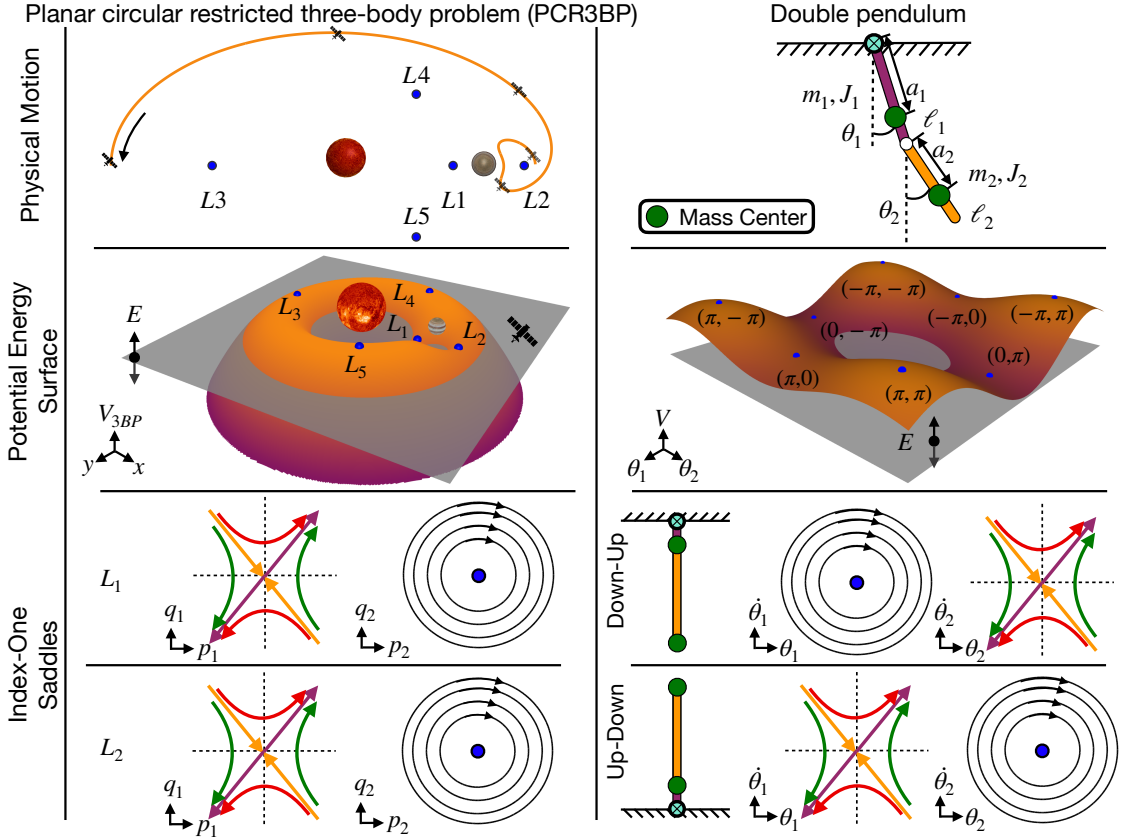}
    \caption{Despite representing dynamics on vastly different scales, the planar circular restricted 3-body problem (PCR3BP) and the double pendulum bear a striking resemblance. The Lagrange points $L_1$ and $L_2$ in the PCR3BP are analogous to the saddle Down-Up and Up-Down equilibria since they all have one-dimensional stable and unstable directions and a two-dimensional center direction.}
    \label{fig:SaddleComparison}
\end{figure}

\begin{figure}[t]
    \centering
    \includegraphics[width=1\textwidth]{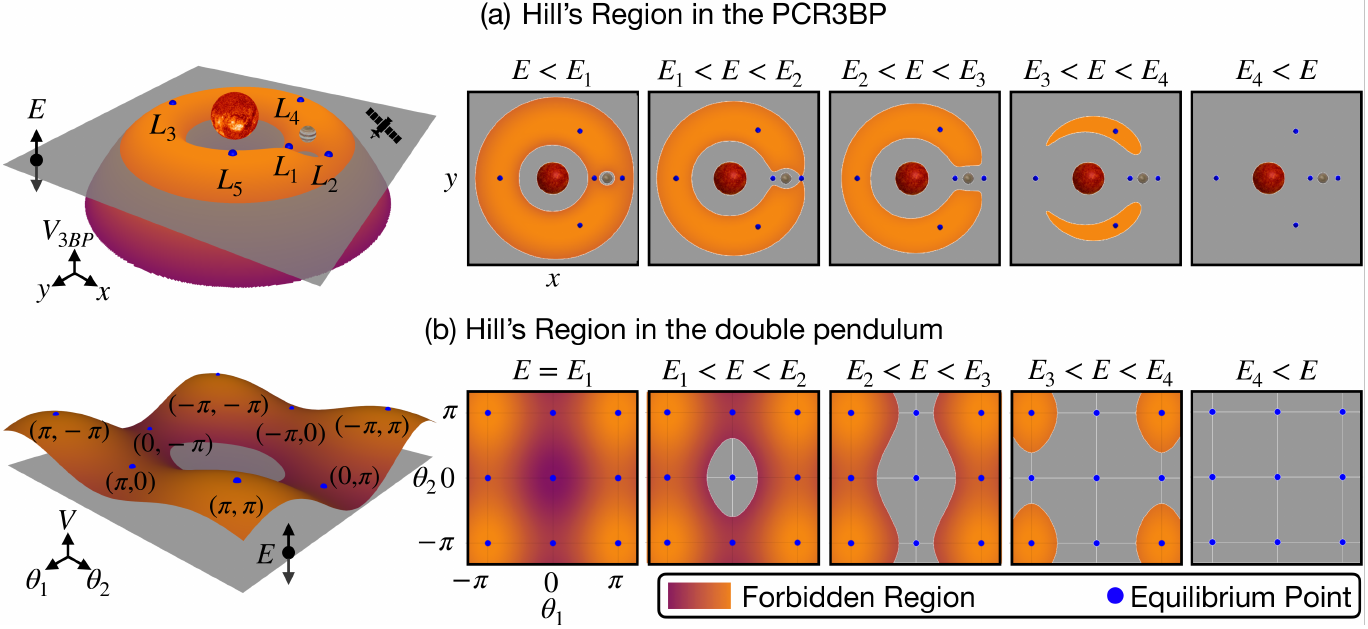}
    \caption{Hill's region comparison of PCR3BP and double pendulum. The energy values $E_i$ are meant to denote the transitions in which the Hill's region is extended to include another critical point of the energy surface.}
    \label{fig:HillsResionComparison}
\end{figure}

\subsection{Multi-Body Problems}

The study of celestial bodies in the presence of Newtonian gravity traditionally begins by examining only two bodies. This constitutes the so-called Kepler problem and it successfully describes the (approximate) elliptical motion of planets in the solar system about the sun, a satellite orbiting a planet, and binary stars orbiting each other. From a mathematical perspective, the Kepler problem can be reduced to a planar ordinary differential equation (ODE) for which all solutions lie along periodic orbits~\cite{Arnold}, thus giving simple and predictable motion. 

Adding even one more body to the system significantly complicates the dynamics, giving way to chaos~\cite{TBPreview}. This was first articulated by Poincar\'e in the late nineteenth century. 
In an effort to understand the motion of three celestial bodies -- the three-body problem -- he pioneered many of the methods that led to the development of modern chaos theory. 
Due to the complication of analyzing the three-body problem in general, a number of simplifications of the problem have been studied~\cite{TBPreview}. 
A prominent simplified model is the restricted three-body problem (R3BP), which consists of two massive bodies that generate a gravitational field and a third relatively massless body that moves in this field~\cite{Koon,RossBook}. 
In the planar circular restricted three-body problem (PCR3BP), the two massive bodies are assumed to form their own Kepler problem, maintaining a constant distance from each other and so generating a circular orbit, while the motion of the third body is confined to the orbital plane of the massive bodies. The result is a first-order four-dimensional ODE representing the planar position and velocity of the massless body, whose equations of motion are left to the appendix. Specific applications often take the massive bodies to be either the Earth and Moon~\cite{Caillau,NBody1} or the Sun and Jupiter~\cite{Koon,RossBook}, while the third body may represent a satellite, a shuttle, or a comet~\cite{TBPreview}. In Figure~\ref{fig:SaddleComparison} we present an illustration of the motion of a satellite governed by the PCR3BP in physical space, along with its potential energy landscape.     

Figure~\ref{fig:SaddleComparison} shows the  five steady-state equilibrium solutions of the PCR3BP, which are referred to as the Lagrange points $L_1$ through $L_5$. As the energy of the system increases, more of the potential energy surface, referred to as the {\em Hill's region}, becomes accessible to the third body as illustrated in Figure~\ref{fig:HillsResionComparison}. 
The Lagrange points $L_1$ and $L_2$ are saddle points, and for a continuum of energies slightly above the energy of these points, there exist unstable periodic orbits (UPOs)~\cite{Moser,Wiggins}, known as Lyapunov orbits, that resemble halos about $L_1$ and $L_2$. 
These Lyapunov orbits have two-dimensional stable and unstable manifolds, which we refer to throughout as {\em tubes} since they are diffeomorphic to a circle crossed with the real line. 
Moreover, these invariant manifold tubes have codimension-1 in the ambient phase space, acting as separatrices that define an interior and exterior region.
It has been proven that these tubes can connect to form homoclinic orbits to these UPOs~\cite{Hom3BP,McGehee,Conley}, resulting in trajectories of the system that can travel significant distances in phase space before returning to where they started. Further numerical studies~\cite{Koon} have demonstrated the presence of heteroclinic connections between Lyapunov orbits. These homoclinic and heteroclinic orbits were used to construct itineraries through phase space that allow one to jump from following the homoclinics to the heteroclinics and back. 
Thus, it is possible to construct trajectories that allow the exploration of phase space with no additional energy expenditure. Comparing observations of the orbit of the comet {\em Oterma} with the paths of the homoclinic and heteroclinic orbits in a Sun-Jupiter system reveals that its trajectory is guided by these global tube structures. Similarly, the trajectory for the {\em Genesis Discovery Mission} transited from the $L_1$ to the $L_2$ Lagrange point through nearly heteroclinic motion in the Earth-Moon system~\cite{LoGenesis,KoonGenesis}. 
Japan's {\em Hiten} lunar mission used this transport network for a low energy transfer~\cite{Hiten}, as did the ISEE-3 spacecraft~\cite{ISEE3}. 
Therefore, in the case of the PCR3BP we find that these invariant manifold tubes associated with the UPOs are crucial to not only understanding the dynamics of the phase space, but also for optimally designing itineraries for exploratory missions around planet and/or moon systems~\cite{RossL1}.

The discovery of the tube structure in the PCR3BP marks a major landmark in our understanding of celestial dynamics and has prompted the study of more complex multi-body systems~\cite{JJM2,JJM3,NBody1}, again aimed at optimal space mission design~\cite{NBody1,RossBook,Surfing,MarsdenAMS,JJM1,MasdemontReview}. Due to the abundance of these tubes in multi-body systems, they have been termed the {\em interplanetary transport network}~\cite{Interplanetary}. These tubes provide gravitationally determined pathways that require little energy to follow and can be used to design itineraries that explore major bodies of our solar system. Further, recent studies have employed computer-assisted proofs to obtain the existence of transverse homoclinic and heteroclinic connections which form the backbone of this network~\cite{CompAst1,CompAst2,CompAst3,CompAst4}.

\subsection{The Double Pendulum}

Much like multi-body systems, the base case of a single pendulum is deceptively simple compared to more general pendulum models. The single pendulum gives rise to purely periodic motion, and it resembles the Kepler problem at low energies with trajectories given by ellipsis in phase space.  Both the Kepler problem and the single pendulum have integrable dynamics, meaning that their trajectories are fully described by the one-dimensional level sets of their Hamiltonian functions~\cite{Integrable}. As a consequence of integrable structure, neither system is chaotic.  

The Kepler problem is to the single pendulum as the three-body problem is to the double pendulum. That is, like the three-body problem, the double pendulum has become a prototypical example of a chaotic system. In this work, we will develop this analogy, showing that the phase space dynamics of the double pendulum bears a striking resemblance to that of the PCR3BP. The double pendulum has two saddle steady-state solutions given by one arm hanging down with the other balanced in the upright position, referred to as the ``Down-Up" and ``Up-Down" equilibria. 
There exist a neighborhood about these points where the dynamics are topologically conjugate to those of the Lagrange points $L_1$ and $L_2$ in the PCR3BP. We illustrate this correspondence in Figure~\ref{fig:SaddleComparison} with a comparison of the potential energy landscapes of the PCR3BP and the double pendulum, a visualization of the Down-Up and Up-Down equilibria, and an illustration of the dynamics near the saddle equilibria. Like the PCR3BP, increasing the energy of the double pendulum opens up more of the potential energy landscape, increasing the size of its associated ``Hill's region'', as illustrated in Figure~\ref{fig:HillsResionComparison}. Using similar techniques to those developed for the PCR3BP~\cite{Koon}, we demonstrate numerically the existence of homoclinic and heteroclinic orbits between the UPOs at energies slightly above those of the Down-Up and Up-Down steady-states. In particular, we demonstrate that the phase space of the double pendulum is similarly organized by global tube structures that enable macroscopic transport in the system with no additional energy expenditure.            

Our work herein establishes the tube structure of the double pendulum, making it a low-stakes and relatively low-cost testbed to explore saddle mediated transport, with direct relevance to the PCR3BP~\cite{Koon}. For a fraction of the cost of building a satellite or space shuttle, a number of researchers have built double pendulums~\cite{spong1995swing,fantoni2000energy,driver2004design,timmermann2011discrete,hesse2018reinforcement,DPChaos,christini1996experimental,myers2020low,rubi2002swing,rafat2009dynamics,freidovich2008periodic,kaheman2019learning}, including the authors of this manuscript~\cite{kaheman2022experimental}.
Thus, the theoretical work put forth here can and will be tested on these physical realizations of the system in a follow-up investigation. 
Our work shows that it is possible to design an itinerary of complex acrobatic motions of the pendulum arms by combining a sequence of homoclinic and heteroclinic connections; homoclinic orbits correspond to full rotations of the unstable pendulum arm, while heteroclinic orbits connect the Up-Down and Down-Up orbits.  
These orbits provide a surprisingly tight analogy with orbits in the PCR3BP. 

\subsection{Contributions}

In this manuscript we present a detailed study of the double pendulum while drawing comparisons to previous work on the PCR3BP. In particular, we follow a similar numerical procedure to~\cite{RossBook,Koon,Barcelona} to determine the global tube structure emanating from the Down-Up and Up-Down saddle points of the double pendulum. We demonstrate that there exist homoclinic and heteroclinic connections that allow for macroscopic transport in both physical space and phase space. We then use these numerical findings to formulate a general set of hypotheses that give way to our major theoretical contribution of this manuscript. Specifically, we show that the connecting trajectories found numerically can be used to prove the existence of long homoclinic, heteroclinic, and periodic trajectories of the double pendulum that can spend arbitrarily long times near each saddle point before continuing on and transiting to the neighborhood of another saddle point. This results in a collection of trajectories that can be used to fully explore the phase space.

Our main result is sufficiently general to also apply to existing work on the PCR3BP, and so it therefore comes as a generalization of the itineraries result proven in~\cite{Koon}. Much like the work on the PCR3BP, we show that there is not only one trajectory that follows a given itinerary, but infinitely many. However, our proofs are constructive in that they explicitly demonstrate that the long trajectories are built up by shadowing the `base' homoclinic and heteroclinic orbits found to exist by the numerical methods herein. This comes from the fact that our proofs are similar to the work in~\cite{BramLDS,BramIsola}, whereby we move to local Poincar\'e sections near the UPOs and use the existence of heteroclinic connections to transit between these local sections. The result is a general description of saddle mediated transport in two degree of freedom Hamiltonian systems that goes beyond the existing theory.   

Outside of multi-body systems and the double pendulum, there are a number of other systems for which transport over vast regions of phase space can be attributed to the tubes of stable and unstable manifolds of UPOs near an index-1 saddle. A notable example is that of chemical reactions where tubes are shown to mediate the reaction and can be used to compute reaction rates and scattering phenomenon~\cite{Chemical,Chemical2,Chemical3}. Saddle mediated transport is also particularly useful in describing transitions in systems with two or more potential wells. For example, the works~\cite{Ship1,Ship2} provide a nonlinear model for ship motion with three potential wells: one for the ship being upright in the middle and wells on either side representing capsizing. Here the tubes associated to the index-1 saddles separating the potential wells explain how one transitions from the upright well into either of the capsizing wells~\cite{Naik}. Similar ideas have been used to explain snap-through buckling of a shallow arch~\cite{Arch} and the motion of a ball rolling on a saddle surface~\cite{Ball}. Our work is relevant to these studies since our analytical results are general enough to apply to all of these situations, thus extending the itineraries of the PCR3BP to a variety of Hamiltonian systems.


This paper is organized as follows. We begin in Section~\ref{sec:Background} with an introduction to index-1 saddles, of which $L_1$ and $L_2$ in the PCR3BP and the Down-Up and Up-Down equilibria in the double pendulum are examples. We describe the linear and nonlinear dynamics near these saddles and also review the importance of the global tube structure associated with them. Much of this is illustrated with recreations of known results for the PCR3BP. In Section~\ref{sec:EOM} we present a derivation of the equations of motion for the {\em physical double pendulum}. This model differs from the simple theoretical double pendulum as it includes the mass of the pendulum arms. A comparison between these two pendulum models is presented in Figure~\ref{Fig:DP_Illustrate} below and our motivation for studying the physical double pendulum comes from our physical model~\cite{kaheman2022experimental} and the goal of implementing our theory in practice in subsequent work. Section~\ref{sec:TubesDP} presents our numerical findings on the global tube structure of the double pendulum. These results include a description of the symmetries of our model in~\S\ref{sec:Symmetry}, linear analysis of the various steady-states which identifies the Down-Up and Up-Down equilibria as index-1 saddles in~\S\ref{sec:linearanalysis}, and a discussion of the numerically identified homoclinic and heteroclinic orbits in~\S\ref{sec:L1Hom}-\ref{sec:Heteroclinic}. Our main theoretical result is presented in Section~\ref{sec:Theory}, in particular in~\S\ref{sec:Theorem}. We then discuss the implication of our theoretical results on the double pendulum in~\S\ref{subsec:DPapplication} and discuss how these results extend the known results of the PCR3BP in~\S\ref{subsec:TBPapplication}. The proof of our main result is left to Section~\ref{sec:Proofs} and then we conclude in Section~\ref{sec:Conclusion} with a discussion of our findings and possible extensions.

\section{Index-1 Saddles}\label{sec:Background} 

Throughout this section we will seek to demonstrate the importance of saddle steady-states to the global dynamics of a Hamiltonian system.  In particular, the global dynamics of both the PCR3BP~\cite{RossBook,Koon} and the double pendulum are mediated by {\em index-1} saddle points. 
Index-1 saddle points are fixed points characterized by a single unstable saddle degree of freedom, with corresponding positive and negative eigenvalue, and the remainder of the degrees of freedom being linearly stable center dynamics, each with corresponding pairs of purely imaginary, complex conjugate eigenvalues.  
The terminology index-1 refers to the fact these orbits have one-dimensional stable and unstable manifolds associated to them.
Index-1 saddles are sometimes referred to as \emph{rank-1} saddles.  

Throughout we denote the Hamiltonian function by $\mathcal{H}$. 
Much of the theory presented for index-1 saddles generalizes to arbitrary numbers of degrees of freedom; however, for simplicity, we consider Hamiltonian systems with two degrees of freedom in this work. 
The resulting dynamical system has a four-dimensional phase space. Our interest will be in understanding the local dynamics near these index-1 saddles.


\subsection{Linear Dynamics Near an Index-1 Saddle}

Let us begin by assuming that we have an equilibrium of the resulting 4-dimensional ODE and that linearizing about this equilibrium results in four eigenvalues $\pm \lambda$ and $\pm \mathrm{i}\omega$, for some real $\lambda,\omega > 0$. Through an invertible transformation we may translate this equilibrium to the origin and bring the resulting linearized dynamics into the Jordan normal form
\begin{equation}\label{Linear}
	\begin{bmatrix}
		\dot p_1 \\ \dot q_1 \\ \dot p_2 \\ \dot q_2
	\end{bmatrix} = \begin{bmatrix}
		-\lambda & 0 & 0 & 0 \\
		0 & \lambda & 0 & 0 \\
		0 & 0 & 0 & -\omega \\
		0 & 0 & \omega & 0 \\
	\end{bmatrix}\cdot\begin{bmatrix}
		p_1 \\ q_1 \\ p_2 \\ q_2
	\end{bmatrix}. 
\end{equation} 
We refer the reader to \cite[Section~2.6]{RossBook} for a demonstration of how to arrive at the above normal form for index-1 saddles of the PCR3BP. We further comment that the linearization about all index-1 saddles in a 2 degree-of-freedom Hamiltonian system are topologically conjugate to the normal form \eqref{Linear}, with the proof being the same as the demonstration in \cite{RossBook}. The linearized system~\eqref{Linear} is itself a conservative system with quadratic Hamiltonian given by
\begin{equation}\label{LinearHam}
	\mathcal{H}_\mathrm{loc}(p_1,q_1,p_2,q_2) = \lambda p_1q_1 + \frac{\omega}{2}(p_2^2 + q_2^2).
\end{equation} 
Solutions of~\eqref{Linear} are given by
\begin{equation}\label{LinearSol}
	p_1(t) = p_1^0\mathrm{e}^{-\lambda t}, \quad q_1(t) = q_1^0\mathrm{e}^{\lambda t}, \quad p_2(t) + \mathrm{i}q_2(t) = (p_2^0 + \mathrm{i}q_2^0)\mathrm{e}^{-\mathrm{i}\omega t}, 
\end{equation}
for any constants $p_1^0,q_1^0,p_2^0,q_2^0\in\mathbb{R}$. Importantly,~\eqref{Linear} has three constants of motion
\begin{equation}
	C_1 = p_1q_1, \quad C_2 = p_2^2 + q_2^2, \quad C_3 = \mathcal{H}_\mathrm{loc}
\end{equation}
which we will use to understand the flow near the saddle point.

From the above, we can see that projecting the dynamics into the $(p_1,q_1)$-plane results in a standard planar saddle system. Similarly, projecting into the $(p_2,q_2)$-plane results in a linear center consisting of harmonic oscillator motion. See Figure~\ref{fig:SaddleComparison} for an illustration. For some fixed $h \in \mathbb{R}^+$ we can then consider the dynamics of~\eqref{Linear} inside the level set $\mathcal{H}_\mathrm{loc} = h$. First, rearranging~\eqref{LinearHam} gives
\begin{equation}
	p_2^2 + q_2^2 = \frac{2}{\omega}(h - \lambda p_1q_2),
\end{equation}
so that in the bounding scenario that $C_1 = h/\lambda$ we have that the dynamics in the $(p_2,q_2)$-plane collapse to the point $(0,0)$ and trajectories lie along the hyperbolas $p_1q_1 = h/\lambda$. For $0 < C_1 < h/\lambda$ the dynamics in the $(p_2,q_2)$-plane lie along a circle and the trajectories are diffeomorphic to a circle crossed with a line. 

When $p_1 = q_1 = 0$ the resulting dynamics are confined to the circles
\begin{equation}
	S^1_h := \bigg\{(p_2,q_2):\ p_2^2 + q_2^2 = \frac{2h}{\omega}\bigg\}
\end{equation}
for each $h > 0$. These invariant circles constitute periodic orbits of the system, referred to as {\em Lyapunov orbits} in the context of celestial dynamics governed by many body equations. The above circles are only one of three nontrivial examples of trajectories with the constant of motion $C_1 = 0$. Simply restricting to the invariant manifold $q_1 = 0$ again gives $C_1 = 0$, but now the dynamics of $p_1$ need not be trivial. Using the solution~\eqref{LinearSol} we find that $p_1(t) \to 0$ as $t \to \infty$ while the dynamics in the $(p_2,q_2)$-plane are still restricted to $S^1_h$. Therefore, the set $q_1 = 0$ is the stable manifold of the periodic orbit $S^1_h$, denoted $W^s(S^1_h)$. Similarly, taking $p_1 = 0$ and using the fact that $q_1(t) \to 0$ as $t \to -\infty$, we have that the set $p_1 = 0$ is the unstable manifold of the period orbit $S^1_h$, denoted $W^u(S^1_h)$. Both the stable and unstable manifolds of $S^1_h$ are homeomorphic to circles crossed with lines, which we henceforth refer to as tubes. Furthermore, the presence of both stable and unstable manifolds for the periodic orbits implies they are all saddles like the equilibrium they surround, and therefore are unstable.

\subsection{Nonlinear Dynamics Near an Index-1 Saddle}

In the previous subsection we described the linearized dynamics near an index-1 saddle point in a Hamiltonian system. We now extend this discussion to the nonlinear dynamics near the index-1 saddle. The same invertible change of variable that brings the linear dynamics to the system~\eqref{Linear} transforms the Hamiltonian of the nonlinear system into the form
\begin{equation}\label{LinearHam2}
	\mathcal{H}(p_1,q_1,p_2,q_2) = \mathcal{H}_\mathrm{loc}(p_1,q_1,p_2,q_2) + h.o.t.,
\end{equation}
where {\em h.o.t.} denotes the `higher order terms' which in this case are at least cubic and $\mathcal{H}_\mathrm{loc}$ is given in~\eqref{LinearHam}. Similarly, the dynamics in a neighborhood of the equilibrium (shifted to the origin in the $p_1,q_1,p_2,q_2$ variables) takes the form 
\begin{equation}\label{Nonlinear}
	\begin{bmatrix}
		\dot p_1 \\ \dot q_1 \\ \dot p_2 \\ \dot q_2
	\end{bmatrix} = \begin{bmatrix}
		-\lambda & 0 & 0 & 0 \\
		0 & \lambda & 0 & 0 \\
		0 & 0 & 0 & -\omega \\
		0 & 0 & \omega & 0 \\
	\end{bmatrix}\cdot\begin{bmatrix}
		p_1 \\ q_1 \\ p_2 \\ q_2
	\end{bmatrix} + h.o.t.
\end{equation} 
where now the higher order terms are at least quadratic. It is apparent from the fact that the linearization includes a complex conjugate pair of purely imaginary eigenvalues that one cannot apply results such as the Hartman--Grobman theorem to conclude that the dynamics~\eqref{Nonlinear} are locally conjugate to the dynamics of~\eqref{Linear}. Fortunately, the works~\cite{Moser,Wiggins} exploit the Hamiltonian structure of the underlying dynamical system to provide the necessary results that demonstrate this. Precisely, there exists a small ball around $(p_1,q_1,p_2,q_2) = (0,0,0,0)$ for which the dynamics of~\eqref{Nonlinear} are equivalent to the dynamics of~\eqref{Linear}. 

The consequence of the above is that in a neighborhood of any index-1 saddle point comprises a continuum of UPOs. Furthermore, these UPOs have two-dimensional stable and unstable manifolds resembling tubes that eventually leave the region of validity for the conjugacy with the linear system~\eqref{Linear}. Using~\eqref{LinearHam2}, it follows that these UPOs can only be guaranteed to exist for $\mathcal{H} = h$ with $0 < h \ll 1$ since the higher order terms become more important as $h$ increases. By abuse of notation we will refer to these UPOs again as $S^1_h$ since the UPOs of~\eqref{Linear} and~\eqref{Nonlinear} lie in one-to-one correspondence when $h$ is small. Moreover, the local components of the stable and unstable manifolds, $W^s_\mathrm{loc}(S^1_h)$ and $W^u_\mathrm{loc}(S^1_h)$, respectively, are approximated by the sets $\{q_1 = 0\}$ and $\{p_1 = 0\}$, respectively. These points become important in the following sections for numerically following the global stable and unstable manifolds of different UPOs.

\subsection{Global Invariant Manifold Tube Dynamics}

The previous subsections focus on local aspects of the Hamiltonian phase space in neighborhoods of index-1 saddles. Per the previous discussion, the saddles and their nearby UPOs all have stable and unstable manifolds that extend beyond these neighborhoods. Importantly, intersections of these stable and unstable manifolds give the existence of global structures such as homoclinic and heteroclinic orbits in the phase space. These homoclinic and heteroclinic orbits organize the global geometry of phase space and enable transport over vast regions of space with no extra energy expenditure. 

Note that with a 2 degree of freedom Hamiltonian system, we are restricted to the (generically) three-dimensional level sets of the Hamiltonian function. In one of these three-dimensional level sets, the index-1 saddles have one-dimensional stable and unstable manifolds, meaning that the existence of a homoclinic orbit is unlikely since these stable and unstable manifolds would have to coincide. Such a phenomenon is not robust with respect to generic parameter manipulation, and so this situation should not be expected without additional assumptions such as symmetries. Alternatively, for a connected continuum of values of the Hamiltonian slightly above the value at the index-1 saddle we have the UPOs, each of which have two-dimensional stable and unstable manifolds inside the three-dimensional level sets. In this case an intersection of these manifolds is robust and expected to be one-dimensional, leading to the existence of a homoclinic orbit. The robustness of these intersections implies that the existence of a homoclinic orbit for one value of the Hamiltonian is expected to imply the existence for values of the Hamiltonian in an open neighborhood of said value. Heteroclinic connections between index-1 saddles are similarly robust and are made up of intersections of one UPO's stable manifold intersecting along a generically one-dimensional curve with another UPO's unstable manifold. We again expect the existence of such a heteroclinic connection to exist for a range of values of the Hamiltonian, if it exists at all.      

\begin{figure}[t]
    \centering
    \includegraphics[width=0.85\textwidth]{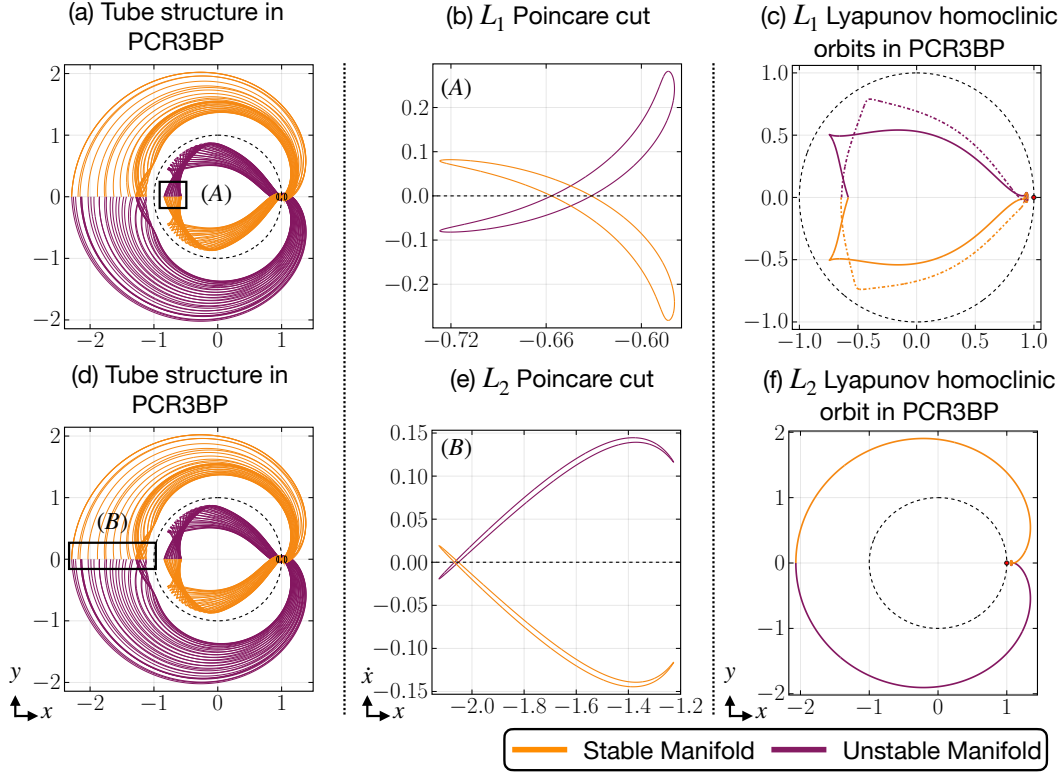}
    \caption{Numerically confirming the existence of homoclinic trajectories to the $L_1$ and $L_2$ Lyapunov orbits in the PCR3BP is done by finding intersections between their stable and unstable manifolds in the Poincar\'e section $y = 0$ with $x < 0$. The manifolds are shown in (a) and (d), while their intersections with the Poincar\'e section are illustrated in (b) and (e). Intersections of the stable and unstable manifolds of each Lyapunov orbit results in a homoclinic orbit, some of which are depicted in (c) and (f). In (c), both symmetric (solid) and asymmetric (dashed) $L_1$ homoclinic orbits are shown. }
    \label{fig:TBP_HomoclinicOrbit}
\end{figure}

The existence of homoclinic orbits to the UPOs near index-1 saddles has notably been proven for the Lyapunov orbits around the $L_1$ Lagrange point in the PCR3BP~\cite{Conley,McGehee}. These results were improved by~\cite{LMS} to show that under appropriate conditions one could conclude that these intersections were transverse, meaning that they are indeed robust. More in line with our work herein, the work~\cite{Koon} followed the ideas put forth by~\cite{Barcelona} to explore the system numerically and detect the existence of these homoclinic orbits to not only Lyapunov orbits of the $L_1$ Lagrange point, but also the $L_2$ point. In a process that will be described in more detail below, one simulates the unstable manifold of the Lyapunov orbit forward in time and the stable manifold backward in time until they meet at an appropriately defined Poinacar\'e section. The intersection of these manifolds with the Poincar\'e section will be diffeomorphic to a circle and the intersection of these circles represent the desired homoclinic trajectories. This process and the found homoclinic orbits are illustrated in Figure~\ref{fig:TBP_HomoclinicOrbit}. We refer to the appendix for details on the PCR3BP including the equations of motion, the location of the Lagrange points $L_1$ and $L_2$, and the parameter values used in our computations.

\begin{figure}[t]
    \centering
    \includegraphics[width=0.75\textwidth]{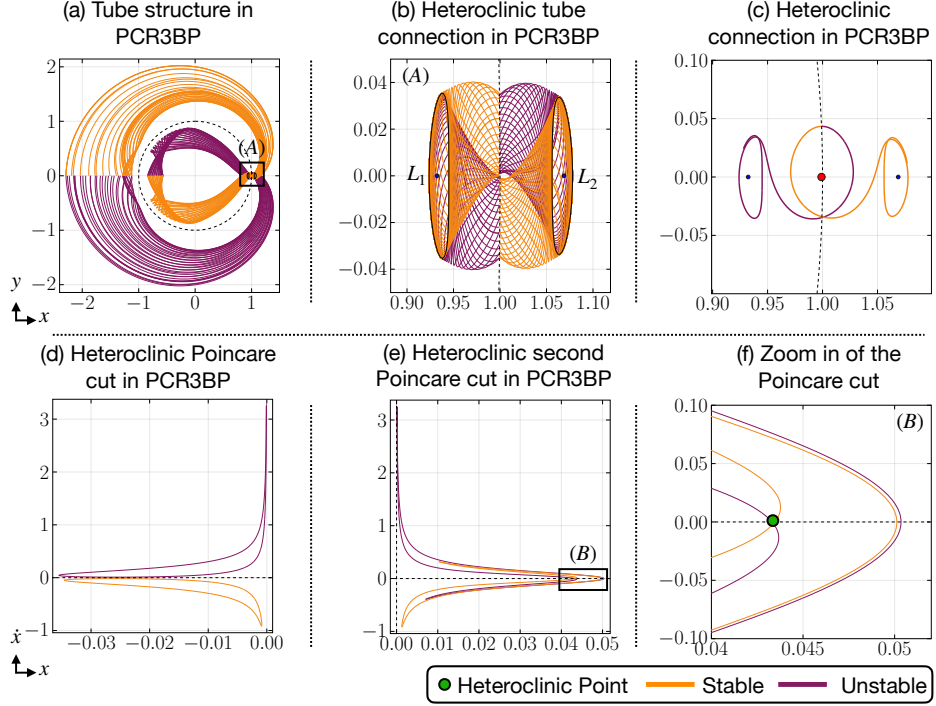}
    \caption{Heteroclinic orbits between the $L_1$ and $L_2$ Lyapunov orbits of the PCR3BP are found numerically in the same way as the homoclinic orbits in Figure~\ref{fig:TBP_HomoclinicOrbit}, as shown in (a) and (b). A heteroclinic orbit from an $L_1$ Lyapunov orbit to an $L_2$ Lyapunov orbit is shown in (c) and is identified from manifold intersections in the Poincar\'e section in (d). Similarly, (e) and (f) show manifold intersections in the Poincar\'e section resulting in heteroclinic orbits from an $L_2$ to an $L_1$ Lyapunov orbit.}
    \label{fig:TBP_HeteroclinicOrbit}
\end{figure}

Beyond just the homoclinic orbits,~\cite{Koon} also numerically demonstrates the existence of heteroclinic orbits between the $L_1$ and $L_2$ Lyapunov orbits. These are obtained in the same way as the homoclinic orbits in that stable and unstable manifolds are simulated backward and forward in time, respectively, to meet an appropriate Poincar\'e section to find intersections of these manifolds. This process and the results are summarized in Figure~\ref{fig:TBP_HeteroclinicOrbit}. The effect this has is that for values of the Hamiltonian slightly above that of the index-1 saddles there exist level sets containing Lyapunov orbits to both $L_1$ and $L_2$ which each have homoclinic orbits and are connected by a heteroclinic orbit. These orbits are used in~\cite{Koon} to construct itineraries through phase space that allow one to jump from following the homoclinics to the heteroclinics and back, thus providing trajectories that allow the lightweight object to fully explore the phase space with no additional energy expenditure. In what follows we will perform a similar investigation for the double pendulum by identifying homoclinic and heteroclinic trajectories to index-1 saddles. We further prove a general result that uses these orbits that we identify numerically to construct arbitrarily long trajectories in phase space that are analogous to the itineraries identified.

\section{Equations of Motion for the Physical Double Pendulum} \label{sec:EOM}

In this section we provide the relevant equations of motion for the physical double pendulum. We note that many mathematical investigations of the double pendulum use the {\em point-mass double pendulum}, which ignores the mass of the pendulum rod. To make our investigation applicable to future laboratory experiments, we will consider the {\em physical double pendulum} which includes the mass of the pendulum arms. In Figure~\ref{Fig:DP_Illustrate} we provide a comparison between the theoretical and physical double pendulum models and we direct the reader to the appendix for the equations of motion for the theoretical double pendulum. 

To begin, let us denote the mass of the first pendulum arm by $m_1$ and the mass of second pendulum arm by $m_2$. 
Then, let $\ell_1$ and $\ell_2$ be the lengths of the first and second arms, respectively. Next, define $a_1$ and $a_2$ as the positions of the centers of mass of the first and second pendulum arm, respectively. Let $J_1$ and $J_2$ denote the moment of inertia of the first and second pendulum arm. The constant of gravitational acceleration is $g$. Finally, we take $\theta_1$ and $\theta_2$ to be the time-dependent rotational angles of pendulum arms, as illustrated in Figure~\ref{Fig:DP_Illustrate}.

\begin{figure}[t]
    \centering
    \includegraphics[width=0.7\textwidth]{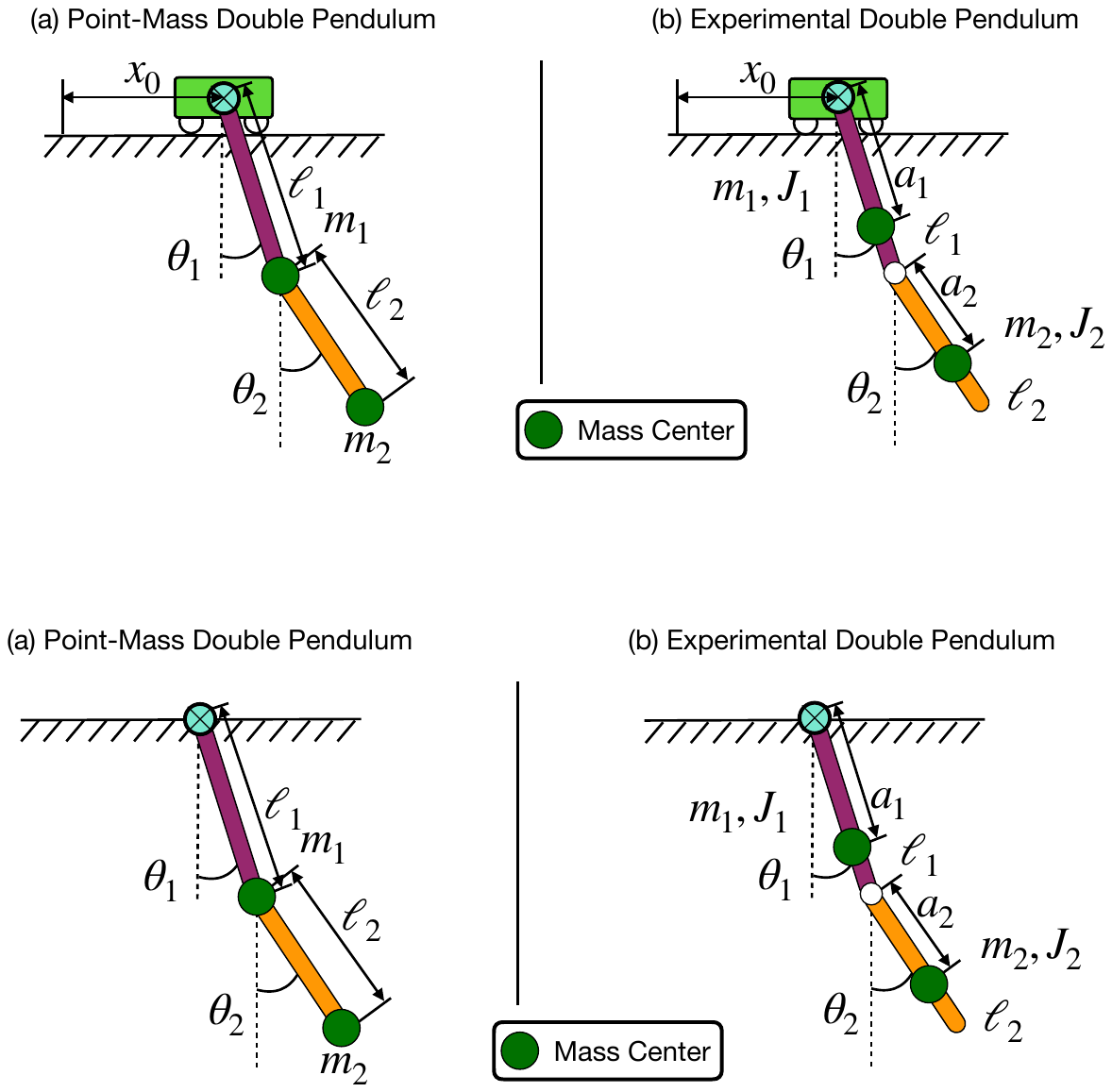}
    \caption{This figure illustrates the difference between the point-mass model of the double pendulum (left) and the more realistic physical double pendulum (right) studied herein. The physical model includes both the inertial ($J_i$) and mass $(m_i)$ of the pendulum arms.}
    \label{Fig:DP_Illustrate}
\end{figure}

The kinetic energy of the physical double pendulum is given by
\begin{equation}\label{eq:edp_kinetic_energy}
    T=\frac{1}{2}\left(m_1a_1\dot{\theta}_1^2 +m_2\left(\ell_1^2\dot{\theta}_1^2+a_2^2\dot{\theta}_2^2+2\ell_1a_2\dot{\theta}_1\dot{\theta}_2\cos(\theta_1-\theta_2)\right)+J_1\dot{\theta}_1^2+J_2\dot{\theta}_2^2\right)
\end{equation}
and the potential energy is
\begin{equation}\label{eq:edp_potential_energy}
    \begin{split}
        V&=-g\left(m_1a_1\cos(\theta_1)+m_2(\ell_1\cos(\theta_1)+a_2\cos(\theta_2))\right).
    \end{split}
\end{equation}
Using~\eqref{eq:edp_kinetic_energy} and~\eqref{eq:edp_potential_energy}, the Lagrangian of the physical double pendulum can be calculated as
\begin{equation}\label{eq:edp_lagrangian}
    L=T-V,
\end{equation}
and the Hamiltonian of the system is given by
\begin{equation}\label{eq:edp_hamiltonian}
    \mathcal{H} =T+V.
\end{equation}
Throughout our investigation in this manuscript we will neglect the effect of friction, although this is important for real experimental control.

Using the Lagrangian~\eqref{eq:edp_lagrangian}, the equations of motion of the double pendulum are given by
\begin{subequations}
\label{eq:edp_eom}
\begin{align}
        \frac{\partial}{\partial t}\frac{\partial L}{\partial \dot{\theta}_1}-\frac{\partial L}{\partial \theta_1}&=0, \label{eq:edp_eom1}\\
        \frac{\partial}{\partial t}\frac{\partial L}{\partial \dot{\theta}_2}-\frac{\partial L}{\partial \theta_2}&=0. \label{eq:edp_eom2}
\end{align}
\end{subequations}
Equation~\eqref{eq:edp_eom1} yields
\begin{equation}\label{eq:edp_eom_cal1}
\begin{split}
0 &= J_1\ddot{\theta}_1 + \ell_1^2\ddot{\theta}_1m_2
        + a_1^2\ddot{\theta}_1m_1 +\ell_1gm_2\sin(\theta_1) + a_1gm_1\sin(\theta_1) \\ &\qquad + \ell_1a_2\dot{\theta}_2^2m_2\sin(\theta_1 - \theta_2)
        + \ell_1a_2\ddot{\theta}_2m_2\cos(\theta_1 - \theta_2),\\
\end{split}
\end{equation}
while equation~\eqref{eq:edp_eom2} yields
\begin{align}\label{eq:edp_eom_cal2}
        0 &= J_2\ddot{\theta}_2 - \ell_1a_2\dot{\theta}_1^2m_2\sin(\theta_1 - \theta_2) + \ell_1a_2\ddot{\theta}_1m_2\cos(\theta_1 - \theta_2).
\end{align}
From~\eqref{eq:edp_eom_cal1} and~\eqref{eq:edp_eom_cal2} we can solve for $\ddot\theta_1$ and $\ddot\theta_2$, giving
\begin{subequations}\label{eq:edp_ddtheta}
\begin{align}
        \ddot{\theta}_1&=\frac{A_{10}\sin(\theta_1)+A_{11}\sin(\theta_1-\theta_2)+A_{12}\sin(\theta_1-2\theta_2)+A_{22}\sin(2\theta_1-2\theta_2)}{D(\theta_1 - \theta_2)}\\
            \ddot{\theta}_2&=\frac{B_{01}\sin(\theta_2)+B_{11}\sin(\theta_1-\theta_2)+B_{21}\sin(2\theta_1-\theta_2)+B_{22}\sin(2\theta_1-2\theta_2)}{D(\theta_1 - \theta_2)}
\end{align}
\end{subequations}
where
\begin{equation}
    \begin{split}
        A_{10} &= - \ell_1ga_2^2m_2^2 - 2a_1gm_1a_2^2m_2 - 2J_2l_1gm_2 - 2J_2a_1gm_1 \\
        A_{11} &= - 2\ell_1a_2^3\dot{\theta}_2^2m_2^2 - 2J_2\ell_1a_2\dot{\theta}_2^2m_2 \\
        A_{12} &= -\ell_1a_2^2gm_2^2 \\
        A_{22} &= -\ell_1^2a_2^2\dot{\theta}_1^2m_2^2 \\
        B_{01} &= -a_2m_2(gm_2\ell_1^2 - gm_1\ell_1a_1 + 2gm_1a_1^2 + 2J_1g)\\
        B_{11} &= a_2m_2(2m_2\ell_1^3\dot{\theta}_1^2 + 2m_1\ell_1a_1^2\dot{\theta}_1^2 + 2J_1\ell_1\dot{\theta}_1^2)\\
        B_{21} &= a_2m_2(gm_2\ell_1^2 + a_1gm_1\ell_1)\\
        B_{22} &= 2m_2(gm_2\ell_1^2 + a_1gm_1\ell_1)\\
        D(\theta_1 - \theta_2) &=-\ell_1^2a_2^2m_2^2cos(\theta_1-\theta_2)^2 + \ell_1^2a_2^2m_2^2 +  J_2\ell_1^2m_2 + m_1a_1^2a_2^2m_2 \\&\qquad + J_2m_1a_1^2 + J_1a_2^2m_2 + J_1J_2.
    \end{split}
\end{equation}
Here we use the convention that $A_{ij},B_{ij}$ are the coefficients of the energy preserved Hamiltonian system for the $\ddot\theta_1$ and $\ddot\theta_2$ equations, respectively, while the subscripts denote the coefficients on the $\theta_1$ and $\theta_2$ terms inside the sine function they are multiplied against. The notation $D(\theta_1 - \theta_2)$ represents the denominator of both equations. Writing~\eqref{eq:edp_ddtheta} as a first-order ODE gives
\begin{subequations}\label{eq:edp_ode}
\begin{align}
        \dot{\theta}_1&=\omega_1,\\
        \dot{\theta}_2&=\omega_2,\\
        \dot{\omega}_1&=\frac{A_{10}\sin(\theta_1)+A_{11}\sin(\theta_1-\theta_2)+A_{12}\sin(\theta_1-2\theta_2)+A_{22}\sin(2\theta_1-2\theta_2)}{D(\theta_1 - \theta_2)},\\
        \dot{\omega}_2&=\frac{B_{01}\sin(\theta_2)+B_{11}\sin(\theta_1-\theta_2)+B_{21}\sin(2\theta_1-\theta_2)+B_{22}\sin(2\theta_1-2\theta_2)}{D(\theta_1 - \theta_2)}.
\end{align}
\end{subequations}
System~\eqref{eq:edp_ode} therefore represents the full equations of motion for the double pendulum on a cart that we investigate for the remainder of this manuscript. 

Throughout this manuscript our numerical experiments will use the parameter values
\begin{equation}\label{ParamVals} 
	\begin{split}
		&m_1 = 0.0938\mathrm{kg}, \quad m_2 = 0.1376\mathrm{kg}, \quad a_1 = 0.1086\mathrm{m}, \quad a_2 = 0.1168\mathrm{m}, \\
		&\ell_1= 0.1727\mathrm{m}, \quad \quad J_1 = 10^{-4}\mathrm{kgm}^2, \quad J_2 = 10^{-4}\mathrm{kgm}^2, \quad g = 9.808\mathrm{m}/\mathrm{s}^2.
	\end{split}
\end{equation}
All of the above values were obtained via parameter estimation from the physical model constructed in~\cite{kaheman2022experimental} and chosen to demonstrate that these results are physically realizable. We have also found that other similar choices of parameter values lead to nearly identical results and so we do not expect that the restriction to these parameter values is a limitation of this work. Finally, notice that the length of the second arm, $\ell_2$, is notably absent from the equations of motion~\eqref{eq:edp_ode} and so its value is not necessary for our investigation of the physical double pendulum.

\section{Tube Dynamics of the Double Pendulum}\label{sec:TubesDP}

In this section we will consider the tube structure of transport between index-1 saddles of system~\eqref{eq:edp_ode}. Over the following subsections we seek to identify its index-1 saddles and their global tube dynamics. This will help us to identify the macroscopic transport mechanisms of the physical double pendulum modelled by equations~\eqref{eq:edp_ode} in a similar manner to what has been done for a variety of other Hamiltonian systems, including the PCR3BP. 

We begin in~\S\ref{sec:Symmetry} by describing the reversible symmetry of~\eqref{eq:edp_ode}. This reversible symmetry will help us to classify the trajectories between neighborhoods of index-1 saddles as either symmetric or asymmetric, with those that are asymmetric coming in pairs related by applying the symmetry transformation. We then proceed to identify the index-1 saddles of~\eqref{eq:edp_ode} in~\S\ref{sec:linearanalysis} by analyzing the linearization of the system about each of its steady-states. We identify two index-1 saddles given by one of the pendulum arms standing straight up (unstable) and the other hanging down (stable). We provide a numerical justification for the existence of homoclinic orbits to UPOs in the neighborhoods of these saddles in~\S\ref{sec:L1Hom} and~\S\ref{sec:L2Hom}. We then conclude this section in~\S\ref{sec:Heteroclinic} by numerically demonstrating the existence of heteroclinic orbits between UPOs in the neighborhoods of each of the index-1 saddles.


\subsection{Reversible Symmetry}\label{sec:Symmetry}

Beyond the Hamiltonian structure of~\eqref{eq:edp_ode}, we note that the system is also reversible in the sense that if $(\theta_1(t),\theta_2(t),\omega_1(t),\omega_2(t))$ is a solution, then so is $(\theta_1(-t),\theta_2(-t),-\omega_1(-t),-\omega_2(-t))$. Precisely, we define the reverser 
\begin{equation}\label{Reverser}
	\mathcal{R} = \begin{bmatrix}
		1 & 0 & 0 & 0 \\ 0 & 1 & 0 & 0 \\ 0 & 0 & -1 & 0 \\ 0 & 0 & 0 & -1 \\
	\end{bmatrix}
\end{equation} 
and say that $\Theta(t) := (\theta_1(t),\theta_2(t),\omega_1(t),\omega_2(t))$ and $\mathcal{R}\Theta(-t)$ are solutions. Using this property, we will refer to a solution as {\em symmetric} if $\Theta(t) = \mathcal{R}\Theta(-t)$ for all $t \in \mathbb{R}$. Importantly, if $\Theta_*$ is a symmetric equilibrium solution of~\eqref{eq:edp_ode}, i.e. $\mathcal{R}\Theta_* = \Theta_*$, then the associated stable manifold $W^s(\Theta_*)$ and unstable manifold $W^u(\Theta_*)$ are such that $W^s(\Theta_*) = \mathcal{R}W^u(\Theta_*)$. Therefore, homoclinic orbits of symmetric equilibria are either symmetric or asymmetric, for which the latter case implies that they come in pairs, related by applying $\mathcal{R}$ and reversing the flow of time. The preceding discussion also holds for the symmetric UPOs near a symmetric index-1 saddle which are the focus of much of our discussion going forward. We will therefore emphasize the symmetric and asymmetric homoclinic orbits that can be observed in our numerical findings.

\subsection{Linear Analysis}\label{sec:linearanalysis}

From the equations of motion~\eqref{eq:edp_ode} we can see that equilibria come in the form $(\theta_1,\theta_2,\omega_1,\omega_2) = (\pi k_1,\pi k_2,0,0)$ for every pair of integers $(k_1,k_2) \in \mathbb{Z}^2$. All of these equilibria are symmetric with respect to the action of the reverser $\mathcal{R}$, defined in~\eqref{Reverser}. Although we have obtained a lattice of equilibria, periodicity of the components $\theta_{1,2}$ implies that we may restrict our analysis to the four distinct steady-states
\begin{subequations}\label{DPsteady}
\begin{align}
	\mathrm{Down-Down}: \quad &(\theta_1,\theta_2,\omega_1,\omega_2) = (0,0,0,0) \\
	\mathrm{Down-Up}: \quad &(\theta_1,\theta_2,\omega_1,\omega_2) = (0,\pi,0,0) \\
	\mathrm{Up-Down}: \quad &(\theta_1,\theta_2,\omega_1,\omega_2) = (\pi,0,0,0) \\
	\mathrm{Up-Up}: \quad &(\theta_1,\theta_2,\omega_1,\omega_2) = (\pi,\pi,0,0).
\end{align}  
\end{subequations}
The naming of the states represents the vertical position of the arms of the double pendulum. Precisely, Down-Down has both arms hanging straight down, Down-Up has the first arm with parameters $(\ell_1,m_1)$ hanging down and the other standing straight up, Up-Down has the first arm standing straight up and the other hanging down, and the Up-Up state has both arms standing straight up. We note that we have listed the steady-states in~\eqref{DPsteady} in order of ascending energy values, according to the Hamiltonian~\eqref{eq:edp_hamiltonian}. Our results that follow apply for all choices of parameters, not just those provided in~\eqref{ParamVals}.  

Linearizing~\eqref{eq:edp_ode} about an equilibrium $(\theta_1,\theta_2,\omega_1,\omega_2) = (\pi k_1,\pi k_2,0,0)$ with $k_1,k_2\in\mathbb{Z}$ results in a Jacobian matrix of the form
\begin{equation}\label{DPLinMat}
	\begin{bmatrix}
		0 & 0 & 1 & 0 \\
		0 & 0 & 0 & 1 \\
		\frac{\sigma_2\sigma_3}{\sigma_1^2}-\frac{\sigma_4}{2\sigma_1} & \frac{\sigma_5}{\sigma_1}-\frac{\sigma_2\sigma_3}{\sigma_1^2} & 0 & 0 \\
		\frac{\sigma_6}{\sigma_1}-\frac{\sigma_7\sigma_8}{\sigma_1^2} & -\frac{\sigma_{9}}{\sigma_1}-\frac{\sigma_7\sigma_8}{\sigma_1^2} & 0 & 0
	\end{bmatrix}
\end{equation} 
with 
\begin{equation}
    \begin{split}
        \sigma_1&=- \ell_1^2a_2^2m_2^2\cos(k_1\pi - k_2\pi)^2 + \ell_1^2a_2^2m_2^2 \\
            & \quad + J_2\ell_1^2m_2 + m_1a_1^2a_2^2m_2 + J_2m_1a_1^2 + J_1a_2^2m_2 + J_1J_2, \\
        \sigma_2 &=\ell_1^2 a_2^2 g m_2^2 \cos(k_1\pi - k_2\pi) \sin(k_1\pi - k_2\pi), \\
       \sigma_3 &=(\ell_1 a_2^2 m_2^2 \sin(k_1\pi) + 2 J_2 \ell_1 m_2 \sin(k_1\pi) \\
            &\quad + \ell_1 a_2^2 m_2^2 \sin(k_1\pi - 2 k_2\pi) + 2 J_2 a_1 m_1 \sin(k_1\pi) + 2 a_1 a_2^2 m_1 m_2 \sin(k_1\pi)), \\
        \sigma_4 &=(\ell_1 a_2^2 m_2^2 \cos(k_1\pi) + 2 J_2 \ell_1 m_2 \cos(k_1\pi)\\
            &\quad + \ell_1 a_2^2 m_2^2 \cos(k_1\pi - 2 k_2\pi) + 2 J_2 a_1 m_1 \cos(k_1\pi) + 2 a_1 a_2^2 m_1 m_2 \cos(k_1\pi)),\\
        \sigma_5 &=\ell_1 a_2^2 g m_2^2 \cos(k_1\pi - 2 k_2\pi), \\
        \sigma_6 &=\ell_1 a_2 g m_2 \cos(2 k_1\pi - k_2\pi) (\ell_1 m_2 + a_1 m_1), \\
        \sigma_7 &=2 \ell_1^2 a_2^3 g m_2^3 \cos(k_1\pi - k_2\pi) \sin(k_1\pi - k_2\pi), \\
        \sigma_8 &=(J_1 \sin(k_2\pi) + \ell_1^2 m_2 \sin(k_2\pi) + a_1^2 m_1 \sin(k_2\pi)\\
            &\quad - \ell_1^2 m_2 \cos(k_1\pi - k_2\pi) \sin(k_1\pi) - \ell_1 a_1 m_1 \cos(k_1\pi - k_2\pi) \sin(k_1\pi)), \\
        \sigma_9 &=a_2 g m_2 (J_1 \cos(k_2\pi) + \ell_1^2 m_2 \cos(k_2\pi) + a_1^2 m_1 \cos(k_2\pi) \\
            &\quad - \ell_1^2 m_2 \sin(k_1\pi - k_2\pi) \sin(k_1\pi) - \ell_1 a_1 m_1 \sin(k_1\pi - k_2\pi) \sin(k_1\pi)).
    \end{split}
\end{equation}
Due to the block structure of~\eqref{DPLinMat}, the square of its eigenvalues are equal to those of the lower left $2\times 2$ matrix 
\begin{equation}\label{MiniMatEDP}
	\begin{bmatrix}
		\frac{\sigma_2\sigma_3}{\sigma_1^2}-\frac{\sigma_4}{2\sigma_1} & \frac{\sigma_5}{\sigma_1}-\frac{\sigma_2\sigma_3}{\sigma_1^2} \\
		\frac{\sigma_6}{\sigma_1}-\frac{\sigma_7\sigma_8}{\sigma_1^2} & -\frac{\sigma_{9}}{\sigma_1}-\frac{\sigma_7\sigma_8}{\sigma_1^2}
	\end{bmatrix}.
\end{equation}
We will now proceed to classify the stability of the four equilibria in~\eqref{DPsteady} using the matrix~\eqref{MiniMatEDP}.\\

\noindent{\bf Down-Down.} The Down-Down state is obtained by setting $k_1 = k_2 = 0$. In this case~\eqref{MiniMatEDP} becomes
\begin{equation}
	\begin{bmatrix}
		-\frac{g(m_2a_2^2+J_2)(\ell_1m_2+a_1m_1)}{J_2m_2\ell_1^2+m_1m_2a_1^2a_2^2+J_2m_1a_1^2+J_1m_2a_2^2+J_1J_2} & \frac{\ell_1a_2^2gm_2^2}{J_2m_2\ell_1^2+m_1m_2a_1^2a_2^2+J_2m_1a_1^2+J_1m_2a_2^2+J_1J_2} \\
		\frac{\ell_1a_2gm_2(\ell_1m_2+a_1m_1)}{J_2m_2\ell_1^2+m_1m_2a_1^2a_2^2+J_2m_1a_1^2+J_1m_2a_2^2+J_1J_2} & -\frac{a_2gm_2(m_2\ell_1^2+m_1a_1^2+J_1)}{J_2m_2\ell_1^2+m_1m_2a_1^2a_2^2+J_2m_1a_1^2+J_1m_2a_2^2+J_1J_2}
	\end{bmatrix}.
\end{equation} 
Since the trace of the above matrix is negative and the determinant is positive for all the physical parameters $m_1$,$m_2$,$a_1$,$a_2$,$J_1$,$J_2$,$g$,$\ell_1$,$\ell_2$, it follows that both eigenvalues are distinct and strictly negative. Therefore, the linearization~\eqref{DPLinMat} about the Down-Down state has two pairs of purely complex eigenvalues, making it a linear center. \\

\noindent{\bf Down-Up.} The Down-Up equilibrium is obtained by setting $k_1 = 0$ and $k_2 = 1$. The matrix~\eqref{MiniMatEDP} becomes 
\begin{equation}
	\begin{bmatrix}
		-\frac{g(m_2a_2^2+J_2)(\ell_1m_2+a_1m_1)}{J_2m_2\ell_1^2+m_1m_2a_1^2a_2^2+J_2m_1a_1^2+J_1m_2a_2^2+J_1J_2} & \frac{\ell_1a_2^2gm_2^2}{J_2m_2\ell_1^2+m_1m_2a_1^2a_2^2+J_2m_1a_1^2+J_1m_2a_2^2+J_1J_2} \\
		-\frac{\ell_1a_2gm_2(\ell_1m_2+a_1m_1)}{J_2m_2\ell_1^2+m_1m_2a_1^2a_2^2+J_2m_1a_1^2+J_1m_2a_2^2+J_1J_2} & \frac{a_2gm_2(m_2\ell_1^2+m_1a_1^2+J_1)}{J_2m_2\ell_1^2+m_1m_2a_1^2a_2^2+J_2m_1a_1^2+J_1m_2a_2^2+J_1J_2}
	\end{bmatrix}.
\end{equation} 
which has a negative determinant for all relevant parameter values. Therefore, the above matrix has one positive and one negative real eigenvalue along with a pair of purely imaginary eigenvalues. Hence, the Down-Up equilibrium is an index-1 saddle.  \\

\noindent{\bf Up-Down.} We obtain the Up-Down equilibrium by setting $k_1 = 1$ and $k_2 = 0$. The matrix~\eqref{MiniMatEDP} is then given by
\begin{equation}
	\begin{bmatrix}
		\frac{g(m_2a_2^2+J_2)(\ell_1m_2+a_1m_1)}{J_2m_2\ell_1^2+m_1m_2a_1^2a_2^2+J_2m_1a_1^2+J_1m_2a_2^2+J_1J_2} & -\frac{\ell_1a_2^2gm_2^2}{J_2m_2\ell_1^2+m_1m_2a_1^2a_2^2+J_2m_1a_1^2+J_1m_2a_2^2+J_1J_2} \\
		\frac{\ell_1a_2gm_2(\ell_1m_2+a_1m_1)}{J_2m_2\ell_1^2+m_1m_2a_1^2a_2^2+J_2m_1a_1^2+J_1m_2a_2^2+J_1J_2} & -\frac{a_2gm_2(m_2\ell_1^2+m_1a_1^2+J_1)}{J_2m_2\ell_1^2+m_1m_2a_1^2a_2^2+J_2m_1a_1^2+J_1m_2a_2^2+J_1J_2}
	\end{bmatrix}.
\end{equation} 
and following as in the Down-Up equilibrium, we find that the Up-Down equilibrium is also an index-1 saddle. \\

\noindent{\bf Up-Up.} The Up-Up state has $k_1 = k_2 = 1$, resulting in~\eqref{MiniMatEDP} taking the form
\begin{equation}
	\begin{bmatrix}
		\frac{g(m_2a_2^2+J_2)(\ell_1m_2+a_1m_1)}{J_2m_2\ell_1^2+m_1m_2a_1^2a_2^2+J_2m_1a_1^2+J_1m_2a_2^2+J_1J_2} & -\frac{\ell_1a_2^2gm_2^2}{J_2m_2\ell_1^2+m_1m_2a_1^2a_2^2+J_2m_1a_1^2+J_1m_2a_2^2+J_1J_2} \\
		-\frac{\ell_1a_2gm_2(\ell_1m_2+a_1m_1)}{J_2m_2\ell_1^2+m_1m_2a_1^2a_2^2+J_2m_1a_1^2+J_1m_2a_2^2+J_1J_2} & \frac{a_2gm_2(m_2\ell_1^2+m_1a_1^2+J_1)}{J_2m_2\ell_1^2+m_1m_2a_1^2a_2^2+J_2m_1a_1^2+J_1m_2a_2^2+J_1J_2}
	\end{bmatrix}.
\end{equation} 
The above matrix is the result of negating all of the entries of the Down-Down analysis. It follows that the linearization~\eqref{DPLinMat} about the Up-Up state has two positive eigenvalues and two negative eigenvalues. From the nomenclature above, the Up-Up state is an {\em index-2 saddle}. \\

\begin{figure}
    \centering
    \includegraphics[height=0.55\textwidth]{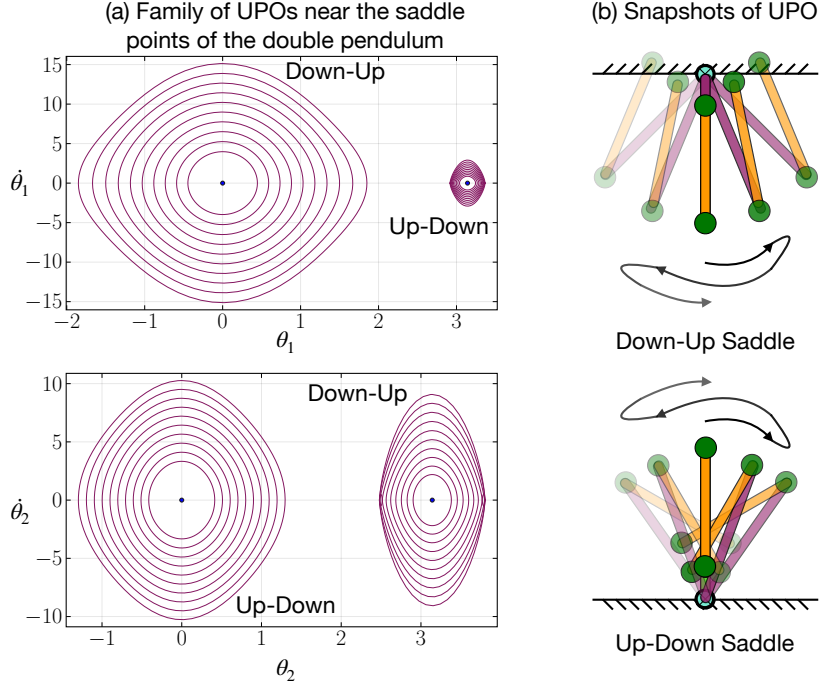}
    \caption{Near the index-1 saddles of the double pendulum there are infinitely many UPOs. These UPOs are projected into (a) the $(\theta_1,\dot\theta_1)$-plane and the $(\theta_2,\dot\theta_2)$-plane with a cartoon of their physical motion in the double pendulum shown in (b).}
    \label{fig:DP_UPO}
\end{figure}

From the above we find that the Down-Up and Up-Down equilibria are index-1 saddles. Importantly, our work in Section~\ref{sec:Background} gives that in a neighborhood of these index-1 saddles there exist infinitely many UPOs. These UPOs are analogous to the Lyapunov orbits of the $L_1$ and $L_2$ Lagrange points of the PCR3BP. However, in the case of the double pendulum these UPOs may be easier to visualize. Precisely, the `Down' arm of these index-1 saddles should be understood as being stable, while the `Up' arm is unstable, both due to the force of gravity. Therefore, the UPOs near these Down-Up and Up-Down saddles have the `Up' arm undergoing a slight wobble, while the `Down' arm exhibits little variation since it is stable. This is demonstrated in Figure~\ref{fig:DP_UPO} where one can see a collection of these UPOs projected into both the $(\theta_1,\dot\theta_1)$ and $(\theta_2,\dot\theta_2)$ planes. In the following subsections we will work to numerically identify homoclinic and heteroclinic trajectories associated to these UPOs near the index-1 saddles of the double pendulum.

\subsection{Tube Dynamics Near the Down-Up State}\label{sec:L1Hom}

We begin by describing our process of identifying homoclinic trajectories to the UPOs near the Down-Up equilibrium of the double pendulum. We emphasize that although we refer to these orbits throughout as 'homoclinic' they are actually heteroclinic orbits in phase space. This potential source of confusion comes from the periodicity of the $\theta_1$ and $\theta_2$ components in~\eqref{eq:edp_ode}. Precisely, a trajectory that connections the UPOs near the Down-Up equilibrium $(\theta_1,\theta_2,\omega_1,\omega_2) = (0,\pi,0,0)$ to those near another Down-Up equilibrium $(\theta_1,\theta_2,\omega_1,\omega_2) = (0,\pi\pm 2\pi,0,0)$ comes as a heteroclinic orbit in phase space if one does not quotient by the periodicity of the $\theta_1$ and $\theta_2$ components, but in physical space such a connection appears to return to where it started while having the second arm undergoing a full clockwise or counterclockwise rotation. As the motion of the double pendulum is best understood in physical space, we will hereby refer to trajectories that connect UPOs near any of the Down-Up equilibriums, $(0,\pi \pm 2\pi k,0,0)$, $k \in \mathbb{Z}$, as homoclinic. Finally, we comment that our numerical investigations have revealed that true homoclinic trajectories that asymptotically approach one of the UPOs near $(0,\pi,0,0)$ do not exist, meaning that only 'physical' homoclinic trajectories described above can exist.

We can numerically obtain the UPOs near the Down-Up equilibrium by searching for symmetric periodic solutions with the value of the Hamiltonian~\eqref{eq:edp_hamiltonian} slightly above that of the Down-Up equilibrium. The search for these periodic solutions can be posed as a root-finding problem constrained by the fact that we must remain within a Hamiltonian level set everywhere on the trajectory. This process is identical to that used in~\cite{RossBook} to identify homoclinic trajectories in the PCR3BP and to implement this process numerically we use the Julia package $Zygote$~\cite{liao2019differentiable}. A demonstration of these techniques is included in the repository associated with this manuscript. With these UPOs, as displayed in Figure~\ref{fig:DP_UPO}, we then use their Floquet spectrum to locally approximate the stable and unstable directions associated to the UPO. By simulating these approximate stable and unstable directions backward and forward in time, respectively, according to the ODE~\eqref{eq:edp_ode} we obtain accurate representations of the stable and unstable manifolds, i.e. the tubes, associated to each UPO.

\begin{figure}
    \centering
    \includegraphics[width=0.8\textwidth]{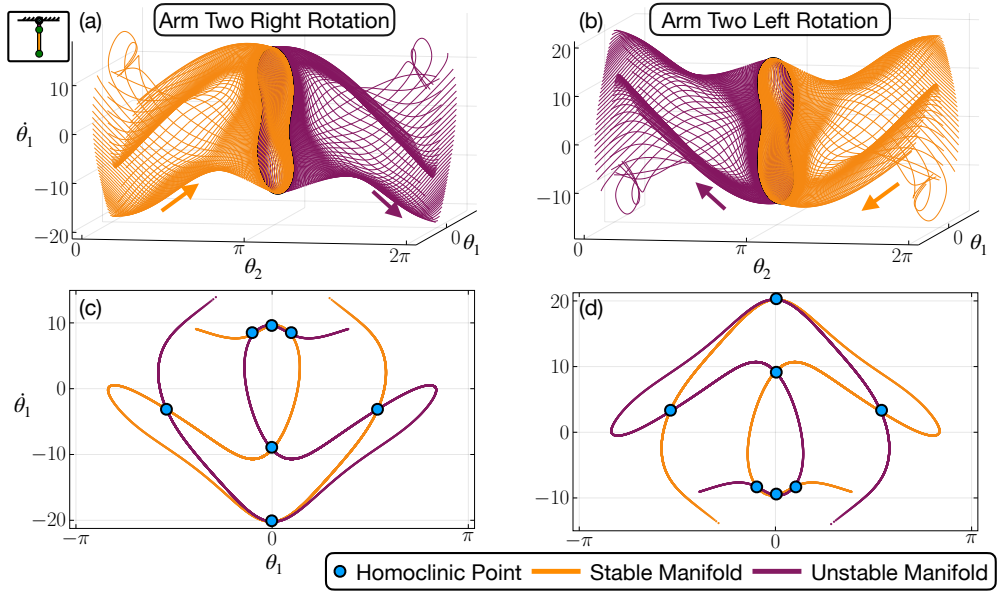}
    \caption{(a) The clockwise rotating portion of the tube structure of the stable and unstable manifolds associated to a UPO near the Down-Up equilibrium with Hamiltonian value $\mathcal{H} = 0.2$. In (b) we show the counterclockwise portion of the tube structure, related to (a) by applying the reverser~\eqref{Reverser}. In (c) and (d) we present the intersection of the tubes with the Poincar\'e section $\theta_2 = 2\pi k$, $k \in \mathbb{Z}$. Intersections of the stable and unstable manifolds in the Poincar\'e section represent 'physical' homoclinic trajectories that connect the UPOs near the Down-Up equilibrium.}
    \label{fig:EDP_TubeStructure_L1}
\end{figure}

To identify homoclinic trajectories we flow the unstable manifold forward in time until $\theta_2 = 2\pi$. Similarly, we flow the stable manifold backward in time until $\theta_2 = 0$. Notice that upon quotienting for the periodicity in $\theta_2$ both the stable and unstable manifolds have been simulated to the same region in physical space, given by the second arm hanging straight down. We use the hyperplane $\theta_2 = 2\pi k$ with $k \in \mathbb{Z}$ as our Poincar\'e section to identify homoclinic trajectories. We further note that the reversible symmetry of~\eqref{eq:edp_ode} guarantees that we could equivalently flow the unstable manifold forward in the direction that decreases the value of $\theta_2$ until it reaches $\theta_2 = 0$ to meet the Poincar\'e section, and similarly for the stable manifold meeting $\theta_2 = 2\pi$. The tube structure of these stable and unstable manifolds are given in Figure~\ref{fig:EDP_TubeStructure_L1}(a) and their reversible counterparts are shown in Figure~\ref{fig:EDP_TubeStructure_L1}(b) for $\mathcal{H} = 0.2$, above the value of the Down-Up equilibrium at $\mathcal{H} = -0.1754$. Having the unstable manifold meet $\theta_2 = 2\pi$ represents the second arm moving clockwise, while having the unstable manifold meet $\theta_2 = 0$ represents the second arm moving counterclockwise.

We can identify `physical' homoclinic trajectories near the Down-Up equilibrium of the double pendulum using the Poincar\'e section $\theta_2 = 2\pi k$, $k \in \mathbb{Z}$. Indeed, Figure~\ref{fig:EDP_TubeStructure_L1}(c) and (d) plot the $(\theta_1,\dot\theta_1)$ plane corresponding to the tube structures presented in (a) and (b), respectively, of the same figure. Intersections of the stable and unstable manifolds in the Poincar\'e section represent the homoclinic orbits that are the focus of this section. Such intersections are homoclinic trajectories since $\theta_1,\theta_2$, and $\dot\theta_1$ are equal at these points, and it can be verified numerically that the Hamiltonian structure of system~\eqref{eq:edp_ode} confines the value of $\dot\theta_2$ to be the same here as well. As one can see, for this value of the Hamiltonian there are numerous intersections. Those with $\theta_1 = 0$ represent symmetric, or reversible, homoclinic orbits while those intersections with $\theta_1 \neq 0$ represent the asymmetric homoclinic orbits. As discussed in \S~\ref{sec:Symmetry}, the asymmetric orbits come in pairs, represented in the Poincar\'e section by the symmetry over the $\theta_1 = 0$ line. From our choice of the Poincar\'e section, all such homoclinic trajectories represent the double pendulum starting near the Down-Up equilibrium and having the second arm completing a full rotation before returning to a neighborhood of the Down-Up equilibrium.\footnote{To visualize the behavior of those homoclinic orbits presented in Fig.~\ref{fig:EDP_TubeStructure_L1} in physical space see \href{https://github.com/dynamicslab/Saddle-Mediated-Transport-of-Double-Pendulum}{https://github.com/dynamicslab/Saddle-Mediated-Transport-of-Double-Pendulum}}.

\begin{figure}[t]
    \centering
    \includegraphics[width=0.8\textwidth]{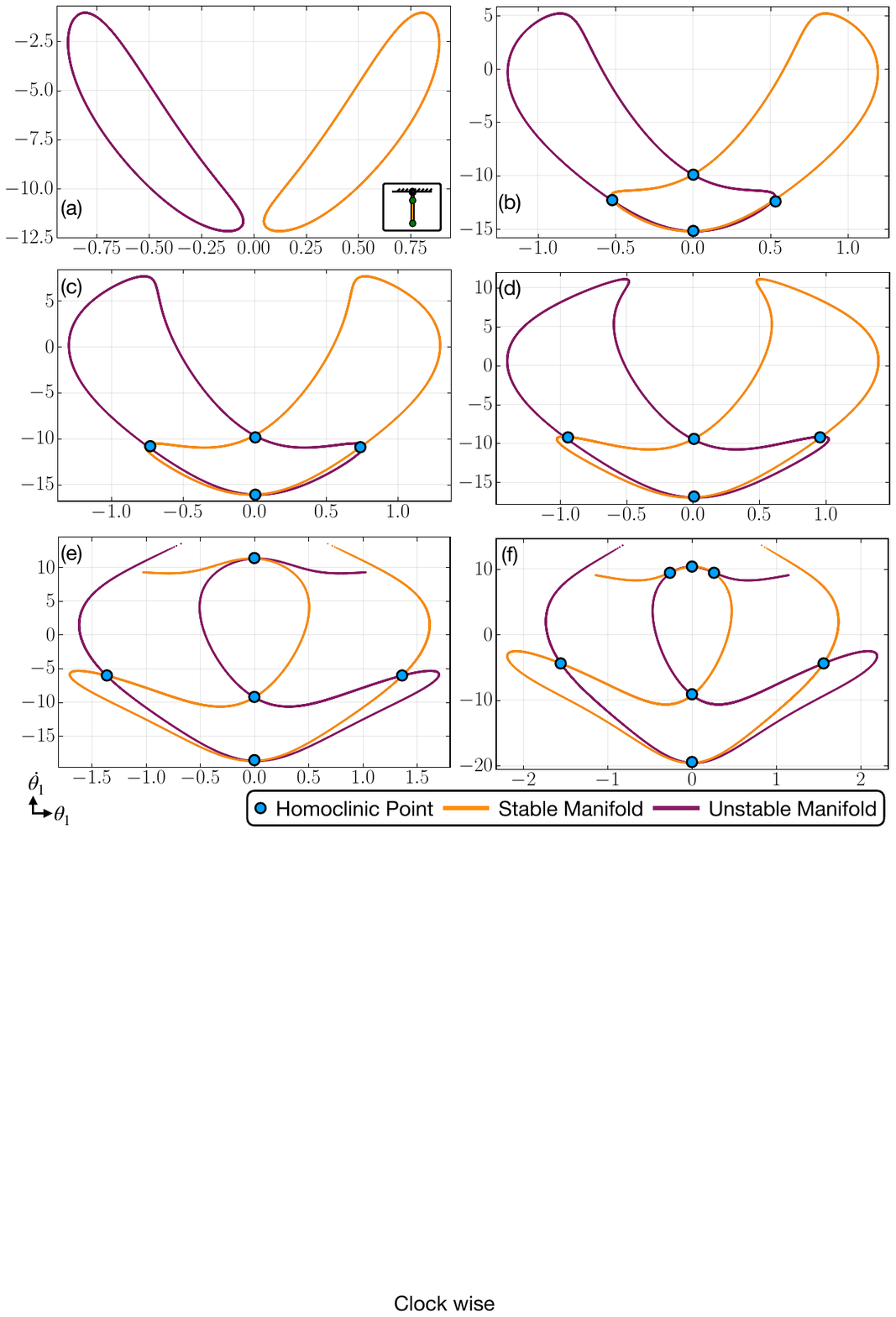}
    \caption{Increasing the energy of the double pendulum up from the energy of the Down-Up steady-state results in an increasing number of homoclinic trajectories of the UPOs near the Down-Up equilibrium. Energy values are (a) $\mathcal{H} = -0.147$, (b) $\mathcal{H} = -0.07$, (c) $\mathcal{H} = -0.034$, (d) $\mathcal{H} = 0.06$, (e) $\mathcal{H} = 0.102$, and (f) $\mathcal{H} = 0.157$. The Down-Up equilibrium has energy $\mathcal{H} = -0.1754$.}
    \label{fig:EDP_PoincareCut_L1_VariesEng}
\end{figure}

Figure~\ref{fig:EDP_PoincareCut_L1_VariesEng} shows that increasing the Hamiltonian energy~\eqref{eq:edp_hamiltonian}, from that of the Down-Up equilibrium,  results in a steadily increasing number of homoclinic trajectories. These homoclinic orbits arise via saddle-node and symmetry-breaking pitchfork bifurcations as the energy is increased, coming from further manifold intersections. For relatively large values of the energy, as in panels (e) and (f), flowing some parts of stable and unstable manifolds towards the Poincar\'e section takes a long time. The result is that some of our intersections of the stable and unstable manifolds with the Poincar\'e section do not present themselves as closed loops. We expect that simulating the tubes for a significantly longer time will complete their intersections with the Poincar\'e section, but this would require significant computational time and is not necessary for our exploration here.

\begin{figure}[t]
    \centering
    \includegraphics[width=0.8\textwidth]{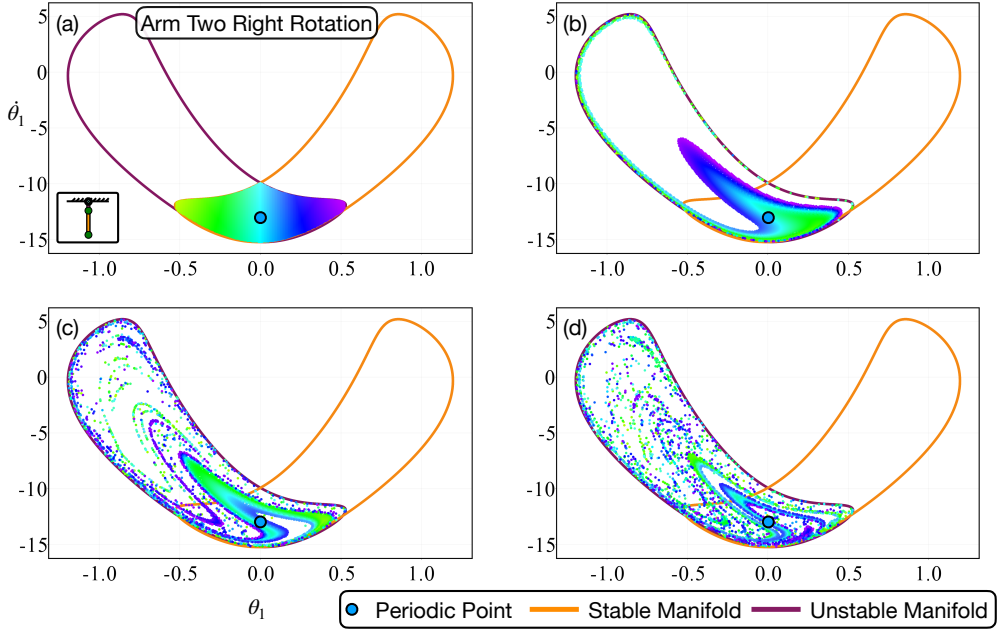}
    \caption{Trajectories that start inside of the unstable tube will remain inside of it for all forward time. This is demonstrated by flowing numerous points shown in (a) which lie inside both the stable and unstable tubes of a UPO near the Down-Up equilibrium with energy $\mathcal{H}=-0.06985$ forward until they intersect the Poincar\'e section again at (b) $\theta_2 = 4\pi$, (c) $\theta_2 = 6\pi$, and (d) $\theta_2 = 8\pi$. Flowing these same points backward in time produces results that are mirrored over $\theta_1 = 0$, likely resulting in a fractal set that remains inside both the stable and unstable tubes for all forward and backward time. The location of a symmetric periodic orbit that remains inside both tubes for all times is also plotted.}
    \label{fig:EDP_L1_ColorWheel_LowEng}
\end{figure}

The fact that these stable and unstable tubes intersect means that the system exhibits macroscopic transport mechanisms that allow one to traverse large distances in phase space, eventually returning to the region where it started. Precisely, trajectories that are initialized inside of an unstable tube will remain inside it for all forward time, thus repeatedly coming back to the Poincar\'e section $\theta_2 = 2\pi k$ and intersecting within the closed curve created by the intersection with the unstable tube. This is demonstrated in Figure~\ref{fig:EDP_L1_ColorWheel_LowEng} where we simulate numerous trajectories with initial conditions from the Poincar\'e section that lie in the intersection of the stable and unstable tubes for energy $\mathcal{H}=-0.06985$. After each pass from the Poincar\'e section up to the UPO near the Down-Up equilibrium and back, one sees that these trajectories intersect the Poincar\'e section inside the closed curve created by the unstable tube, while spreading out over the interior of the tube. One can achieve a similar phenomenon by simulating these initial conditions backward in time, forcing them to remain inside of the stable tube forever. Visualizing this can be achieved by simply mirroring the forward iterates in Figure~\ref{fig:EDP_L1_ColorWheel_LowEng} over $\theta_1 = 0$. Thus, it appears that those trajectories that remain inside both tubes for all forward and backward time become a fractal set, which we suspect forms a horseshoe set~\cite{Smale}. In fact, our theoretical results below in Theorem~\ref{thm:Main} demonstrate that this is indeed the case in a neighborhood of each homoclinic orbit, while Figure~\ref{fig:EDP_L1_ColorWheel_LowEng} provides evidence for a more global horseshoe structure.     

This apparent horseshoe structure means that there exist a plethora of bounded solutions that return infinitely often to a neighborhood of the UPO near the Down-Up equilibrium. As an example, we have identified a simple symmetric periodic orbit that in physical space has the second arm of the double pendulum completing a full $2\pi$ rotation over each period. 
The intersection of this periodic orbit with the Poincar\'e section is plotted in Figure~\ref{fig:EDP_L1_ColorWheel_LowEng}. Interestingly, it lies almost perfectly between the location of the two symmetric homoclinic orbits that form the boundary of the tubes restricted to $\theta_1 = 0$. Although we do not have an explanation for this, we do not believe it to be a coincidence and expect it to be related to the precise horseshoe structure generated by the intersection of the interiors of the stable and unstable tubes.

\subsection{Tube Dynamics Near the Up-Down State}\label{sec:L2Hom}

The process of identifying homoclinic trajectories to the UPOs near the Up-Down equilibrium is similar to what was done in the previous subsection. Again, we were not able to identify homoclinic trajectories in phase space but have sought to identify the `physical' homoclinic trajectories that connect UPOs near the Up-Down equilibrium after accounting for the periodicity of the $\theta_1$ and $\theta_2$ variables. For the UPOs near the Up-Down equilibrium we find that these physical homoclinic trajectories have the first arm making a full clockwise or counterclockwise rotation before returning to where they started. In phase space this means that $\theta_1$ increases or decreases by exactly $2\pi$ over the course of the homoclinic trajectory.

\begin{figure}
    \centering
    \includegraphics[width=0.85\textwidth]{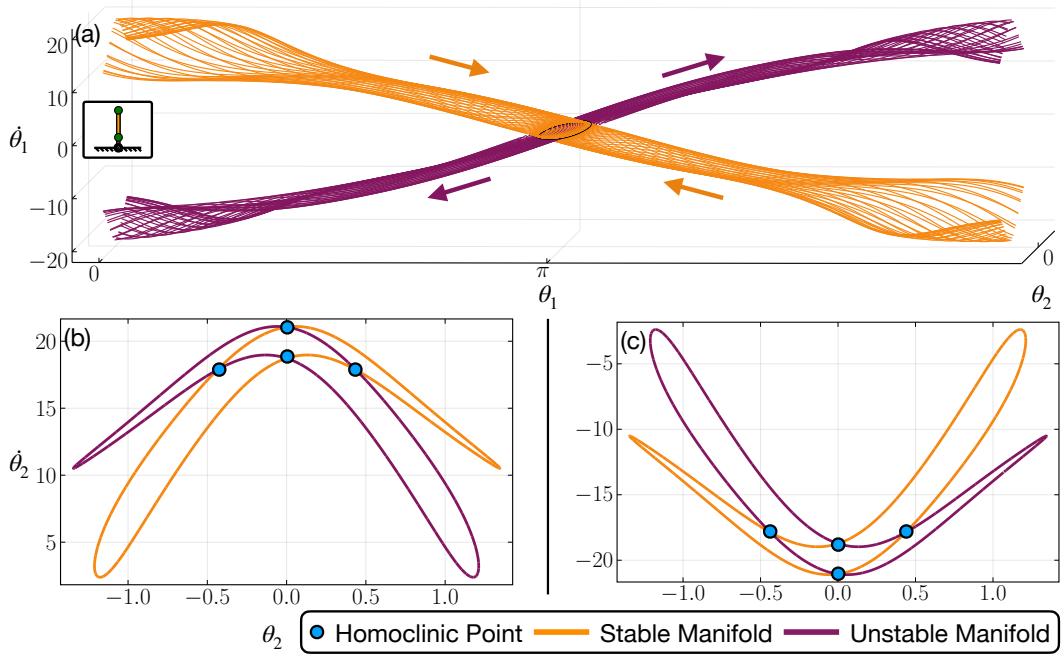}
    \caption{Panel (a) presents the stable and unstable tubes of a UPO near the Up-Down steady-state in the variables $(\theta_1,\theta_2,\dot\theta_1)$ with energy $\mathcal{H} = 0.2$. In (b) and (c) we present the intersection of the tubes with the Poincar\'e section $\theta_1 = 2\pi k$, $k \in \mathbb{Z}$. Intersections of the stable and unstable manifolds in the Poincar\'e section represent 'physical' homoclinic trajectories that connect the UPOs near the Up-Down state.}
    \label{fig:EDP_TubeStructure_L2}
\end{figure}

We can identify the UPOs near the Up-Down equilibrium and use their Floquet spectrum to follow the stable and unstable tubes backward and forward in time, respectively. We flow these stable and unstable tubes associated to these UPOs according to~\eqref{eq:edp_ode} until they meet the Poincar\'e section $\theta_1 = 2\pi k$, $k \in \mathbb{Z}$. In Figure~\ref{fig:EDP_TubeStructure_L2}(a) we present the stable and unstable manifolds of a UPO near the Up-Down equilibrium with energy $\mathcal{H} = 0.2$. The resulting intersections with the Poincar\'e section are further plotted in panels (b) and (c), for which the former represents clockwise rotations of the first arm, while the latter represents counterclockwise rotations. Again we are able to identify multiple symmetric and asymmetric homoclinic orbits that enable macroscopic transport throughout the phase space of the double pendulum\footnote{To visualize the behavior of those homoclinic orbits presented in Fig.~\ref{fig:EDP_TubeStructure_L2} in physical space see \href{https://github.com/dynamicslab/Saddle-Mediated-Transport-of-Double-Pendulum}{https://github.com/dynamicslab/Saddle-Mediated-Transport-of-Double-Pendulum}.}. As with the UPOs near the Down-Up equilibrium, we find that increasing the energy above that of the Up-Down equilibrium results in sequences of saddle-node and symmetry-breaking pitchfork bifurcations to more and more homoclinic orbits. We do not present a figure with these results for brevity.  

\begin{figure}
    \centering
    \includegraphics[width=0.8\textwidth]{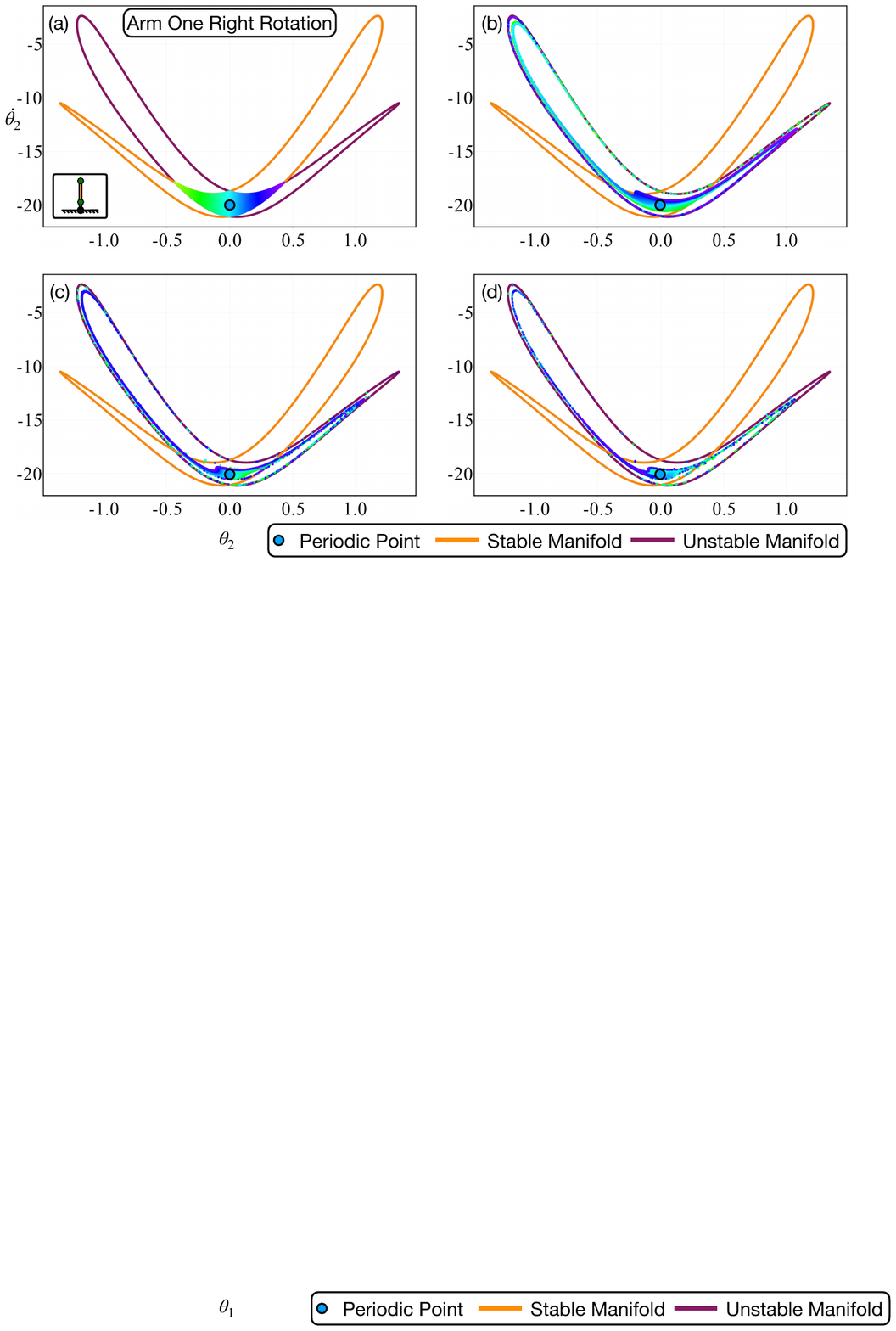}
    \caption{The apparent fractal structure generated by those trajectories that remain inside of the unstable tube of a UPO near the Up-Down equilibrium with energy $\mathcal{H} = 0.2$. As in Figure~\ref{fig:EDP_L1_ColorWheel_LowEng} we flow numerous initial conditions shown in (a) forward in time until they intersect the Poincar\'e section for a (b) first, (c) second, and (d) third time. The location of a symmetric periodic orbit that remains inside both tubes for all times is also plotted.}
    \label{fig:EDP_L2_ColorWheel}
\end{figure}

As with the tubes associated to UPOs near the Down-Up equilibrium, we find a fractal/horseshoe structure of trajectories that remain inside the tubes associated to UPOs near the Up-Down equilibrium for all forward and backward time. This structure is illustrated in Figure~\ref{fig:EDP_L2_ColorWheel} for energy $\mathcal{H} = 0.2$, which is analogous to Figure~\ref{fig:EDP_L1_ColorWheel_LowEng} presented previously. We again exploit this complex structure to identify an exemplary symmetric periodic orbit that represents the first arm making a full $2\pi$ rotation in physical space. These symmetric orbits intersect the Poincar\'e section at $\theta_2 = 0$ and again we find this point of intersection lies almost perfectly in the center of the intersections of two symmetric homoclinic orbits.

\subsection{Heteroclinic Transitions} \label{sec:Heteroclinic}

The approach to identifying heteroclinic connections between neighborhoods of the Down-Up and Up-Down equilibria is similar to that outlined above for identifying homoclinic trajectories. We fix the energy and flow the unstable manifold of the a UPO near the Down-Up equilibrium forward in time until it reaches the Poincar\'e section $\theta_1 = \theta_2$ in phase space. We then flow the stable manifold of a UPO with the same energy near the Up-Down equilibrium backwards in time until it also reaches the Poincar\'e section $\theta_1 = \theta_2$. Intersections of these stable and unstable manifolds in the Poincar\'e section therefore represent the desired heteroclinic orbits that transport one through phase space between neighborhoods of index-1 saddles. The process we have just described allows one to obtain orbits from near the Down-Up equilibrium to near the Up-Down state, although applying the reverser~\eqref{Reverser} reverses the direction of these orbits and therefore gives orbits that originate near the Up-Down equilibrium and move to a neighborhood of the Down-Up equilibrium as well.

\begin{figure}[t]
    \centering
    \includegraphics[width=0.95\textwidth]{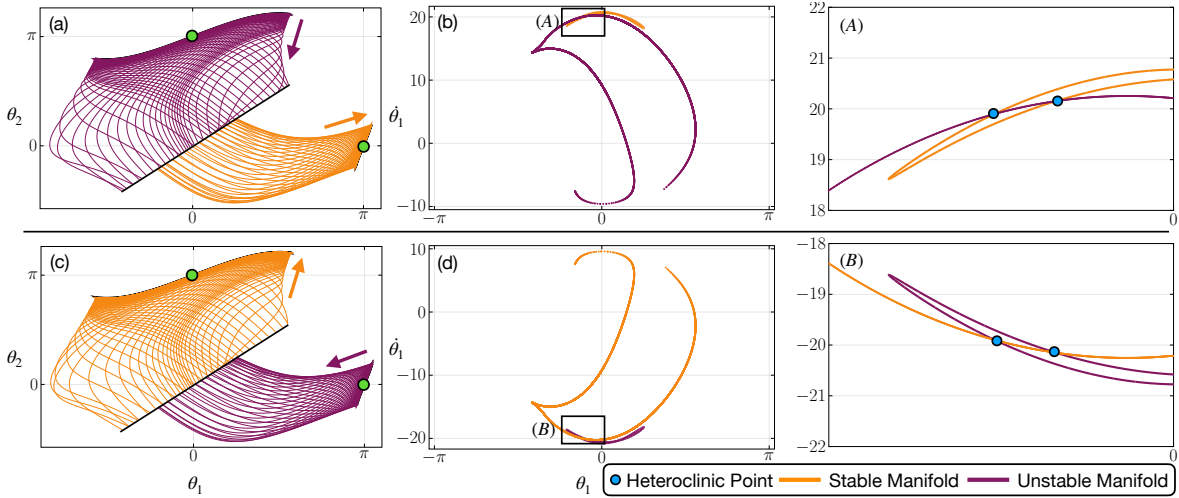}
    \caption{Heteroclinic connections between UPOs in the neighborhoods of the Down-Up and Up-Down equilibria are found by inspecting where their tubes meet the Poincar\'e section $\theta_1 = \theta_2$. (a) and (c) show the tubes flowing towards the section in forward and backward time for  $\mathcal{H}=0.2$. Panels (b) and (d) demonstrate the intersection of the tubes with the Poincar\'e section and panels (A) and (B) provide a zoom of the intersection of these tubes, representing heteroclinic trajectories.}
    \label{fig:EDP_HeteroclinicPoints}
\end{figure}

Our results are illustrated in Figure~\ref{fig:EDP_HeteroclinicPoints}. Panel (a) demonstrates the unstable manifold of a UPO near the Down-Up equilibrium moving toward the Poincar\'e section, while panel (c) demonstrates the time reversed flow of the stable manifold of the same UPO moving backward in time toward the Poincar\'e section. Panels (b) and (d) show the intersection of the UPO tubes with the Poincar\'e section for panels (a) and (c), respectively. We further present a zoom-in of the Poincar\'e section near the intersections, illustrating two distinct heteroclinic connections. These heteroclinic orbits are plotted in $(\theta_1,\theta_2,\dot\theta_1)$ space in Figure~\ref{fig:EDP_HeteroclinicOrbits3D} with their asymptotic UPOs shown in solid black. Our numerical experiments have shown that such heteroclinic connections exist for $\mathcal{H} \geq 0.1754$. Like the homoclinic orbits of the previous subsections, the existence of these heteroclinic orbits demonstrates a transport mechanism that allows one to move between neighbourhoods of the index-1 saddles in phase space. 

\begin{figure}[t]
    \centering
    \includegraphics[width=0.65\textwidth]{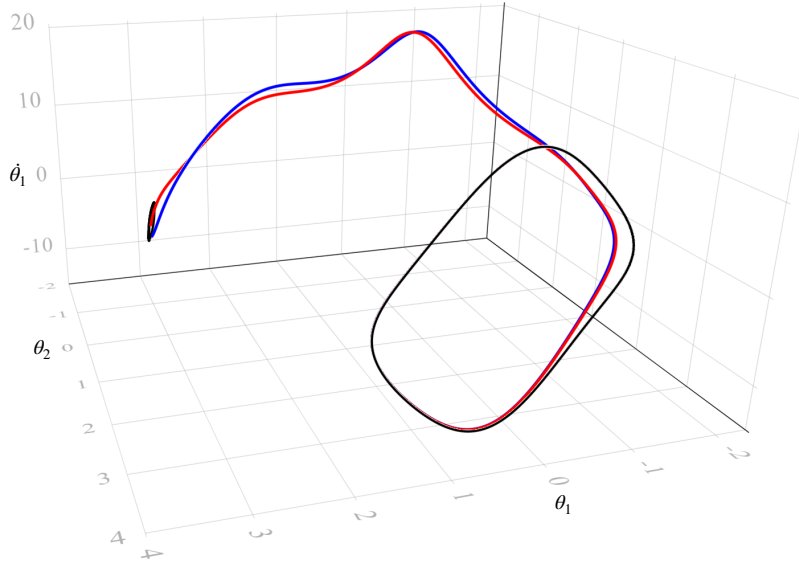}
    \caption{A visualization in $(\theta_1,\theta_2,\dot\theta_1)$ space of the two distinct heteroclinic orbits (red and blue) found as intersections in the Poincar\'e section $\theta_1 = \theta_2$ in Figure~\ref{fig:EDP_HeteroclinicPoints}. The black closed loops represent the asymptotic UPOs near each of the index-1 saddles.}
    \label{fig:EDP_HeteroclinicOrbits3D}
\end{figure}

\section{Existence of Long Periodic and Connecting Orbits}\label{sec:Theory}

Here we seek to leverage the numerical findings of the previous section to establish the existence of periodic and homo-/heteroclinic orbits of the double pendulum that transit over vast regions of phase space. Our main result, Theorem~\ref{thm:Main}, in~\S\ref{sec:Theorem} provides this existence for completely general four dimensional Hamilton systems. In~\S\ref{subsec:DPapplication} we will apply our result to the double pendulum, thus extending the numerical results of the previous section. The generality of this result means that it is applicable to a wide variety of Hamiltonian systems with two degrees of freedom. In fact, it provides similar results to the itinerary construction for the PCR3BP in~\cite{Koon} and we briefly comment on how our results extend the known structure of phase space for the PCR3BP in~\S\ref{subsec:TBPapplication}. Further, our result is general enough to apply to similar studies where homoclinic and heteroclinic orbits near index-1 saddles are known to exist without needing to know the specific structure of the system. Therefore, these results may be applied to chemical reaction models~\cite{Chemical,Chemical2,Chemical3}, models for ship motion~\cite{Ship1,Ship2,Naik}, snap-through buckling of shallow arches~\cite{Arch}, and the motion of a ball rolling on a saddle surface~\cite{Ball}.

\subsection{Abstract Setting and Results}\label{sec:Theorem}

Let us consider a general ODE 
\begin{equation}\label{ODE}
	\dot{u} = F(u),
\end{equation}
with $u \in \R^4$ and a smooth function $F:\R^4 \to \R^4$. To mimic the setting of the double pendulum, we first assume the existence of a conserved quantity, corresponding to a Hamiltonian of the system~\eqref{ODE}.

\begin{hyp}\label{hyp:Ham} 
There exists a smooth function $\mathcal{H}:\R^4\to\R$ such that $\langle F(u),\nabla \mathcal{H}(u) \rangle = 0$ for all $u \in \R^4$.
\end{hyp}

The existence of a conserved quantity allows us to reduce ourselves to its (generically) three-dimensional level sets. Next we assume that there exists a collection of UPOs that reside in the same level set of $\mathcal{H}$.

\begin{hyp}\label{hyp:Periodic} 
There exists $p \geq 1$ such that~\eqref{ODE} has periodic orbits $\gamma_1(t), \gamma_2(t), \dots, \gamma_p(t)$ satisfying 
\begin{equation}
	\mathcal{H}(\gamma_i(t)) = \mathcal{H}(\gamma_j(t))
\end{equation}
for all $1 \leq i,j \leq p$ and all $t \in \R$. Moreover, each $\gamma_j$ is hyperbolic and so has precisely two Floquet multipliers at one and no others on the unit circle.
\end{hyp}

Note that each periodic orbits has exactly two Floquet multipliers at one under Hypothesis~\ref{hyp:Ham}. Indeed, one multiplier comes from the Floquet exponent at zero related to translations around the periodic orbit, while the other comes from the fact that the conserved quantity reduces the flow of system~\eqref{ODE} to its level sets. Furthermore, the Hamiltonian structure forces the symmetry relationship that if $\lambda \in \mathbb{C}$ is a Floquet multiplier of some periodic orbit, then so is $\lambda^{-1},\bar \lambda,$ and $\bar \lambda^{-1}$. Therefore, we find that the two remaining Floquet multipliers that are not on the unit circle are real, taking the form $\lambda,\lambda^{-1} \in \mathbb{R}\setminus\{0\}$.  

From the assumption that each $\gamma_i(t)$ is hyperbolic, each periodic orbit has a stable and unstable manifold associated to it. As was discussed in Section~\ref{sec:Background}, these stable and unstable manifolds are diffeomorphic to cylinders inside of the level set $\mathcal{H}(\gamma_i(t))$. We denote these stable and unstable manifolds for each periodic orbit $\gamma_i$ by $W^s(\gamma_i)$ and $W^u(\gamma_i)$, respectively. Then we define 
\begin{equation}
	\W := \bigcup_{1 \leq i,j\leq p} W^s(\gamma_i) \cap W^u(\gamma_j)
\end{equation}
to be the set of all homoclinic and heteroclinic trajectories of~\eqref{ODE}. Of course, for a given system it is nearly impossible to know the full structure of the set $\W$, but our main result shows that we can use a finite collection of elements in $\W$ to construct infinitely many more. The following hypothesis makes our assumptions precise.

\begin{hyp}\label{hyp:HomHet} 
The set $\W$ is nonempty and the subset $\W_0 \subset \W$ is comprised of finitely many transverse homoclinic and heteroclinic orbits of~\eqref{ODE} belonging to $\W$.
\end{hyp}

The set $\W_0$ comprises the set of `base' homoclinic and heteroclinic orbits of~\eqref{ODE}. The assumption that these trajectories are transverse means that they lie along a transverse intersection of stable and unstable manifolds and is necessary to providing the results of this manuscript.

Using Hypotheses~\ref{hyp:Periodic} and \ref{hyp:HomHet} we will define a directed graph, written $\mathcal{G} = (\mathcal{V},\mathcal{E})$ as a collection of vertices $\mathcal{V}$ and edges $\mathcal{E}$. The vertices, $\mathcal{V}$, will be labelled as $\{1,\dots,p\}$ meant to represent each of the periodic orbits $\gamma_1(t),\dots,\gamma_p(t)$. The edges of the graph lie in one-to-one correspondence with the finitely many unique elements of $\W_0$. That is, the element $h(t) \in \W_0$ defines a directed edge on the graph $\mathcal{G}$ from vertex $i \in\mathcal{V}$ to $j\in\mathcal{V}$ if, and only if, $h(t) \in W^u(\gamma_i)$ and $h(t) \in W^s(\gamma_j)$ for all $t \in \mathbb{R}$. Informally, an edge connection from vertex $i$ to vertex $j$ exists if, and only if, there exists an orbit in $\W_0$ that goes asymptotically from $\gamma_i(t)$ to $\gamma_j(t)$. 

We note that our definition of the graph not only contains directed edges, but also allows for the possibility of loops (an edge that goes from one vertex back to itself) and multiple edges between two vertices. The loops come from the potential presence of homoclinic orbits in $\W_0$, while we also allow for the presence of multiple homoclinic and heteroclinic orbits to and from the same pair of periodic orbits, thus giving multiple edges between the same two vertices. The reader is referred to Figure~\ref{fig:Graphs} below for examples of these graphs coming from our numerical results in Section~\ref{sec:TubesDP}. We will denote a walk of length $k \geq 1$ on the directed graph $\mathcal{G}$ by the tuple of edges $(E_1,E_2,\dots,E_k) \in \mathcal{E}^k$. Such a walk represents starting at vertex $V_1 \in \mathcal{V}$ and moving along edge $E_1$ to vertex $V_2$, then moving along edge $E_2$ to vertex $V_3$, and continuing until finally moving along edge $E_k$ to finish the walk on $V_{k+1}$. This leads to our main result whose proof is left to Section~\ref{sec:Proofs}.

\begin{theorem}\label{thm:Main} 
Assume Hypotheses~\ref{hyp:Ham}-\ref{hyp:HomHet}. For any integer $k \geq 1$ and a walk $(E_1,E_2,\dots,E_k)$ on $\mathcal{G}$ moving through the sequence of vertices $(V_1,V_2,\dots,V_{k+1})$, there exists $M_k \geq 1$ such that the following is true:
\begin{enumerate}
	\item For each set of integers $N_2,\dots,N_k \geq M_k$ there exists a trajectory of~\eqref{ODE} belonging to $W^u(\gamma_{V_1})\cap W^s(\gamma_{V_{k+1}})$. 
	\item If $V_{k+1} = V_1$, for each set of integers $N_1,N_2,\dots,N_{k-1},N_k \geq M_k$ there exists a periodic trajectory of~\eqref{ODE}.   
\end{enumerate}
In both cases above the trajectory follows the walk on the graph, in that it leaves a neighborhood of $\gamma_{V_1}(t)$ following closely along the element of $\W_0$ corresponding to $E_1$, enters a neighborhood of $\gamma_{V_2}(t)$ and leaves, following closely to the element of $\W_0$ corresponding to $E_2$, repeating this process until finally entering a neighborhood of $\gamma_{V_{k+1}}(t)$ along the element of $\W_0$ corresponding to $E_k$. 
\end{theorem}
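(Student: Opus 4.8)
The plan is to reduce the existence of long connecting and periodic orbits to a shadowing argument built out of two local ingredients: (i) a normal-form/straightening description of the flow near each hyperbolic periodic orbit $\gamma_j$, and (ii) the transversality of the base orbits in $\W_0$, which lets us splice together the local pieces across the graph $\mathcal{G}$. Concretely, I would first fix, for each vertex $j$, a small solid torus neighborhood $U_j$ of $\gamma_j$ inside the level set $\{\mathcal{H} = c\}$ and, using Hypothesis~\ref{hyp:Periodic} together with the Hamiltonian symmetry of the Floquet spectrum (so the off-unit-circle multipliers are exactly $\lambda_j,\lambda_j^{-1}\in\mathbb{R}\setminus\{0\}$), introduce coordinates in which the return map to a Poincar\'e section $\Sigma_j\subset U_j$ is $C^1$-conjugate to a linear saddle $(\xi,\eta)\mapsto(\lambda_j\xi,\lambda_j^{-1}\eta)$ times the angular coordinate along $\gamma_j$; here $W^u_{\rm loc}(\gamma_j)=\{\eta=0\}$ and $W^s_{\rm loc}(\gamma_j)=\{\xi=0\}$. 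The key quantitative statement I need is the standard ``$\lambda$-lemma / inclination lemma'' estimate for the local passage: a trajectory that enters $\Sigma_j$ near $W^s_{\rm loc}$ with unstable coordinate $\xi_0$ and makes $N$ returns before leaving exits near $W^u_{\rm loc}$ with stable coordinate of size $O(\lambda_j^{-N})$, and the image under these $N$ iterates of a disk transverse to $W^s_{\rm loc}$ converges $C^1$, as $N\to\infty$, to a disk lying in $W^u_{\rm loc}(\gamma_j)$. This is where the integer $N_i$ of the theorem enters: $N_i$ counts the number of local excursions (returns to $\Sigma_{V_i}$) spent near $\gamma_{V_i}$.

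Next I would set up the global transition maps. For each edge $E_m$ of the walk, corresponding to a transverse orbit $h_m\in\W_0$ going from $\gamma_{V_m}$ to $\gamma_{V_{m+1}}$, the orbit $h_m$ defines a transition map $\Phi_m$ from a neighborhood of a point $h_m\cap W^u_{\rm loc}(\gamma_{V_m})$ on $\Sigma_{V_m}$ to a neighborhood of a point $h_m\cap W^s_{\rm loc}(\gamma_{V_{m+1}})$ on $\Sigma_{V_{m+1}}$; transversality of $h_m$ (Hypothesis~\ref{hyp:HomHet}) means precisely that $\Phi_m$ maps the trace of $W^u_{\rm loc}(\gamma_{V_m})$ to a curve transverse to the trace of $W^s_{\rm loc}(\gamma_{V_{m+1}})$ inside the $2$-dimensional section. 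Composing alternately the local power maps $L_j^{N_i}$ and the global transitions $\Phi_m$ yields, for each choice of the multi-index $(N_2,\dots,N_k)$ (and $N_1$ in the periodic case), a map whose fixed points / intersection points encode the desired orbit. I would then carry out a graph-transform (Hadamard–Perron) argument: start with the disk $D\subset\Sigma_{V_1}$ equal to the trace of $W^u_{\rm loc}(\gamma_{V_1})$, push it forward through $\Phi_1$, then apply $L_{V_2}^{N_2}$ — which, by the inclination lemma, contracts it $C^1$-close to $W^u_{\rm loc}(\gamma_{V_2})$ provided $N_2\ge M_k$ — then $\Phi_2$, then $L_{V_3}^{N_3}$, and so on. Transversality at each stage guarantees the pushed-forward disk meets $W^s_{\rm loc}$ of the next orbit cleanly, so the composition is well-defined and the images stay in the chosen neighborhoods; after the last step the image disk lies $C^1$-close to $W^u_{\rm loc}(\gamma_{V_{k+1}})$ and hence intersects $W^s(\gamma_{V_{k+1}})$, producing a point of $W^u(\gamma_{V_1})\cap W^s(\gamma_{V_{k+1}})$ — this is part (i). For part (ii), when $V_{k+1}=V_1$, the same composition (now including a local block $L_{V_1}^{N_1}$) is a map of a disk in $\Sigma_{V_1}$ into itself that is a uniform contraction in the stable direction and an expansion in the unstable direction with the disk stretched across itself; a standard Ważewski / Brouwer-type fixed-point argument (or the contraction-mapping form of the graph transform) yields a fixed point, i.e. a periodic orbit following the prescribed itinerary. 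The uniform lower bound $M_k$ is obtained by choosing it large enough that every inclination-lemma estimate along the length-$k$ walk brings the relevant disk within the fixed $C^1$-tolerance dictated by the transversality angles of the finitely many base orbits in $\W_0$.

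The main obstacle I anticipate is handling the two ``neutral'' directions — the flow direction along each $\gamma_j$ and the direction transverse to the level set $\{\mathcal{H}=c\}$ — correctly and uniformly. One must check that the conserved quantity $\mathcal{H}$ genuinely removes the would-be center direction so that, restricted to the level set, each return map really is a hyperbolic (saddle) map with no neutral eigenvalue besides the trivial one along the orbit, and that the Poincar\'e sections and transition times can be chosen to depend continuously (and with uniform bounds) on the base point; the long local residence times $N_i$ must not cause the return-time function or the section coordinates to degenerate. A second, more technical point is making the $C^1$-convergence in the inclination lemma uniform across the composition of arbitrarily many — but boundedly many, namely $k$ — transition maps, which is why $M_k$ is allowed to depend on $k$. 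I expect the rest (the fixed-point step, the bookkeeping of neighborhoods) to be routine once the local saddle model and the uniform inclination-lemma estimate on the level set are in place, following the template of~\cite{BramLDS,BramIsola}.
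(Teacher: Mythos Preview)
Your proposal is correct but takes a genuinely different route from the paper's own proof. You argue geometrically via the $\lambda$-lemma/inclination lemma: push the disk $W^u_{\rm loc}(\gamma_{V_1})$ forward through alternating transition maps $\Phi_m$ and local powers $L^{N_i}$, using transversality and the inclination estimate at each stage to keep the image disk $C^1$-close to the successive local unstable manifolds, and finish with an intersection (part~(1)) or a horseshoe-type fixed-point argument (part~(2)). The paper instead follows the Shilnikov boundary-value-problem template of~\cite{BramLDS}: it invokes a discrete Shilnikov lemma (Lemma~\ref{lem:ShilSol}) which, for each $N_i$, parametrizes local orbit segments near $\tilde\gamma_i$ by their entry/exit data $(a^s_i,a^u_i)$ with uniform exponential bounds, then writes down a finite system of matching equations~\eqref{HomMatch}/\eqref{PerMatch} coupling these segments through the push-forward maps $\Pi_i$ and the manifold-defining functions $G^s_i,G^u_i$, and solves the whole system at once by a Newton--Kantorovich lemma (Lemma~\ref{lem:Roots}). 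What the paper's approach buys is an explicit, quantitative solution: the matching unknowns $(a^s_i,a^u_i)$ are shown to be $\mathcal{O}(\eta^{\min N_i})$, so one sees directly how close the constructed orbit is to the concatenation of base orbits, and the argument is packaged as a single root-finding step rather than an inductive disk-tracking. Your approach buys geometric transparency and avoids setting up the explicit linearized matching matrix $A_k$; it is the classical Palis-style argument and would work equally well here. Note that although you cite~\cite{BramLDS,BramIsola} as your template, the method you actually describe is \emph{not} the one in those references---the paper's proof is.
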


In the statement of Theorem~\ref{thm:Main} the integers $N_i$ roughly correspond to how many times the trajectory wraps around the periodic orbit $\gamma_{V_j}(t)$ while in its neighborhood. Since there is no upper bound on the number of wraps, the above theorem gives infinitely many trajectories that follow the prescribed walk on the graph. Furthermore, the lack of upper bound on the integers $N_i$ means that the trajectories described in Theorem~\ref{thm:Main} can be arbitrarily long. In this way, a trajectory can reside near one of the periodic orbits $\gamma_i(t)$ for an arbitrarily long time before jumping to another one. The reason we only have $(k-1)$ integers $N_i$ in the first case of homoclinic and heteroclinic trajectories is that going backwards or forwards in $t$, the trajectory asymptotically enters the neighbourhoods of $\gamma_{V_1}(t)$ and $\gamma_{V_{k+1}}(t)$, respectively, not to ever leave it again. In the second point of a periodic trajectory we have $k$ integers $N_i$ since the trajectory initially wraps around $\gamma_{V_1}(t)$, but since $V_{k+1} = V_1$, it follows that once the trajectory re-enters the neighborhood $\gamma_{V_1}(t)$ along the edge $E_k$, it completes one full period of the periodic orbit.    

Another useful application of Theorem~\ref{thm:Main} is that it can be applied iteratively. That is, one may use one of the homoclinic or heteroclinic connections from point (1) and add it into the set $\W_0$ to add another edge to the graph $\mathcal{G}$. The result is a new graph with more edges, to which we can then apply Theorem~\ref{thm:Main} again. This process can be continued in order to construct longer and more complex trajectories of the system~\eqref{ODE}. However, we emphasize that the most important aspect when applying these results is the original set $\W_0$, since all trajectories constructed using Theorem~\ref{thm:Main} can only `shadow' the base homoclinic and heteroclinic orbits in $\W_0$. 

\begin{rmk}
We remark that the results of Theorem~\ref{thm:Main} also hold for non-Hamiltonian systems since Hypothesis~\ref{hyp:Ham} only restricts the dimension of the phase space to the Hamiltonian level sets of~\eqref{ODE}. For non-Hamiltonian ODEs we can alternatively take~\eqref{ODE} to have a three-dimensional phase space and simply remove the assumption that the periodic orbits belong to the same energy level set in Hypothesis~\ref{hyp:Periodic}. Although such a relaxation is easily achieved, we have elected to include the Hamiltonian assumption on~\eqref{ODE} to emphasize the applicability of our results to the double pendulum, as well as other notable Hamiltonian systems such as the PCR3BP. 
\end{rmk}

\subsection{Application to the Double Pendulum}\label{subsec:DPapplication}

In this subsection we provide a number of applications of Theorem~\ref{thm:Main} to the double pendulum model~\eqref{eq:edp_ode}. From our work in Section~\ref{sec:EOM} we have that Hypothesis~\ref{hyp:Ham} is satisfied with Hamiltonian function~\eqref{eq:edp_hamiltonian}. In all examples we will take the periodic orbits in Hypothesis~\ref{hyp:Periodic} to be the UPOs near the index-1 saddles, thus satisfying the hyperbolicity assumptions as well. Unlike the work~\cite{Conley,McGehee} on the PCR3BP that proves the existence of homoclinic orbits to the Lyapunov orbits about the index-1 saddles, the double pendulum lacks such proofs and so we rely on our numerical work in Section~\ref{sec:TubesDP}. Thus, the set $\mathcal{W}_0$ is formed from collections of the numerically observed homoclinic and heteroclinic orbits detailed in Figures~\ref{fig:EDP_PoincareCut_L1_VariesEng}, \ref{fig:EDP_TubeStructure_L2}, and \ref{fig:EDP_HeteroclinicPoints}, with examples of the associated directed graphs given in Figure~\ref{fig:Graphs}. We will work exclusively with the periodic phase space that identifies $\theta_{1,2}$ with $\theta_{1,2} \pm 2\pi$ to ensure that our orbits are truly homoclinic. Beyond this numerical existence, we also lack a proof of transversality of the orbits, but again the figures appear to confirm these properties, as one can see they lie along transverse intersections between stable and unstable manifolds in the Poincar\'e section.

In what follows we will detail three separate applications of Theorem~\ref{thm:Main}. We begin with only the homoclinic orbits to the UPOs near the Down-Up saddle state. In this case we can use the base homoclinic orbits found in Section~\ref{sec:L1Hom} to create longer multi-homoclinic trajectories and periodic orbits. Although not explored here, these same results can be extended to the UPOs near the Up-Down saddle using the homoclinic orbits found in Section~\ref{sec:L2Hom}. Then, we use the heteroclinic orbits from Section~\ref{sec:Heteroclinic} to create new homoclinic and periodic orbits that bounce between neighborhoods of the index-1 saddles. Finally, we use all of our numerical work together to show that when the energy is large enough we can transit across vast regions of phase space by following all of the homoclinic and heteroclinic orbits found in Section~\ref{sec:TubesDP}.

\begin{figure}
    \centering
    \includegraphics[width=0.75\textwidth]{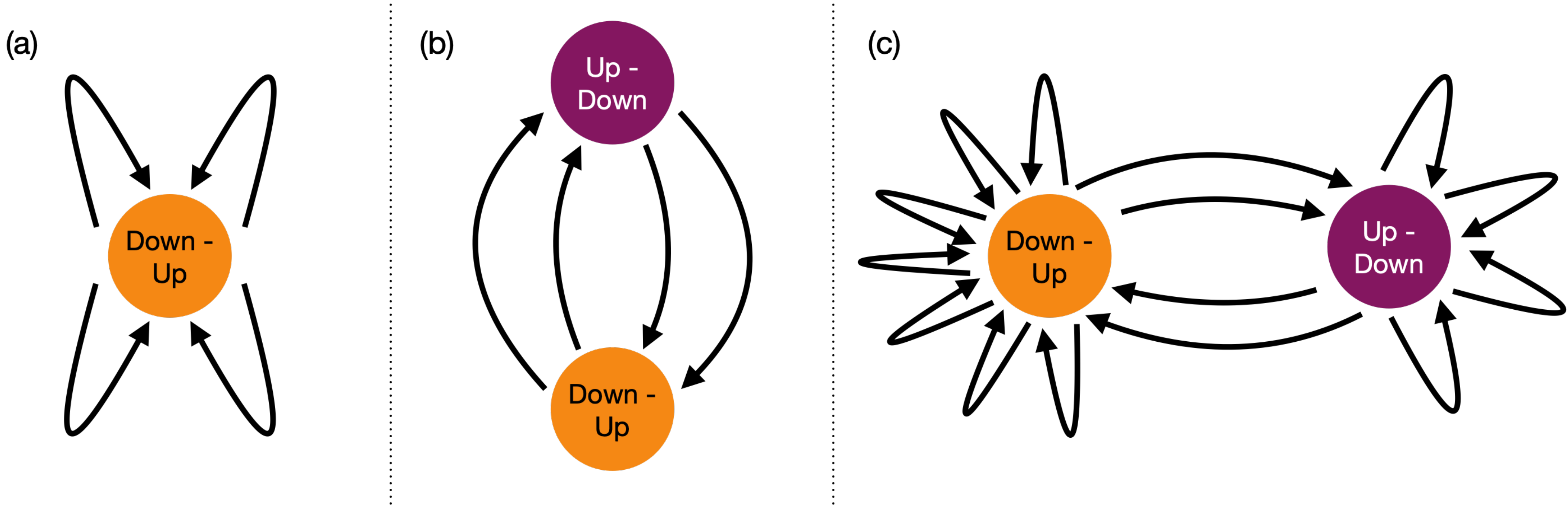}
    \vspace{-.1in}
    \caption{To apply Theorem~\ref{thm:Main} one uses the homoclinic and heteroclinic connections between UPOs to create a directed graph. (a) The graph whose single vertex corresponds to the UPO near the Down-Up equilibrium with $\mathcal{H} = -0.07$ for which Figure~\ref{fig:EDP_PoincareCut_L1_VariesEng} demonstrates the existence of four homoclinic orbits, resulting in four edges originating and terminating at the single vertex. (b) The heteroclinic orbits from Figure~\ref{fig:EDP_HeteroclinicPoints} and their reversed counterparts lead to a graph with two vertices corresponding to the UPOs near each of the index-1 saddles with $\mathcal{H} = 0.2$ and two pairs of directed edges from one edge to the other. (c) The resulting directed graph obtained by putting together the information from Figures~\ref{fig:EDP_TubeStructure_L1}, \ref{fig:EDP_TubeStructure_L2}, and \ref{fig:EDP_HeteroclinicPoints} with $\mathcal{H} = 0.2$.}
    \label{fig:Graphs}
\end{figure}

\subsubsection{Long Trajectories Near the Down-Up State}

An immediate corollary of Theorem~\ref{thm:Main} is that a single transverse homoclinic orbit to one of the periodic orbits $\gamma_i(t)$ gives rise to infinitely many more homoclinic and periodic orbits that `shadow' the original homoclinic orbit. In the language of the graph $\mathcal{G}$, we would have for any integer $k \geq 1$ a walk given by $E_1 = E_2 = \dots = E_k$ and $V_1 = V_2 = \dots = V_{k+1}$, where the edge $E_1$ is a loop on the vertex $V_1$. This result is a well-known consequence of having a transverse homoclinic orbit in three or more dimensional ODEs~\cite{Champneys,Haller,Homburg,Homburg2,Jens,2Pulse}.  Thus, Theorem~\ref{thm:Main} comes as a generalization of some of these results. In Figure~\ref{fig:MultiHom}(a) we present an illustration of a `base' homoclinic orbit that is asymptotic to a UPO near an index-1 saddle, along with cartoons of (b) a longer homoclinic orbit to the same UPO that shadows the base orbit twice ($k = 2$) and (c) a periodic orbit that shadows the base homoclinic once ($k = 1$), both of which are guaranteed by Theorem~\ref{thm:Main}. 

\begin{figure}[t]
    \centering
    \includegraphics[width=0.75\textwidth]{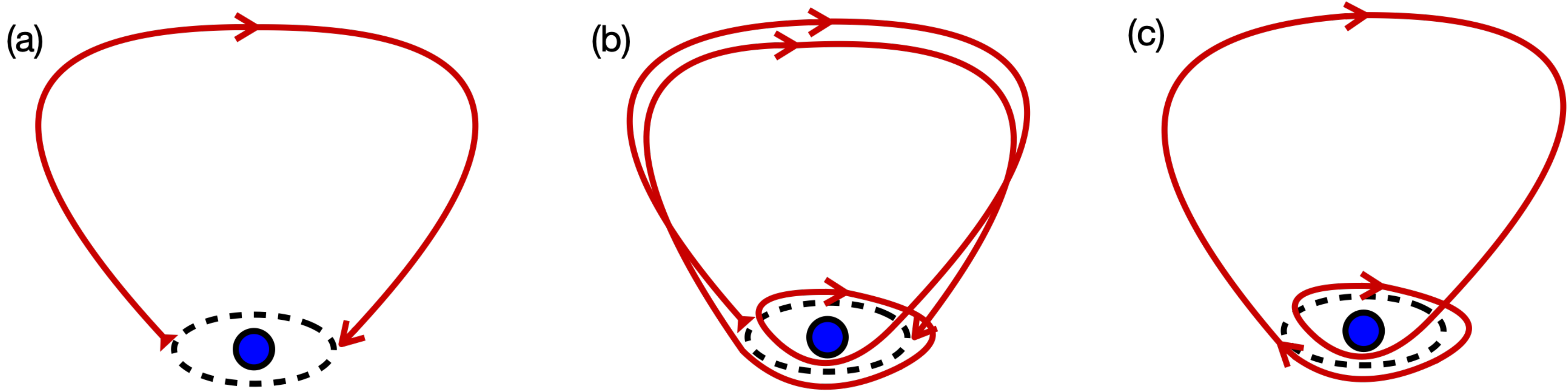}
    \caption{Theorem~\ref{thm:Main} gives the existence of multi-homoclinic and periodic solutions that shadow a single homoclinic orbit. (a) The `base' homoclinic orbit that is asymptotic to a single UPO near an index-1 saddle. (b) A double homoclinic orbit which shadows the base homoclinic twice with an intermediary transition wrapping around the UPO. (c) A periodic orbit that shadows the base homoclinic.}
    \label{fig:MultiHom}
\end{figure}

Beyond just a single homoclinic orbit emanating from a UPO near the Down-Up saddle, one may turn to our numerical findings in Figure~\ref{fig:EDP_PoincareCut_L1_VariesEng}, which demonstrate multiple homoclinic orbits. For example, inside the energy level set $\mathcal{H} = -0.07$ as in panel (b) of Figure~\ref{fig:EDP_PoincareCut_L1_VariesEng} we have four distinct homoclinic orbits. Thus, we can apply Theorem~\ref{thm:Main} with $\mathcal{W}_0$ comprised of these four homoclinic orbits. An illustration of the resulting directed graph is presented in Figure~\ref{fig:Graphs}(a). The result is an abundance of homoclinic and periodic trajectories formed by shadowing the elements of $\mathcal{W}_0$ for which each shadow causes the second pendulum arm to undergo a full rotation and the intermediary transition that switches between elements given by a long wobble of the second arm that shadows the UPO near the index-1 Down-Up equilibrium, as illustrated in Figure~\ref{fig:DP_UPO}. Finding these orbits is outlined in~\cite{Koon}, but we can also use similar numerical techniques from Section~\ref{sec:TubesDP} to identify longer homoclinic orbits. Figure~\ref{fig:L1DoubleHom}~(a) shows the result of continuing to flow the unstable manifold of the UPO near the Down-Up equilibrium forward to $\theta_2 = 4\pi$. Intersections in the Poincar\'e section now represent homoclinic orbits that have the second arm making two full rotations over the course of the trajectory. The curves are incomplete since many homoclinic orbits take a long time to reach the Poincar\'e section again, corresponding to large values of $N_i$ in Theorem~\ref{thm:Main}. One can see that the curve wraps around itself, appearing to give infinitely many intersections with the stable manifold near the location of the original homoclinic orbits from Figure~\ref{fig:EDP_PoincareCut_L1_VariesEng}(b). Figure~\ref{fig:L1DoubleHom}~(b) also includes the flow of the unstable manifold forward to $\theta_2 = 6\pi$, for which the resulting homoclinic orbits now have the second arm making three full rotations. Increasing the value of $\mathcal{H}$ results in more `base' trajectories with which to build $\mathcal{W}_0$, thus increasing the orbits to shadow for producing periodic and homoclinic trajectories. 

\begin{figure}[t]
    \centering
    \includegraphics[width=0.95\textwidth]{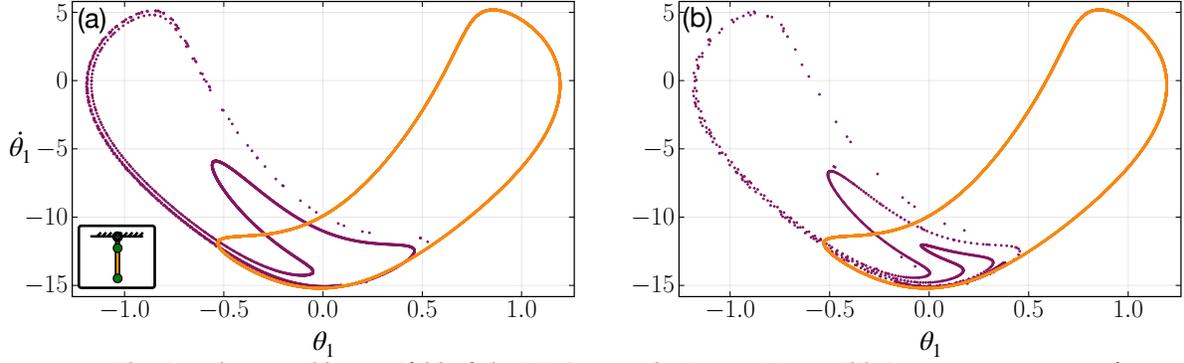}
    \vspace{-.15in}
    \caption{Flowing the unstable manifold of the UPO near the Down-Up equilibrium at $\mathcal{H} = -0.07$ forward to $\theta = 2\pi k$, $k \geq 2$, can numerically demonstrate the existence of longer homoclinic orbits to it. (a) Intersections in the Poincar\'e section of the unstable manifold flowed forward in time to $\theta_2 = 4\pi$ and the stable manifold flowed backward to $\theta_2 = 0$ gives the existence of doubly-homoclinic orbits. (b) Similarly, flowing the unstable manifold forward in time to $\theta_2 = 6\pi$ demonstrates the existence of triply-homoclinic orbits.}
    \label{fig:L1DoubleHom}
\end{figure}

\subsubsection{Transitions Between the Down-Up and Up-Down State}

Section~\ref{sec:Heteroclinic} numerically demonstrates heteroclinic orbits that transfer one from a UPO near the Down-Up equilibrium to a UPO near the Up-Down equilibrium. Furthermore, applying the reversible symmetry of the double pendulum to these orbits also gives the existence of heteroclinic orbits that transfer from a UPO near the Up-Down equilibrium to a UPO near the Down-Up equilibrium. Using this information we can take $\mathcal{W}_0$ to be these four heteroclinic orbits that go back and forth between UPOs near the Down-Up and Up-Down equilibria at $\mathcal{H} = -0.07$, resulting in the directed graph presented in Figure~\ref{fig:Graphs}(b). Applying Theorem~\ref{thm:Main} leads to the existence of homoclinic, heteroclinic, and periodic orbits that bounce between the neighborhoods of the index-1 saddles and can be understood by traversing the edges of the corresponding graph. Such orbits begin by wobbling around one of the index-1 saddles, mimicking the motion of the nearby UPO, and then shadow one of the heteroclinic orbits to move to a wobbling motion about the other index-1 saddle, according to its nearby UPO. All orbits given by Theorem~\ref{thm:Main} continue this process of wobbling, transferring, wobbling, transferring, and so on to either generate a homoclinic, heteroclinic, or periodic orbit of the system~\eqref{eq:edp_ode}. What differentiates these orbits is where they start and end, with homoclinics starting and ending with wobbles about the same index-1 saddle, heteroclinics starting and ending with wobbles around different saddles, and periodic orbits repeating the wobble, transfer, wobble process ad infinitum.   

\begin{figure}[t]
    \centering
    \includegraphics[width=.95\textwidth]{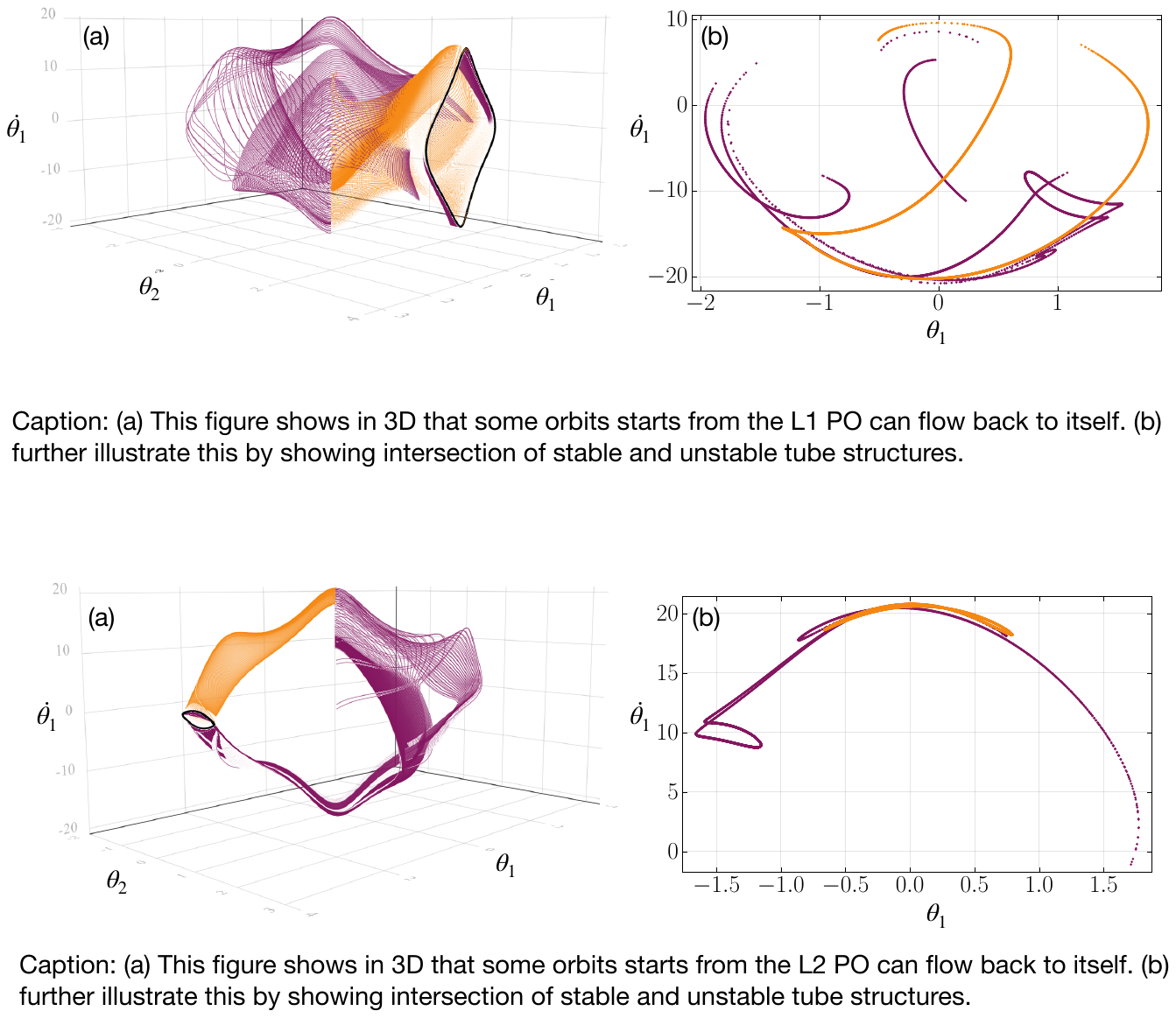}
    \vspace{-.15in}
    \caption{Continuing to flow the unstable manifold of the UPO near the Down-Up equilibrium (black) forward in time reveals the existence of new homoclinic orbits to it. (a) The manifolds in $(\theta_1,\theta_2,\dot\theta_1)$-space. (b) The intersection of the stable and unstable manifolds in the Poincar\'e section $\theta_1 = \theta_2$, but now with the unstable manifold crossing in the other direction. Intersections of the curves in the Poincar\'e section provide the existence of homoclinic orbits.}
    \label{fig:HetApplication}
\end{figure}

Similar to the previous section, we can flow the manifolds of the UPO for a longer time to identify more complex homoclinic and heteroclinic orbits guaranteed by Theorem~\ref{thm:Main}. For example, we have flowed the unstable manifold of the UPO near the Down-Up equilibrium forward in time until we meet the Poincar\'e section $\theta_1 = \theta_2$ again, but this time crossing in the negative direction. In Figure~\ref{fig:HetApplication} we provide this new intersection with the Poincar\'e section, along with the intersection of the stable manifold from~\S\ref{sec:Heteroclinic}. Intersections of these curves represent homoclinic orbits to the UPO near the Down-Up equilibrium that are entirely distinct from those found in~\S\ref{sec:L1Hom}. The curves are not connected in the Poincar\'e section, partially attributed to the extremely long time to flow parts of the manifold back to the Poincar\'e section and partially due to the fact that a portion of the unstable manifold does not turn around and come back to the Poincar\'e section from the other side. We have performed the same calculation for the UPO near the Up-Down equilibrium but the results are not included for brevity.

\subsubsection{Full State-Space Dynamics}

Our final application of Theorem~\ref{thm:Main} unites all of our numerical findings from Section~\ref{sec:TubesDP}. Notice that Figures~\ref{fig:EDP_PoincareCut_L1_VariesEng}, \ref{fig:EDP_TubeStructure_L2}, and \ref{fig:EDP_HeteroclinicPoints} all provide the existence of homoclinic and heteroclinic orbits for $\mathcal{H} = 0.2$. Thus, when the energy $\mathcal{H}$ is sufficiently large, it is possible to transit around the homoclinic orbits to the UPOs near the index-1 saddles and between these UPOs along the heteroclinic orbits. Taking $\mathcal{W}_0$ to be the set of all such homoclinic and heteroclinic orbits found numerically for $\mathcal{H} = 0.2$, we can generate the directed graph presented in Figure~\ref{fig:Graphs}(c) and apply Theorem~\ref{thm:Main}. This gives the existence of long trajectories in phase space that temporarily shadow the base orbits in $\mathcal{W}_0$ to transit back and forth between neighborhoods of the index-1 saddles. 

Precisely, one is able to determine the existence of orbits that perform the acrobatic motion of one arm making a full rotation by following one of the homoclinic orbits in Sections~\ref{sec:L1Hom} and \ref{sec:L2Hom} and transferring between these motions by shadowing the transitory heteroclinic orbits of Section~\ref{sec:Heteroclinic}. Much like the previous applications, Theorem~\ref{thm:Main} allows one to stitch these motions together to form a long, complex orbit of the double pendulum that follows such an itinerary. In fact, Theorem~\ref{thm:Main} gives infinitely many such orbits that follow a given itinerary since one can choose the integers $N_i$ that approximately prescribe how long the intermediary wobbling motion will take place before performing an arm swing or a transition between saddle neighborhoods. Therefore, much like the Interplanetary Transport Network of the solar system, coming from similar results in the PCR3BP and related models, the double pendulum has a similar transport network given by similar tube dynamics.    

\subsection{Application to the PCR3BP}\label{subsec:TBPapplication}

Let us briefly comment on the application of our results to the PCR3BP. In~\cite{Koon} the authors undertook a similar numerical investigation of the Lyapunov orbits about the $L_1$ and $L_2$ Lagrange points of the PCR3BP. Their numerical findings were summarized in Figures~\ref{fig:TBP_HomoclinicOrbit} and \ref{fig:TBP_HeteroclinicOrbit}. These findings were then leveraged to provide analytical results that detail the existence of long trajectories that follow these homoclinic and heteroclinic orbits according to a prescribed itinerary. The results of Theorem~\ref{thm:Main} can similarly be applied to the PCR3BP in the same way we did for the double pendulum in the previous subsection, and in many instances our results herein provide more details on the itineraries than those in~\cite{Koon}. We summarize our extensions as follows:
\begin{enumerate}
	\item Theorem~\ref{thm:Main} demonstrates exactly how trajectories follow a given itinerary by shadowing the homoclinic and heteroclinic orbits in $\mathcal{W}_0$.
	\item The presence of multiple homoclinic and heteroclinic orbits provides different tracks for the itineraries to follow as they leave neighborhoods of the Lagrange points. 
	\item The periodic orbits $\gamma_i(t)$ need not be Lyapunov orbits, and so Theorem~\ref{thm:Main} can be applied to establish transit between any hyperbolic periodic orbits in the PCR3BP for which a transverse heteroclinic connection is known to exist.
\end{enumerate}
In the context of the PCR3BP we have that the values of the integers $N_i$ in Theorem~\ref{thm:Main} approximately correspond to how long the trajectory will orbit around a Lagrange point in the PCR3BP by shadowing its Lyapunov orbit. The larger the value of $N_i$, the longer the trajectory orbits a Lagrange point. Thus, one may use Theorem~\ref{thm:Main} to not only follow a given itinerary, but also to rest for arbitrarily long periods of time near the Lagrange points. This was similarly established in \cite[Theorem~4.2]{Koon} with the values $r_i$, although by different methods to the proofs in the following section.

\section{Proof of Theorem~\ref{thm:Main}}\label{sec:Proofs} 

Throughout this section we will consider $k \geq 1$ and the sequences 
\begin{equation}
	(E_1,E_2,\dots,E_k) \in \mathcal{E}^k, \quad (V_1,V_2,\dots,V_{k+1})\in\mathcal{V}^{k+1}
\end{equation} 
as they are given in the statement of Theorem~\ref{thm:Main}. In an effort to simplify the notation of this section we will define 
\begin{equation}\label{tildegamma}
	\tilde{\gamma}_i(t) = \gamma_{V_i}(t)
\end{equation} 
for all $i = 1,2,\dots,k+1$. This gives a new sequence of periodic orbits $\{\tilde\gamma_1(t),\tilde\gamma_2(t),\dots,\tilde\gamma_{k+1}(t)\}$ which corresponds to the sequence of vertices traversed in the walk on the graph $\mathcal{G}$. We note that depending on the choices of $E_i$ and $V_i$, it may be the case that $\tilde{\gamma}_i(t) = \tilde{\gamma}_j(t)$ for some $i \neq j$, which will necessarily be true when $k > p$. The effect is that we may now define a graph $\mathcal{G}' = (\mathcal{V}',\mathcal{E}')$ for which the vertex set $\mathcal{V}' = \{1,2,\dots,k+1\}$ is meant to represent each of the periodic orbits $\tilde\gamma_1(t),\tilde\gamma_2(t),\dots,\tilde\gamma_{k+1}(t)$. The edge set $\mathcal{E}'$ is solely comprised of the edges that make up the walk, thus giving that we have a directed edge from vertex $i$ to vertex $i+1$ for all $1 \leq i \leq k$. Therefore, $\mathcal{G}'$ is a directed chain meant to represent transits from $\gamma_{V_i}(t)$ to $\gamma_{V_{i+1}}(t)$ dictated by the walk $(E_1,E_2,\dots,E_k)$ on $\mathcal{G}$. In the following subsections we will work exclusively with the $\tilde\gamma_i(t)$ periodic orbits and the graph $\mathcal{G}'$ to simplify notation.

The proof of Theorem~\ref{thm:Main} is broken down over the following subsections. We begin in~\S\ref{subsec:Local} with an understanding of the dynamics of the differential equation~\eqref{ODE} in a neighborhood of each $\tilde\gamma_i(t)$. To achieve this understanding we define an appropriate Poincar\'e section and consider iterates through this section governed by a Poincar\'e mapping. When we restrict the domain of the Poincar\'e mappings close enough to the intersection with the periodic orbits we have that each iterate of the map roughly represents a trajectory of the continuous-time system~\eqref{ODE} that wraps around the periodic orbit once. Upon establishing these local results, we then move to~\S\ref{subsec:Transfer} where we define maps that transport iterates through phase space from one local description to another. Then~\S\ref{subsec:HomHet} and~\S\ref{subsec:Periodic} set up appropriate matching conditions which, when satisfied, construct the trajectories outlined in point (1) and point (2), respectively, of Theorem~\ref{thm:Main}.

\subsection{Dynamics Near the Periodic Orbits}\label{subsec:Local}

In this subsection we will work to understand the dynamics of system~\eqref{ODE} in neighbourhoods of the periodic orbits $\tilde\gamma_1(t), \dots, \tilde\gamma_{k+1}(t)$. The results apply to all $\tilde\gamma_i(t)$ with $1 \leq i \leq k+1$, and so whenever possible we will define global constants that apply in all neighbourhoods of the periodic orbits. Such global constants can always be found since they can be defined as either the maximum or the minimum, depending on the context, of the individual constants needed for each $\tilde\gamma_i(t)$. We also emphasize that all results moving forward can be obtained independently of the value of $k$. This comes from the definition of the $\tilde\gamma_i(t)$ in~\eqref{tildegamma} and the fact that there are only $p \geq 1$ periodic orbits, meaning that we need only understand the dynamics near each of the $p\geq 1$ periodic orbits and carry them over to the redundant $\tilde\gamma_i(t) = \tilde\gamma_j(t)$ with $i \neq j$. As discussed at the beginning of this section, we will continue to use the $\tilde\gamma_i(t)$ notation for convenience, but the reader should keep in mind that there are only finitely many unique choices for our periodic orbits.  

To begin, Hypothesis~\ref{hyp:Ham} implies that we may reduce the full four-dimensional phase space of~\eqref{ODE} to the three-dimensional level set of $\mathcal{H}$ that contains the periodic orbits $\tilde\gamma_i(t)$. Inside of this three-dimensional phase space we can define $\Sigma_i$ to be a Poincar\'e section comprised of the plane orthogonal to $\gamma_i(0)$ restricted to a small neighborhood of this point. Let us define the local Poincar\'e mapping, denoted $P_i$, by 
\begin{equation}\label{Psec}
	P_i: \Sigma'_i \to \Sigma_i,
\end{equation} 
where the domain $\Sigma'_i \subset \Sigma_i$ is such that $P(\Sigma'_i) \subset \Sigma_i$, making the mapping well-defined. Note that $P_i(\tilde\gamma_i(0)) = \tilde\gamma_i(0)$, and from Hypothesis~\ref{hyp:Periodic} the linearization $DP(\tilde\gamma_i(0))$ has two nonzero eigenvalues $\lambda^{-1}_i,\lambda_i \in \mathbb{R}$, corresponding to the nontrivial Floquet multipliers of $\tilde\gamma_i(t)$. Upon potentially relabelling $\lambda_i$ and $\lambda_i^{-1}$, we will assume $0 < |\lambda_i^{-1}| < 1 < |\lambda_i|$, making $\lambda_i$ the unstable eigenvalue and $\lambda_i^{-1}$ the stable eigenvalue. This leads to our first result which details a change of variables to locally straighten the stable and unstable manifolds of the fixed point $\tilde\gamma_i(0)$ in a small neighborhood. 

\begin{lem}\label{lem:Shil} 
Assume Hypotheses~\ref{hyp:Ham} and \ref{hyp:Periodic} are met. Then there exists $\delta > 0$ such that for each ${i \in \{1,\dots,k+1\}}$ there is a smooth change of coordinates mapping $u\in\Sigma'_i$ to $v_i = (v^s_i,v^u_i)$ near the fixed point $u = \tilde\gamma_i(0)$, and smooth functions $f^s_{i,j},f^u_{i,j}:\mathcal{I}\times\mathcal{I}\to \mathbb{R}$, $j = 1,2$, so that (\ref{Psec}) is of the form 
	\begin{equation}\label{Shil}
		\begin{split}
			v^s_{i,n+1} &= [\lambda_i^{-1} + f_{i,1}^s(v^s_{i,n},v^u_{i,n})v^s_{i,n} + f_{i,2}^s(v^s_{i,n},v^u_{i,n})v^u_{i,n}]v^s_{i,n}, \\
			v^u_{i,n+1} &= [\lambda_i + f_{i,1}^u(v^s_{i,n},v^u_{i,n})v^s_{i,n} + f_{i,2}^u(v^s_{i,n},v^u_{i,n})v^u_{i,n}]v^u_{i,n}, \\
		\end{split}
	\end{equation}	
	where $v^s_{i,n},v^u_{i,n} \in \mathcal{I} := [-\delta,\delta]$.
\end{lem}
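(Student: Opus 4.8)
The plan is to bring (\ref{Psec}) to the form (\ref{Shil}) in three moves: straighten the fixed point's invariant manifolds, use their invariance to factor the components of the map, and then apply a division (Hadamard-type) lemma to peel off the linear coefficients. Throughout one works on the two-dimensional section $\Sigma_i$ inside the three-dimensional level set of $\mathcal{H}$, on which, as already recorded above, the induced fixed point $\tilde\gamma_i(0)$ of $P_i$ is hyperbolic: $DP_i(\tilde\gamma_i(0))$ has the two nontrivial Floquet multipliers $\lambda_i^{-1},\lambda_i$ as its eigenvalues, these are real with $|\lambda_i^{-1}| < 1 < |\lambda_i|$, so there is no center direction and $DP_i(\tilde\gamma_i(0))$ is diagonalizable over $\mathbb{R}$.

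First I would invoke the stable and unstable manifold theorem for the map $P_i$ at its hyperbolic fixed point: since $F$ is smooth, $P_i$ is a smooth local diffeomorphism near $\tilde\gamma_i(0)$, and one obtains smooth local stable and unstable curves $W^s_{\mathrm{loc}}$ and $W^u_{\mathrm{loc}}$ through $\tilde\gamma_i(0)$, tangent there to the two eigenlines. Being smooth and transverse at the fixed point, these curves can be used as coordinate curves for a local smooth diffeomorphism $u \mapsto v_i = (v^s_i,v^u_i)$ centered at $\tilde\gamma_i(0)$, carrying $W^s_{\mathrm{loc}}$ onto $\{v^u_i = 0\}$ and $W^u_{\mathrm{loc}}$ onto $\{v^s_i = 0\}$; after shrinking, its image contains a box $\mathcal{I}\times\mathcal{I}$ with $\mathcal{I} = [-\delta,\delta]$. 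In these coordinates $P_i$ reads $v^s_{i,n+1} = g^s(v^s_{i,n},v^u_{i,n})$, $v^u_{i,n+1} = g^u(v^s_{i,n},v^u_{i,n})$ with $g^s,g^u$ smooth, vanishing at the origin, with diagonal linearization $\mathrm{diag}(\lambda_i^{-1},\lambda_i)$; and invariance of the straightened manifolds forces $g^u(v^s,0)\equiv 0$ and $g^s(0,v^u)\equiv 0$ on the box.

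Next, since $g^s$ vanishes on $\{v^s = 0\}$, writing $g^s(v^s,v^u) = v^s\int_0^1 \partial_{v^s} g^s(t v^s,v^u)\,\mathrm{d}t =: v^s\, h^s(v^s,v^u)$ (the segment stays in the box by convexity) produces a smooth $h^s$, and evaluating the linearization at the origin gives $h^s(0,0) = \lambda_i^{-1}$. Applying the same division step to $h^s - \lambda_i^{-1}$, which vanishes at the origin, gives smooth $f^s_{i,1},f^s_{i,2}$ with $h^s(v^s,v^u) - \lambda_i^{-1} = f^s_{i,1}(v^s,v^u)v^s + f^s_{i,2}(v^s,v^u)v^u$; substituting yields the first line of (\ref{Shil}). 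The identical argument applied to $g^u$, using $g^u(v^s,0)\equiv 0$ and $\partial_{v^u} g^u(0,0) = \lambda_i$, gives $f^u_{i,1},f^u_{i,2}$ and the second line. Finally, because $\tilde\gamma_i = \gamma_{V_i}$ ranges over only the finitely many orbits $\gamma_1,\dots,\gamma_p$, one takes $\delta$ to be the minimum of the finitely many radii produced above, so a single $\delta$ serves all $i \in \{1,\dots,k+1\}$.

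I expect the one genuinely delicate ingredient to be securing smooth (not merely finitely differentiable) local invariant manifolds together with the simultaneous smooth straightening diffeomorphism; this is standard because $F$, hence $P_i$ and the section, is $C^\infty$, but it is precisely where the smoothness hypotheses are spent. Everything after that --- the two applications of the division lemma and the bookkeeping over a finite index set --- is routine.
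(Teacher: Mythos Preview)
Your proposal is correct and follows essentially the same route as the paper: invoke the stable/unstable manifold theorem for the hyperbolic fixed point of $P_i$, straighten the two invariant curves so that $W^s_{\mathrm{loc}}=\{v^u_i=0\}$ and $W^u_{\mathrm{loc}}=\{v^s_i=0\}$, then expand to obtain (\ref{Shil}), and finally take $\delta$ to be the minimum over the finitely many underlying orbits $\gamma_1,\dots,\gamma_p$. The only difference is presentational: the paper writes down the straightening diffeomorphism explicitly as $(v^s_i,v^u_i)\mapsto \tilde\gamma_i(0)+(v^s_i\xi^s_i+w^s_i(v^s_i)\xi^u_i)+(v^u_i\xi^u_i+w^u_i(v^u_i)\xi^s_i)$ using the graph functions $w^s_i,w^u_i$, and then simply asserts that the expansion yields (\ref{Shil}), whereas you leave the chart abstract but spell out the two Hadamard division steps that actually produce the factored form---so your write-up is, if anything, more complete on the algebraic side.
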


\begin{proof}
Let $\xi^s_i$ be the eigenvector of $DP_i(\tilde\gamma_i(0))$ associated to $\lambda_i^{-1}$ and $\xi^u_i$ be the eigenvector associated to $\lambda_i$. It follows from the stable manifold theorem for maps that there exists a $\delta_i > 0$ and smooth functions $w^s_i,w^u_i:[-\delta_i,\delta_i] \to \mathbb{R}$ with $w^s_i(0) = (w^s_i)'(0) = w^u_i(0) = (w^u_i)'(0) = 0$ such that the local stable and unstable manifolds of $\tilde\gamma_i(0)$ can be written 
\begin{equation}
	\begin{split}
		W^s_\mathrm{loc}(\tilde\gamma_i(0)) &= \{\tilde\gamma_i(0) + v^s_i\xi^s_i + w^s_i(v^s_i)\xi^u_i:\ v^s_i \in [-\delta_i,\delta_i]\}, \\
		W^u_\mathrm{loc}(\tilde\gamma_i(0)) &= \{\tilde\gamma_i(0) + v^u_i\xi^u_i + w^u_i(v^u_i)\xi^s_i:\ v^u_i \in [-\delta_i,\delta_i]\}.
	\end{split}
\end{equation}
Then, for $u\in\Sigma_i'$ in a small neighborhood of $\tilde\gamma_i(0)$ we may introduce the change of variable
\begin{equation}
	(v^s_i,v^u_i) \mapsto u = \tilde\gamma_i(0) + (v^s_i\xi^s_i + w^s_i(v^s_i)\xi^u_i) + (v^u_i\xi^u_i + w^u_i(v^u_i)\xi^s_i). 
\end{equation}
From the tangency properties of the $w^s_i,w^u_i$ functions at $(v^s_i,v^u_i) = (0,0)$, it follows that the above change of variable is a local diffeomorphism. Putting this change of variable into the Poincar\'e map $P_i$ and expanding in a neighborhood of $(v^s_i,v^u_i) = (0,0)$ gives the expansion~\eqref{Shil}. Moreover, this change of variable gives that $W^s_\mathrm{loc}(\tilde\gamma_i(0)) = \{v^u_i = 0\}$ and $W^u_\mathrm{loc}(\tilde\gamma_i(0)) = \{v^s_i = 0\}$. Finally, we may take $\delta := \min \{\delta_1,\dots,\delta_{k+1}\}$ to arrive at the $\delta > 0$ in the statement of the lemma. This completes the proof.
\end{proof} 

With $\delta > 0$ taken sufficiently small, each iterate of~\eqref{Shil} represents a solution of~\eqref{ODE} that completes a full revolution of $\tilde\gamma_i(t)$. In this way, we can use~\eqref{Shil} to quantify orbits of~\eqref{ODE} that pass through a neighborhood of the periodic orbits $\tilde\gamma_i(t)$. Precisely, the saddle structure gives that orbits enter the neighborhood closely following the stable manifold and exit the neighborhood closely following the unstable manifold, all while wrapping around the periodic orbit. This is made precise with the following lemma which originally appeared in~\cite{BramLDS} and comes as the discrete-time analogue of the main result of~\cite{Schecter}.   

\begin{lem}[\cite{BramLDS} Lemma~3.2] \label{lem:ShilSol} 
	There exists constants $\eta \in (0,1)$ and $M > 0$ such that the following is true: for each $1 \leq i \leq k+1$, $N > 0$, and $a^s,a^u \in \mathcal{I}$ there exists a unique solution to~\eqref{Shil}, written $v_{i,n} = (v^s_{i,n},v^u_{i,n}) \in \mathcal{I}\times\mathcal{I}$ with $n \in \{0,\dots,N\}$, such that
	\begin{equation}
		v_{i,0}^s = a^s, \quad v_{i,N}^u = a^u.
	\end{equation}
	Furthermore, this solution satisfies 
	\begin{equation}\label{LocalBnds}
		|v^s_{i,n}| \leq M\eta^n, \quad |v^u_{i,n}| \leq M\eta^{N-n},
	\end{equation}
	for all $n \in \{0,\dots,N\}$, $v_{i,n} = v_{i,n}(a^s,a^u)$ depends smoothly on $(a^s,a^u)$, and the bounds~\eqref{LocalBnds} also hold for the derivatives of $v$ with respect to $(a^s,a^u)$.
\end{lem}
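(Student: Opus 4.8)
The plan is to read Lemma~\ref{lem:ShilSol} as a discrete-time Shil'nikov-type two-point boundary value problem for the saddle map~\eqref{Shil}: one prescribes the (contracting) stable coordinate $a^s=v^s_{i,0}$ at the entry index $n=0$ and the (expanding) unstable coordinate $a^u=v^u_{i,N}$ at the exit index $n=N$, and looks for the unique length-$N$ orbit segment interpolating these data. This is exactly \cite[Lemma~3.2]{BramLDS}, the discrete analogue of \cite{Schecter}, and I would reproduce its proof by a uniform contraction argument. Collecting the higher-order terms in~\eqref{Shil} into remainders $g^s_i,g^u_i$ that are at least quadratic in $(v^s,v^u)$ (hence $O(\delta)$-Lipschitz on $\mathcal I\times\mathcal I$ after shrinking the $\delta$ from Lemma~\ref{lem:Shil}), one solves the stable component forward from $n=0$ and the unstable component backward from $n=N$ by variation of constants. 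This recasts the boundary value problem as the fixed point equation $v=T(v)$ on finite sequences $v=(v_n)_{n=0}^N$, where
\begin{gather*}
	(Tv)^s_n = \lambda_i^{-n}a^s + \sum_{m=0}^{n-1}\lambda_i^{\,m+1-n}\,g^s_i(v_m), \\
	(Tv)^u_n = \lambda_i^{\,n-N}a^u - \sum_{m=n}^{N-1}\lambda_i^{\,n-m-1}\,g^u_i(v_m).
\end{gather*}

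Fix any $\eta$ with $\max_i|\lambda_i|^{-1}<\eta<1$ (possible since there are only finitely many periodic orbits, cf.\ \S\ref{subsec:Local}) and equip the space of length-$(N+1)$ sequences with the mixed weighted norm $\|v\|=\max\{\sup_n\eta^{-n}|v^s_n|,\ \sup_n\eta^{-(N-n)}|v^u_n|\}$. I would then verify, using only convergence of the geometric series $\sum_{j\ge 0}|\lambda_i|^{-j}$, that for $\delta$ sufficiently small $T$ maps the ball $\{\|v\|\le\delta\}$ into itself and is a contraction there, with contraction factor and ball radius independent of $N$, of $(a^s,a^u)$, and of $i$. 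The structural point making the ball invariant is that $(Tv)^s$ carries no nonlinear correction at $n=0$ and $(Tv)^u$ none at $n=N$, so the boundary contributions $|a^s|,|a^u|\le\delta$ are not amplified, whereas for $n\ge 1$ (resp.\ $n\le N-1$) the gain factor $(|\lambda_i|\eta)^{-1}<1$ leaves room to absorb the $O(\delta^2)$ nonlinearity. The Banach fixed point theorem yields the unique $v_{i,n}$; unwinding the weighted norm gives simultaneously that $v_{i,n}\in\mathcal I\times\mathcal I$ and the bounds~\eqref{LocalBnds} (with $M=\delta$, although any larger $M$ is allowed and convenient below).

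For the smooth dependence and the derivative bounds I would differentiate $v=T(v)$ with respect to $a^s$ (the case of $a^u$ being identical). Since $T$ depends smoothly on these parameters with a contraction rate uniform in them, the uniform contraction mapping theorem shows $v_{i,n}$ is smooth in $(a^s,a^u)$; moreover $w:=\partial_{a^s}v$ solves the linear fixed point equation $w=DT(v)\,w+r$ with inhomogeneity $r=((\lambda_i^{-n})_{n},0)$ of weighted norm $\le 1$, and since $\|DT(v)\|\le C\delta<\tfrac12$ the Neumann series gives $\|w\|\le 2$, i.e.\ $|\partial_{a^s}v^s_{i,n}|\le 2\eta^n$ and $|\partial_{a^s}v^u_{i,n}|\le 2\eta^{N-n}$; one then takes $M=\max\{\delta,2\}$, and iterating the same estimate controls higher-order derivatives up to the smoothness of $F$. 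Finally $\eta$, $M$, and the shrunken $\delta$ are taken as the appropriate extreme over the finitely many indices $i$, exactly as described at the start of \S\ref{subsec:Local}.

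The main obstacle is keeping every estimate uniform in $N$ while working with the mixed weighted norm: the convolution sums in $T$ couple the forward-weighted stable component to the backward-weighted unstable one through the cross terms $v^s_m v^u_m$ inside $g^s_i,g^u_i$, and one must confirm that, for a fixed $\eta\in(\max_i|\lambda_i|^{-1},1)$, these sums are bounded by an $N$-independent multiple of $\delta^2$. Once that uniform bound is in place, invariance of the ball, the contraction property, smoothness in $(a^s,a^u)$, and the exponential derivative bounds all follow by standard arguments.
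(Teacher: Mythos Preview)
The paper does not supply a proof of this lemma: it is quoted as \cite[Lemma~3.2]{BramLDS} and described as the discrete-time analogue of \cite{Schecter}, with no argument given. Your fixed-point argument in the mixed exponentially weighted norm is precisely the standard proof of that cited result, so there is nothing in the present paper to compare against.

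One refinement is worth making explicit when you carry out the estimates. You correctly flag the cross-coupling as the main obstacle, but its resolution depends on more than the remainders $g^{s/u}_i$ being generically quadratic or $O(\delta)$-Lipschitz: it uses the specific factored form of~\eqref{Shil} produced by the manifold-straightening in Lemma~\ref{lem:Shil}, namely that the entire stable update carries an overall factor $v^s_{i,n}$ and the unstable one a factor $v^u_{i,n}$. Consequently $|g^s_i(v_m)|\le C\delta\,|v^s_m|$ (rather than merely $C\delta(|v^s_m|+|v^u_m|)$), and likewise for the Lipschitz increment $|g^s_i(v_m)-g^s_i(w_m)|\le C\delta|v^s_m-w^s_m|+C|w^s_m|\,|v_m-w_m|$. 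This is exactly what lets the weighted convolution sum collapse to the convergent geometric series $\sum_{j\ge 0}(|\lambda_i|^{-1}/\eta)^j$ uniformly in $N$. A generic quadratic remainder containing a pure $(v^u)^2$ contribution to $g^s_i$ would produce a factor of order $\eta^{-N}$ when $n$ is near $N$ and destroy the $N$-uniformity; so the invariance of $\{v^s=0\}$ and $\{v^u=0\}$ built into~\eqref{Shil} is essential to the argument, not incidental.
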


\subsection{Transferring Between Periodic Orbits}\label{subsec:Transfer}

Recall that $\W$ comprises the set of all homoclinic and heteroclinic orbits of~\eqref{ODE} between the $p \geq 1$ periodic orbits $\{\gamma_i(t)\}_{i = 1}^p$. From Hypothesis~\ref{hyp:HomHet} we have that $\W \neq \emptyset$, and we have assumed the existence of a set $\W_0 \subset \W$ which is a nonempty finite collection of transverse homoclinic and heteroclinic orbits between the periodic orbits. Moreover, the walk along the edges $(E_1,E_2,\dots,E_k)$ can equivalently be written as an ordered list of elements in $\W_0$. Let us denote the element $h_i(t) \in \W_0$ to be the orbit used to form the edge $E_i$ on the graph $\mathcal{G}$. Then, from Hypothesis~\ref{hyp:HomHet} and the definition of the graph $\mathcal{G}'$, each $h_i(t) \in \W_0$ lies on a transverse intersection of $W^u(\tilde\gamma_i(t))$ and $W^s(\tilde\gamma_{i+1}(t))$. Using the local coordinates of Lemma~\ref{lem:Shil}, there exists $h^s_{i+1}$ so that 
\begin{equation}
	(v^s_{i+1},v^u_{i+1}) = (h^s_{i+1},0) \in \{h_i(t) \cap (-\delta,\delta)\times \{0\}\} \subset W^s(\tilde\gamma_{i+1}(0))\cap W^u(\tilde\gamma_i(0))
\end{equation} 
near the fixed point $\tilde\gamma_{i+1}(0)$ of the Poincar\'e mapping $P_{i+1}$. That is, the point $(h^s_{i+1},0)\in\mathcal{I}\times\mathcal{I}$ represents a point of intersection of the orbit $h_i(t)$ with the local Poincar\'e section $\Sigma_{i+1}$, lying along the stable manifold of $\tilde\gamma_{i+1}(t)$. Similarly, there exists $h_i^u$ so that 
\begin{equation}
	(v^s_i,v^u_i) = (0,h_i^u) \in \{h_i(t) \cap \{0\}\times(-\delta,\delta)\} \subset W^s(\tilde\gamma_{i+1}(0))\cap W^u(\tilde\gamma_i(0)),
\end{equation} 
representing a point of intersection of the orbit $h_i(t)$ with the local Poincar\'e section $\Sigma_i$, lying along the unstable manifold of $\tilde\gamma_i(t)$. We note that there are infinitely many choices of $h_{i+1}^s$ and $h_i^u$ and it does not matter which we choose except that they lie entirely in the interior of $\mathcal{I}\times\mathcal{I}$. Furthermore, the assumption of transversality of the orbit $h_i(t)$ implies that $(h^s_{i+1},0)$ represent a transverse point of intersection between the stable manifold of $\tilde\gamma_{i+1}(t)$ inside the Poincar\'e sections $\Sigma_{i+1}$. The analogous statement also holds in that the point $(0,h_i^u)$ lies along a transverse point of intersection inside the Poincar\'e section $\Sigma_i$.  

The following lemma makes explicit use of the assumption that the intersections of $W^s(\tilde\gamma_{i+1}(0))\cap W^u(\tilde\gamma_i(0))$ are transverse. We will adopt the notation $B_r(x)$ to denote the ball of radius $r > 0$ about the point $x$.

\begin{lem}\label{lem:GFn} 
	There exists $\varepsilon > 0$ such that the following is true for all $1 \leq i \leq k$:
	\begin{enumerate}
		\item There exists a smooth function $G^u_{i+1}:B_\varepsilon(h^s_{i+1},0)\to\mathbb{R}$ such that $G^u_{i+1}(v^s_{i+1},v^u_{i+1}) = 0$ if and only if $(v^s_{i+1},v^u_{i+1})\in W^u(\tilde\gamma_i(0))\cap B_\varepsilon(h^s_{i+1},0)$. Furthermore, $\partial_{v^s_{i+1}}G^u_{i+1}(h^s_{i+1},0) \neq 0$.
		\item There exists a smooth function $G^s_i:B_\varepsilon(0,h^u_i)\to\mathbb{R}$ such that $G^s_i(v^s_i,v^u_i) = 0$ if and only if $(v^s_i,v^u_i)\in W^s(\tilde\gamma_{i+1}(0))\cap B_\varepsilon(0,h^u_i)$. Furthermore, $\partial_{v^u_i}G^u_{i,j}(0,h^u_i) \neq 0$.
	\end{enumerate}
\end{lem}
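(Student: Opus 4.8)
The plan is to realize each global invariant manifold appearing in the statement as a regular zero level set, using the implicit function theorem in the straightening coordinates of Lemma~\ref{lem:Shil}, with the transversality hypothesis on $\W_0$ supplying the nonvanishing partial derivative. I will describe point~(1); point~(2) is symmetric, swapping the roles of $W^s$ and $W^u$ and of $\Sigma_{i+1}$ and $\Sigma_i$. First I would record regularity: since each $\tilde\gamma_i(t)$ is a hyperbolic periodic orbit of the smooth vector field $F$ (Hypothesis~\ref{hyp:Periodic}), the global manifolds $W^u(\tilde\gamma_i)$ and $W^s(\tilde\gamma_{i+1})$ are smooth immersed submanifolds of dimension two inside the three-dimensional level set $L := \{\mathcal{H} = \mathcal{H}(\tilde\gamma_i(0))\}$, invariant under the flow. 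At $h_i(0)$ the flow direction $V := F(h_i(0))$ lies in both $T_{h_i(0)}W^u(\tilde\gamma_i)$ and $T_{h_i(0)}W^s(\tilde\gamma_{i+1})$, since $h_i(t)$ is an orbit contained in both manifolds; as $\Sigma_{i+1}$ is transverse to the flow it follows that $W^u(\tilde\gamma_i)\pitchfork\Sigma_{i+1}$ at $h_i(0)$, so $W^u(\tilde\gamma_i)\cap\Sigma_{i+1}$ is a smooth embedded curve near $(h^s_{i+1},0)$.

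The key step is to transfer the transversality of the elements of $\W_0$ from $L$ down to the Poincar\'e section. Hypothesis~\ref{hyp:HomHet} gives $T_{h_i(0)}W^u(\tilde\gamma_i) + T_{h_i(0)}W^s(\tilde\gamma_{i+1}) = T_{h_i(0)}L$, which is three-dimensional; writing $T_{h_i(0)}W^u(\tilde\gamma_i) = \mathrm{span}\{V,a\}$ and $T_{h_i(0)}W^s(\tilde\gamma_{i+1}) = \mathrm{span}\{V,b\}$, this is exactly the statement that $V,a,b$ are linearly independent. Projecting $T_{h_i(0)}L$ onto $T_{h_i(0)}\Sigma_{i+1}$ along $V$ sends $T W^u(\tilde\gamma_i)$ onto the tangent line of $W^u(\tilde\gamma_i)\cap\Sigma_{i+1}$ and $T W^s(\tilde\gamma_{i+1})$ onto the tangent line of $W^s(\tilde\gamma_{i+1})\cap\Sigma_{i+1}$, and linear independence of $V,a,b$ forces these two lines to be distinct. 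By Lemma~\ref{lem:Shil} the coordinates $(v^s_{i+1},v^u_{i+1})$ make $W^s_{\mathrm{loc}}(\tilde\gamma_{i+1}(0)) = \{v^u_{i+1}=0\}$, so near $(h^s_{i+1},0)\in\{v^u_{i+1}=0\}$ the curve $W^s(\tilde\gamma_{i+1})\cap\Sigma_{i+1}$ is the $v^s_{i+1}$-axis, and the conclusion of the previous sentence says the tangent line of $W^u(\tilde\gamma_i)\cap\Sigma_{i+1}$ at $(h^s_{i+1},0)$ is not horizontal, i.e.\ has a nonzero $v^u_{i+1}$-component.

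By the implicit function theorem there is then $\varepsilon_i > 0$ and a smooth function $\psi_{i+1}$ with $\psi_{i+1}(0) = h^s_{i+1}$ such that $W^u(\tilde\gamma_i)\cap B_{\varepsilon_i}(h^s_{i+1},0) = \{v^s_{i+1} = \psi_{i+1}(v^u_{i+1})\}$, and $G^u_{i+1}(v^s_{i+1},v^u_{i+1}) := v^s_{i+1} - \psi_{i+1}(v^u_{i+1})$ is smooth on $B_{\varepsilon_i}(h^s_{i+1},0)$, vanishes exactly on $W^u(\tilde\gamma_i)$ there, and has $\partial_{v^s_{i+1}}G^u_{i+1}\equiv 1\neq 0$. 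Point~(2) is obtained identically on $\Sigma_i$, where now $W^u_{\mathrm{loc}}(\tilde\gamma_i(0)) = \{v^s_i = 0\}$ is the distinguished coordinate axis, $(0,h^u_i)$ the distinguished point, and transversality makes $W^s(\tilde\gamma_{i+1})\cap\Sigma_i$ a graph $v^u_i = \phi_i(v^s_i)$, so that $G^s_i := v^u_i - \phi_i(v^s_i)$ has $\partial_{v^u_i}G^s_i\equiv 1\neq 0$. Taking $\varepsilon := \min_i\varepsilon_i$ over the finitely many indices $1\le i\le k$ --- and recalling, as in Lemma~\ref{lem:Shil}, that only finitely many distinct periodic orbits occur among the $\tilde\gamma_i$ --- gives the uniform $\varepsilon$ of the statement. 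The step needing the most care is the descent of transversality from $L$ to the section: one must verify that the flow direction common to both tubes along $h_i(t)$ is precisely what is quotiented out in passing to $\Sigma_{i+1}$, so that a transverse intersection of the tubes produces a transverse --- hence, locally, a single regular --- intersection of their traces rather than a tangency or a more degenerate contact.
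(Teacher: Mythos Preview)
Your approach is essentially the same as the paper's: represent $W^u(\tilde\gamma_i)\cap\Sigma_{i+1}$ locally as a graph $v^s_{i+1}=g(v^u_{i+1})$ and set $G^u_{i+1}:=v^s_{i+1}-g(v^u_{i+1})$, so that $\partial_{v^s_{i+1}}G^u_{i+1}\equiv 1$. The paper's proof simply asserts the graph representation as a consequence of transversality, whereas you supply the missing justification by explicitly reducing the three-dimensional transversality $T W^u + T W^s = TL$ along the common flow direction to the statement that the two traces in $\Sigma_{i+1}$ meet at distinct tangent lines, and then invoking $W^s_{\mathrm{loc}}=\{v^u_{i+1}=0\}$ to conclude the $W^u$ trace is not tangent to the $v^s_{i+1}$-axis. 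This is a welcome elaboration rather than a different route.
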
 

\begin{proof}
We will only prove the first statement since the second is handled identically. By assumption we have that $(h^s_{i+1},0) \in \mathcal{I}\times\mathcal{I}$ represents a transverse point of intersection between $W^s(\tilde\gamma_{i+1}(0))$ and $W^u(\tilde\gamma_i(0))$. Therefore, we can locally parameterize $W^u(\tilde\gamma_i(0))$ near $(h^s_{i+1},0)$ by the function $v^s_{i+1} = g(v^u_{i+1})$ so that $h^s_{i+1} = g(0)$. Then, defining 
	\begin{equation}
		G^u_{i+1}(v^s_{i+1},v^u_{i+1}) = v^s_{i+1} - g(v^u_{i+1})
	\end{equation}
	gives a function that locally vanishes on $W^u(\tilde\gamma_i(0))$ and satisfies $\partial_{v^s_{i+1}}G^u_{i+1}(h^s_{i+1},0) \neq 0$. We can therefore take $\varepsilon > 0$ small enough to ensure that $B_\varepsilon(h^s_{i+1},0)$ lies inside of $\mathcal{I}\times\mathcal{I}$ and that $G^u_{i+1}$ only vanishes when $(v^s_{i+1},v^u_{i+1}) \in W^u(\tilde\gamma_{i+1}(0))$. This completes the proof of the first point.
\end{proof} 

The final result of this subsection describes a push-forward map that transfers one between the local Poincar\'e sections $\Sigma_i$. This transfer is done by shadowing the homoclinic/heteroclinic orbits $h_i(t)$ which make up the edges $E_i$ in the walk on $\mathcal{G}$. 

\begin{lem}\label{lem:PushForward} 
	There exists an $\varepsilon > 0$ such that for each $1 \leq i \leq k$ there exists a smooth map $\Pi_i: B_\varepsilon(0,h^u_i) \to \mathcal{I}\times\mathcal{I}$ such that $\Pi_i(0,h^u_i) = (h^s_{i+1},0)$ and $\Pi_i$ is diffeomorphism in a neighborhood of $(0,h^u_i)$. 
\end{lem}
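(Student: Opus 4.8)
The plan is to realise $\Pi_i$ as the Poincar\'e transfer map of the flow of~\eqref{ODE}, restricted by Hypothesis~\ref{hyp:Ham} to the three-dimensional level set of $\mathcal{H}$ containing the $\tilde\gamma_j(t)$, obtained by following the connecting orbit $h_i(t)$ from $\Sigma_i$ to $\Sigma_{i+1}$. Write $\phi_t$ for this flow. First I would fix the reference data: in the coordinates of Lemma~\ref{lem:Shil} near $\tilde\gamma_i(0)$ the point $(v^s_i,v^u_i)=(0,h^u_i)$ corresponds to an honest point $q_i := h_i(t^-_i)$ on the orbit $h_i$, lying in $W^u_{\mathrm{loc}}(\tilde\gamma_i(0))=\{v^s_i=0\}$, and in the coordinates near $\tilde\gamma_{i+1}(0)$ the point $(v^s_{i+1},v^u_{i+1})=(h^s_{i+1},0)$ corresponds to a point $q_i' := h_i(t^+_i)$ lying in $W^s_{\mathrm{loc}}(\tilde\gamma_{i+1}(0))=\{v^u_{i+1}=0\}$. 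Since $q_i$ is close to $\tilde\gamma_i(0)$ with $v^u_i=h^u_i\neq 0$ and $q_i'$ is close to $\tilde\gamma_{i+1}(0)$ with $v^s_{i+1}=h^s_{i+1}\neq 0$ — that is, neither point is a saddle fixed point of the relevant Poincar\'e map — the orbit arc of $h_i$ joining $q_i$ to $q_i'$ is traversed in finite time: there is $T_i>0$ with $\phi_{T_i}(q_i)=q_i'$.

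Next I would construct $\Pi_i$ by the implicit function theorem, in the standard way one builds a Poincar\'e map between two transverse cross-sections along an orbit segment. Because $\Sigma_{i+1}$ is a Poincar\'e section for the flow, it is, within the level set, a piece of the hyperplane through $\tilde\gamma_{i+1}(0)$ with normal $F(\tilde\gamma_{i+1}(0))$ to which $\phi_t$ is transverse at every one of its points; in particular $\langle F(q_i'),F(\tilde\gamma_{i+1}(0))\rangle\neq 0$. For $u\in\Sigma_i$ near $q_i$ and $\tau$ near $T_i$ set
\begin{equation}
    E_i(\tau,u) := \langle \phi_\tau(u)-\tilde\gamma_{i+1}(0),\,F(\tilde\gamma_{i+1}(0))\rangle .
\end{equation}
This is smooth, vanishes at $(T_i,q_i)$, and has $\partial_\tau E_i(T_i,q_i)=\langle F(q_i'),F(\tilde\gamma_{i+1}(0))\rangle\neq 0$, so the implicit function theorem supplies a smooth crossing time $\tau_i(u)$ with $\tau_i(q_i)=T_i$ and $E_i(\tau_i(u),u)=0$ near $q_i$. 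Defining $\Pi_i(u):=\phi_{\tau_i(u)}(u)$ and reading off its $\Sigma_{i+1}$-coordinates $(v^s_{i+1},v^u_{i+1})$ gives a smooth map into $\Sigma_{i+1}$ with $\Pi_i(0,h^u_i)=q_i'=(h^s_{i+1},0)$, as required.

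Finally I would check the diffeomorphism property. The cleanest route is to repeat the construction for the time-reversed flow $\phi_{-t}$: the arc of $h_i$ from $q_i'$ back to $q_i$ is again finite, $\phi_{-t}$ is transverse to $\Sigma_i$ at $q_i$, and the implicit function theorem yields a smooth map from a neighborhood of $q_i'$ in $\Sigma_{i+1}$ to $\Sigma_i$ sending $q_i'\mapsto q_i$; by the local uniqueness in the implicit function theorem this is a two-sided inverse of $\Pi_i$ near $q_i$, so $\Pi_i$ is a local diffeomorphism there. Equivalently, $D\Pi_i(0,h^u_i)$ is the composition of the isomorphism $D\phi_{T_i}(q_i)$ of the tangent to the level set with the projection onto $T_{q_i'}\Sigma_{i+1}$ along $F(q_i')$, and this restricts to an isomorphism $T_{q_i}\Sigma_i\to T_{q_i'}\Sigma_{i+1}$ since $D\phi_{T_i}(q_i)F(q_i)=F(q_i')$ forces $D\phi_{T_i}(q_i)(T_{q_i}\Sigma_i)$ to be a complement of $\mathbb{R}F(q_i')$ in that tangent space. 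It then remains to pin down $\varepsilon$: take it to be the minimum, over the finitely many indices $1\leq i\leq k$, of the radii of the neighborhoods produced above, shrunk further so that $\Pi_i$ restricts to a diffeomorphism onto its image on $B_\varepsilon(0,h^u_i)$ and — using continuity of $\Pi_i$ together with $(h^s_{i+1},0)$ lying in the interior of $\mathcal{I}\times\mathcal{I}$ — so that $\Pi_i(B_\varepsilon(0,h^u_i))\subset\mathcal{I}\times\mathcal{I}$.

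The only nonroutine points, and the ones I would be careful with, are (i) that the orbit arc of $h_i$ between the two chosen crossing points has finite duration, which is precisely why the set-up preceding the lemma insisted $h^u_i$ and $h^s_{i+1}$ be chosen in the interior of $\mathcal{I}\times\mathcal{I}$ rather than at the periodic orbits themselves, and (ii) transversality of the flow to $\Sigma_i$ and $\Sigma_{i+1}$ at the non-equilibrium points $q_i$ and $q_i'$, which is part of these being Poincar\'e sections (or follows by shrinking the section disks and continuity of $F$). Neither is a genuine difficulty; everything else is the textbook construction of a Poincar\'e map between transverse sections.
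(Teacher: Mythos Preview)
Your proposal is correct and follows essentially the same approach as the paper: both define $\Pi_i$ as the Poincar\'e transfer map obtained by flowing from $\Sigma_i$ to $\Sigma_{i+1}$ along the connecting orbit $h_i(t)$. The paper's proof is considerably terser, citing only smooth dependence on initial conditions for smoothness and uniqueness of solutions for invertibility, whereas you supply the implicit-function-theorem construction of the crossing time and two explicit arguments (time-reversed flow, or the derivative computation) for the local diffeomorphism property; your care with points (i) and (ii) is well placed but not a departure in strategy.
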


\begin{proof}
Smooth dependence on initial conditions for the continuous-time flow~\eqref{ODE} guarantees that for some $\varepsilon > 0$ chosen sufficiently small, each point in $B_\varepsilon(0,h^u_i)$ defines a trajectory that lies close to the orbit $h_i(t)$ that eventually will intersect the local component of the Poincar\'e section $\Sigma_{i+1}$ in a small neighborhood of $(h^s_{i+1},0)$. We define $\Pi_i$ to be the map that transports the initial conditions in $B_\varepsilon(0,h^u_i)$ to their image in the small neighborhood of $(h^s_{i+1},0)$ in $\Sigma_{i+1}$, with $\varepsilon$ taken sufficiently small that the range lies entirely inside of $\mathcal{I}\times\mathcal{I}$ when converted to the coordinates $(v^s_{i+1},v^u_{i+1})$ defined in Lemma~\ref{lem:Shil}. By definition, $\Pi_i(0,h^u_i) = (h^s_{i+1},0)$, smooth dependence on initial conditions guarantees that $\Pi_i$ is a smooth map, and uniqueness of solutions to~\eqref{ODE} guarantees that $\Pi_i$ is invertible on its range. This completes the proof. 	
\end{proof} 

\subsection{Constructing Homoclinic and Heteroclinic Trajectories}\label{subsec:HomHet}

With the previous subsections we were able to understand the local dynamics near each of the periodic orbits $\tilde\gamma_i(t)$ in the Poincar\'e sections $\Sigma_i$ and how we can transfer between these local dynamical systems. Here we will now prove Theorem~\ref{thm:Main}, starting with the homoclinic and heteroclinic orbits described in the point (1). It should be noted that we need only consider $k \geq 2$ since the case $k = 1$ is handled by the elements $h_i(t) \in \W_0$. 

We start by employing Lemma~\ref{lem:ShilSol} to obtain solutions of~\eqref{Shil} in a neighborhood of $\tilde\gamma_i(0)$, $2 \leq i \leq k$, given by
\begin{equation}
	\begin{split}
		\{(v^s_{2,n},v^u_{2,n})\}_{n = 0}^{N_2},& \quad v^s_{2,0} = h^s_2 +a^s_2, \quad v^u_{2,N_2}= h^u_2 +a^u_2 \\
		\{(v^s_{3,n},v^u_{3,n})\}_{n = 0}^{N_3},& \quad v^s_{3,0} = h^s_3 +a^s_3, \quad v^u_{3,N_3}= h^u_3 +a^u_3 \\
		&\vdots \\
		\{(v^s_{k,n},v^u_{k,n})\}_{n = 0}^{N_{k}},& \quad v^s_{k,0} = h^s_k +a^s_{k}, \quad v^u_{k,N_{k}}= h^u_k +a^u_{k},
	\end{split}	
\end{equation} 
respectively. Here the $a^s_i$ and $a^u_i$ are taken to be small and the $N_i \geq 1$ sufficiently large to guarantee that 
\begin{equation}\label{Expansions}
	\begin{split}
		(v^s_{i,0},v^u_{i,0}) &= (h^s_i+a_i^s,\mathcal{O}(\eta^{N_i})) \in B_\varepsilon(h^s_i,0) \\
		(v^s_{i,N_j},v^u_{i,N_j}) &= (\mathcal{O}(\eta^{N_i}),h^u+a_i^u) \in B_\varepsilon(0,h^u_i) \\
	\end{split}
\end{equation} 
for each $2 \leq j \leq k$, where $\varepsilon > 0$ is chosen small enough to satisfy the requirement for Lemmas~\ref{lem:GFn} and \ref{lem:PushForward}. Each of the above solutions represents a trajectory of the continuous-time system~\eqref{ODE} that wraps at least $N_i \geq 1$ times around $\tilde\gamma_i(t)$. We may stitch these solutions together to create a homoclinic or heteroclinic orbit by attempting to satisfy the following matching conditions:
\begin{equation}\label{HomMatch}
	\begin{split}
		G^u_2(v^s_{2,0},v^u_{2,0}) &= 0 \\
		\Pi_2(v^s_{2,N_2},v^u_{2,N_2}) - (v^s_{3,0},v^u_{3,0}) &= 0 \\
		\Pi_3(v^s_{3,N_3},v^u_{3,N_3}) - (v^s_{4,0},v^u_{4,0}) &= 0 \\ 
		&\vdots \\
		\Pi_{k-1}(v^s_{k-1,N_{k-1}},v^u_{k-1,N_{k-1}}) - (v^s_{k,0},v^u_{k,0}) &= 0 \\
		G^s_k(v^s_{k,N_{k}},v^u_{k,N_{k}}) &= 0.
	\end{split}
\end{equation} 
Indeed, from Lemma~\ref{lem:GFn} we have that the first condition guarantees that the trajectory belongs to $W^u(\tilde\gamma_1(0))$, and the final condition guarantees that the trajectory belongs to $W^s(\tilde\gamma_{k+1}(0))$. The intermediary conditions use the push-forward maps $\Pi_i$ to guarantee that each time the trajectory leaves the neighborhood $\tilde\gamma_i(0)$ it moves to the neighborhood of $\tilde\gamma_{i+1}(0)$ to complete $N_i \geq 1$ iterations before leaving for the next neighborhood. We are now in position to prove Theorem~\ref{thm:Main}(1). Prior to doing so, we present the following lemma coming from~\cite{RadialPulse} that will be used throughout the proof.

\begin{lem}[\cite{RadialPulse} Lemma~7.2]\label{lem:Roots} 
If $H:\R^n \to \R^n$ is smooth and there are constants $0 < \kappa < 1$ and $\rho > 0$, a vector $w_0 \in \R^n$, and an invertible matrix $A \in \R^{n\times n}$ so that
\begin{enumerate}
	\item $\|1 - A^{-1}DH(w)\| \leq \kappa$ for all $w \in B_\rho(w_0)$, and
	\item $\|A^{-1}H(w_0)\| \leq (1 - \kappa)\rho$
\end{enumerate}
then $H$ has a unique root $w_*$ in $B_\rho(w_0)$, and this root satisfies $|w_* - w| \leq \frac{1}{1-\kappa}\|A^{-1}F(w_0)\|$.
\end{lem}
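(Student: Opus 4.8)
The plan is to recognize Lemma~\ref{lem:Roots} as a quantitative (Newton--Kantorovich type) form of the implicit function theorem and to prove it by converting the root-finding problem into a fixed point problem to which the Banach contraction mapping principle applies. Since $A$ is invertible, a point $w$ is a root of $H$ if and only if it is a fixed point of the Newton-type map
\begin{equation}
	T(w) := w - A^{-1}H(w),
\end{equation}
so it suffices to show $T$ has a unique fixed point in the closed ball $\overline{B_\rho(w_0)}$ and to bound its distance to $w_0$. I would work with the closed ball throughout so that the ambient metric space is complete; the uniqueness assertion is then uniqueness within $\overline{B_\rho(w_0)}$, and if hypothesis~(2) holds with strict inequality the root automatically lies in the open ball.

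First I would verify that $T$ is a $\kappa$-contraction on $\overline{B_\rho(w_0)}$. Differentiating yields $DT(w) = I - A^{-1}DH(w)$, which by hypothesis~(1) has operator norm at most $\kappa$ for every $w \in B_\rho(w_0)$; since the ball is convex, integrating $DT$ along the segment joining any two points of $\overline{B_\rho(w_0)}$ gives $\|T(w) - T(w')\| \leq \kappa\|w - w'\|$. Next I would check the self-mapping property: for $w \in \overline{B_\rho(w_0)}$,
\begin{align}
	\|T(w) - w_0\| &\leq \|T(w) - T(w_0)\| + \|T(w_0) - w_0\| \\
	&\leq \kappa\|w - w_0\| + \|A^{-1}H(w_0)\| \leq \kappa\rho + (1-\kappa)\rho = \rho,
\end{align}
using hypothesis~(2) in the last step. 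The Banach fixed point theorem then produces a unique $w_* \in \overline{B_\rho(w_0)}$ with $T(w_*) = w_*$, equivalently a unique root of $H$ in that ball. Finally, the a priori estimate falls out of
\begin{equation}
	\|w_* - w_0\| = \|T(w_*) - w_0\| \leq \kappa\|w_* - w_0\| + \|A^{-1}H(w_0)\|,
\end{equation}
which rearranges to $(1-\kappa)\|w_* - w_0\| \leq \|A^{-1}H(w_0)\|$, i.e. the stated bound (with the evident correction that $w$ should read $w_0$ and $F$ should read $H$).

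I do not expect a genuine obstacle here: the content is a standard contraction argument, and the only points needing mild care are the choice of the closed rather than open ball so that completeness is available, the use of convexity of $B_\rho(w_0)$ to upgrade the pointwise derivative bound in~(1) to a global Lipschitz bound on $T$, and the observation that only $C^1$ regularity of $H$ is actually used even though the hypotheses are phrased for smooth $H$. Everything else is routine, which is precisely what makes this lemma convenient for verifying matching conditions such as~\eqref{HomMatch} in the construction that follows.
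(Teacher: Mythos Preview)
Your argument is correct and is the standard contraction-mapping proof of this Newton--Kantorovich type lemma. Note, however, that the paper does not supply its own proof of Lemma~\ref{lem:Roots}: the result is simply quoted from \cite{RadialPulse} (Lemma~7.2 there) and used as a black box in the construction of the matching solutions in \S\ref{subsec:HomHet}--\S\ref{subsec:Periodic}, so there is no in-paper argument to compare against.
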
  

\begin{proof}[Proof of Theorem~\ref{thm:Main}(1)]
Let us begin with the case $k = 2$ for illustration. In this case~\eqref{HomMatch} becomes
\begin{equation}\label{2HomMatch}
\begin{split}
		G^u_2(v^s_{2,0},v^u_{2,0}) &= 0 \\
		G^s_2(v^s_{2,N_{2}},v^u_{2,N_{2}}) &= 0 \\
	\end{split}
\end{equation}
meaning that the orbit asymptotically connects $\tilde\gamma_1(0)$ and $\tilde\gamma_3(0)$ while iterating $N_2$ times through the neighborhood of $\tilde\gamma_2(0)$. Let us define the function $H_2:\R^2 \to \R^2$ by 
\begin{equation}
	H_2(a^s_2,a^u_2) := \begin{bmatrix}
		G^u_2(v^s_{2,0}(a^s_2,a^u_2),v^u_{2,0}(a^s_2,a^u_2)) \\ G^s_2(v^s_{2,N_2}(a^s_2,a^u_2),v^u_{2,N_2}(a^s_2,a^u_2))	
	\end{bmatrix}
\end{equation} 
so that for $(a^s_2,a^u_2)$ small the roots of $H_2$ are exactly the values of $(a^s_2,a^u_2)$ that satisfy the matching conditions~\eqref{2HomMatch}. Using the expansions~\eqref{Expansions} we have
\begin{equation}\label{2HomExpansion}
		\begin{split}
		G^s_2(v^s_{2,0},v^u_{2,0}) &= G^s_2(h^s_2+a^s_2,\mathcal{O}(\eta^{N_2})) = G^s(h^s_2+a^s_2,0) + \mathcal{O}(\eta^{N_2})  \\
		G^u_2(v^s_{2,N_2},v^u_{2,N_2}) &= G^u_2(\mathcal{O}(\eta^{N_2}),h^u_2+a^u_2) =  G^u(0,h^u_2+a^u_2) + \mathcal{O}(\eta^{N_2})
	\end{split}	
	\end{equation}  
	when $N_2$ is taken sufficiently large. 
	
	Let us define the matrix 
	\begin{equation}
		A_2 = \begin{bmatrix}
			\partial_{v^s_2}G^s_2(h^s_2,0) & 0 \\ 0 & \partial_{v^u_2}G^u_2(0,h^u_2)
		\end{bmatrix},
	\end{equation}
	which from Lemma~\ref{lem:GFn} is invertible since the partial derivatives are assumed to be nonzero. We then have $H_2(0,0) = \mathcal{O}(\eta^{N_2})$, and so $\|A_2^{-1}H_2(0,0)\| = \mathcal{O}(\eta^{N_2})$. Furthermore, 
	\begin{equation}
		\|1 - A_2^{-1}DH_2(a^s_2,a^u_2)\| = \mathcal{O}(|a^s_2| + |a^u_2| + \eta^{N_2}).
	\end{equation}
	Then, with $\rho = \frac{1}{4}$, $\kappa = \frac{1}{2}$, and $N_2 \gg 1$ sufficiently large, Lemma~\ref{lem:Roots} gives that there exists a unique solution $(a^s_2,a^u_2) = (\bar a^s_2,\bar a^u_2)$ satisfying $H_2(\bar a^s_2,\bar a^u_2) = 0$ and 
	\begin{equation}
		\|(\bar a^s_2,\bar a^u_2)\| = \mathcal{O}(\eta^{N_2}).
	\end{equation}
	Hence, we have satisfied the matching conditions~\eqref{2HomMatch} and therefore proven Theorem~\ref{thm:Main}(1) in the case $k = 2$.
	
	The cases $k \geq 3$ are similar except that they include more matching conditions coming from the push-forward functions $\Pi_i$. Let us illustrate with $k = 3$. Then,~\eqref{HomMatch} becomes 
	\begin{equation}\label{3HomMatch}
\begin{split}
		G^u_2(v^s_{2,0},v^u_{2,0}) &= 0 \\
		\Pi_2(v^s_{2,N_2},v^u_{2,N_2}) - (v^s_{3,0},v^u_{3,0}) &= 0 \\
		G^s_3(v^s_{3,N_{3}},v^u_{3,N_{3}}) &= 0 \\
	\end{split}
\end{equation}
and as in the previous case, we will define the function $H_3:\R^4 \to \R^4$ by
\begin{equation}
	H_3(a^s_2,a^u_2,a^s_3,a^u_3) := \begin{bmatrix}
		G^u_2(v^s_{2,0}(a^s_2,a^u_2),v^u_{2,0}(a^s_2,a^u_2)) \\ \Pi_2(v^s_{2,N_2}(a^s_2,a^u_2),v^u_{2,N_2}(a^s_2,a^u_2)) - (v^s_{3,0}(a^s_3,a^u_3),v^u_{3,0}(a^s_3,a^u_3)) \\ G^s_3(v^s_{3,N_3}(a^s_3,a^u_3),v^u_{3,N_3}(a^s_3,a^u_3))	
	\end{bmatrix}
\end{equation}
so that when $(a^s_2,a^u_2,a^s_3,a^u_3)$ are sufficiently small the roots of $H_4$ are exactly the values of $(a^s_2,a^u_2,a^s_3,a^u_3)$ that satisfy the matching conditions~\eqref{3HomMatch}.

The expansions~\eqref{2HomExpansion} are still valid here with the appropriate changes of indices to accommodate the presence of $(v^s_{3,N_3},v^u_{3,N_3})$, and therefore we focus on the second condition. From~\eqref{Expansions} we have
	\begin{equation}
		\begin{split}
		\Pi_2(v^s_{2,N_2},v^u_{2,N_2}) - (v^s_{3,0},v^u_{3,0}) &= \Pi_2(\mathcal{O}(\eta^{N_2}),h^u_2 + a^u_2) - (h^s_3 + a^s_3,\mathcal{O}(\eta^{N_3})) \\
		&= \Pi_2(0,h^u_2 + a^u_2 ) - (h^s_3 + a^s_3,0) + \mathcal{O}(\eta^{\min\{N_2,N_3\}}).
		\end{split}
	\end{equation} 
Let us define the matrix 
\begin{equation}\label{A3}
	A_3 := \begin{bmatrix}
			\partial_{v^s_2}G^u_2(h^s_2,0) & 0 & 0 & 0 \\ 0 & \zeta^s_2 & -1 & 0 \\ 0 & \zeta^u_2 & 0 & 0 \\ 0 & 0 & 0 & \partial_{v^u_3}G^s(0,h^u_3)
		\end{bmatrix} + \mathcal{O}(\eta^{\min\{N_2,N_3\}})
\end{equation}
where
\begin{equation}\label{xiEqn}
		[\zeta^s_2, \zeta^u_2]^T = \partial_{a^u_2}\Pi_2(0,h^u_2).
\end{equation}
The matrix $A_3$ is invertible because Lemma~\ref{lem:GFn} gives that $\partial_{v^s_2}G^u_2(h^s_2,0),\partial_{v^u_3}G^s(0,h^u_3) \neq 0$ and $\zeta^u_2 \neq 0$ since vary $a^u_2$ in a neighborhood of zero causes $\Pi_2(0,h^u_2+a^u_2)$ to locally parametrize a connected component of $W^u(\tilde\gamma_2(0))$, which by assumption transversely intersects $W^s_\mathrm{loc}(\tilde\gamma_3(0)) = \{v^u_3 = 0\}$ at $(h^s_3,0) \in \mathcal{I}\times\mathcal{I}$ in a neighborhood of $\tilde\gamma_3(0)$. Then, $H_3(0,0,0,0) = \mathcal{O}(\eta^{\min\{N_2,N_3\}})$, thus giving that $\|A_3^{-1}H_3(0,0,0,0)\| = \mathcal{O}(\eta^{\min\{N_2,N_3\}})$. Furthermore, 
\begin{equation}
	\|1 - A_3^{-1}DH_3(a^s_2,a^u_2,a^s_3,a^u_3)\| = \mathcal{O}(|a^s_2| + |a^u_2| + |a^s_3| + |a^u_3| + \eta^{\min\{N_2,N_3\}}),
\end{equation}  
and therefore with $\rho = \frac{1}{4}$, $\kappa = \frac{1}{2}$, and $N_2,N_3 \gg 1$, Lemma~\ref{lem:Roots} gives that there exists a unique solution $(a^s_2,a^u_2,a^s_3,a^u_3) = (\bar a^s_2,\bar  a^u_2,\bar  a^s_3,\bar a^u_3)$ satisfying $H_3(\bar a^s_2,\bar  a^u_2,\bar  a^s_3,\bar a^u_3) = 0$ and 
\begin{equation}
	\|(\bar a^s_2,\bar  a^u_2,\bar  a^s_3,\bar a^u_3)\| = \mathcal{O}(\eta^{\min\{N_2,N_3\}}). 
\end{equation}
This solution satisfies the matching conditions~\eqref{3HomMatch} and therefore proves Theorem~\ref{thm:Main}(1) for the case $k = 3$.

The general setting of $k \geq 4$ is almost identical to the case of $k = 3$. Indeed, we can define a function $H_k:\R^{2k-2}\to\R^{2k-2}$ so that when $(a^s_2,a^u_2,\dots,a^s_k,a^u_k)$ are sufficiently small the roots of $H_k$ are exactly the values of $(a^s_2,a^u_2,\dots,a^s_k,a^u_k)$ that satisfy the matching conditions~\eqref{HomMatch}. We then define the matrix 
\begin{equation}
		A_k:= \begin{bmatrix}
			\partial_{v^s_2}G^u_2(h^s_2,0) & 0 & 0 & \cdots & 0 & 0& 0 \\ 0 & \zeta^s_2 & -1 & \cdots & 0 & 0 & 0 \\ 0 & \zeta^u_2 & 0 & \cdots & 0 & 0 & 0 \\ 
			\vdots & \vdots & \vdots & \ddots & \vdots & \vdots & \vdots \\
			0 & 0 & 0 & \cdots & \zeta^s_{k-1} & -1 & 0\\
			0 & 0 & 0 & \cdots &\zeta^u_{k-1} & 0 & 0\\
			0 & 0 & 0 & \cdots & 0 & 0 & \partial_{v^u_k}G^s_k(0,h^u_k)
		\end{bmatrix}
\end{equation} 
where 
\begin{equation}\label{Zeta}
	[\zeta^s_i, \zeta^u_i]^T = \partial_{a^u_i}\Pi_i(0,h^u_i).	
\end{equation}
for all $2\leq i \leq k-1$. The same arguments that were used to show that $A_3$ defined in~\eqref{A3} can be used to show that $A_k$ is invertible for every $k \geq 4$. Then, the application of Lemma~\ref{lem:Roots} to obtain a root of $H_k$ is as in the cases $k = 2,3$ above, thus satisfying the matching conditions~\eqref{HomMatch} and proving Theorem~\ref{thm:Main}(1) for the remain cases with $k \geq 4$.  
\end{proof} 

\subsection{Constructing Periodic Trajectories}\label{subsec:Periodic}

In this subsection we will prove Theorem~\ref{thm:Main}(2), concerning the existence of periodic trajectories. Much of the proofs will be the same as in the previous section and therefore we seek to only highlight the differences. Recall that the initial vertex $V_1$ of the walk is the same as the terminal vertex $V_{k+1}$, thus giving that $\tilde\gamma_{k+1}(t) = \tilde\gamma_1(t)$. This means that the desired periodic orbit enters and leaves the neighbourhoods of the orbits $\gamma_i(t)$ exactly $k$-times, while the final entry into the neighborhood of $\tilde\gamma_{k+1}(t)$ marks a completion of one full period of the trajectory. In these neighbourhoods we may employ Lemma~\ref{lem:ShilSol} to obtain $k \geq 1$ solutions of the form
\begin{equation}
	\begin{split}
		\{(v^s_{1,n},v^u_{1,n})\}_{n = 0}^{N_1},& \quad v^s_{1,0} = h^s_1 +a^s_1, \quad v^u_{1,N_1}= h^u_1 +a^u_1 \\
		\{(v^s_{2,n},v^u_{2,n})\}_{n = 0}^{N_2},& \quad v^s_{2,0} = h^s_2 +a^s_2, \quad v^u_{2,N_2}= h^u_2 +a^u_2 \\
		\{(v^s_{3,n},v^u_{3,n})\}_{n = 0}^{N_3},& \quad v^s_{3,0} = h^s_3 +a^s_3, \quad v^u_{3,N_3}= h^u_3 +a^u_3 \\
		&\vdots \\
		\{(v^s_{k,n},v^u_{k,n})\}_{n = 0}^{N_{k}},& \quad v^s_{k,0} = h^s_k +a^s_{k}, \quad v^u_{k,N_{k}}= h^u_k +a^u_{k},
	\end{split}	
\end{equation}  
with the $a^s_i$ and $a^u_i$ taken to be small and the $N_i \geq 1$ sufficiently large to again satisfy~\eqref{Expansions}. The required matching conditions to prove Theorem~\ref{thm:Main}(2) then become
\begin{equation}\label{PerMatch}
	\begin{split}
		\Pi_1(v^s_{1,N_1},v^u_{1,N_1}) - (v^s_{2,0},v^u_{2,0}) &= 0 \\
		\Pi_2(v^s_{2,N_2},v^u_{2,N_2}) - (v^s_{3,0},v^u_{3,0}) &= 0 \\
		&\vdots \\
		\Pi_{k-1}(v^s_{k-1,N_{k-1}},v^u_{k-1,N_{k-1}}) - (v^s_{k,0},v^u_{k,0}) &= 0 \\
		\Pi_k(v^s_{k,N_k},v^u_{k,N_k}) - (v^s_{1,0},v^u_{1,0}) &= 0 .
	\end{split}
\end{equation} 
Notice the similarity between~\eqref{PerMatch} and~\eqref{HomMatch}, with the only difference coming from the first and last matching conditions. The first matching condition in~\eqref{PerMatch} simply describes iterates in the Poincar\'e section near $\tilde\gamma_1(0)$, while the final condition is the periodicity constraint that guarantees that after the trajectory transfers between $k$ local Poincar\'e sections it returns to where it began. 

The process of satisfying the matching conditions~\eqref{PerMatch} is nearly identical to that of satisfying~\eqref{HomMatch} and will therefore be omitted. Briefly, one defines a function whose roots are exactly the choices of $(a^s_1,a^u_1,\dots,a^s_k,a^u_k)$ near the origin that satisfy the matching equations~\eqref{PerMatch}. The invertible matrix required to apply Lemma~\ref{lem:Roots} is given by
\begin{equation}
	\begin{bmatrix}
			0 & \zeta^s_2 & -1 & \cdots & 0 & 0 & 0 \\ 0 & \zeta^u_2 & 0 & \cdots & 0 & 0 & 0 \\ 
			\vdots & \vdots & \vdots & \ddots & \vdots & \vdots & \vdots \\
			0 & 0 & 0 & \cdots & \zeta^s_{k-1} & -1 & 0\\
			0 & 0 & 0 & \cdots &\zeta^u_{k-1} & 0 & 0\\
			-1 & 0 & 0 & \cdots & 0 & 0 & \zeta^s_k \\
			0 & 0 & 0 & \cdots & 0 & 0 & \zeta^u_k
		\end{bmatrix} \in \R^{2k\times 2k},	
\end{equation}  
where the $\zeta^s_i$ and $\zeta^u_i$ are as they are defined in~\eqref{Zeta}. The argument that the above matrix is invertible follows from the previous arguments that the $\zeta^u_i \neq 0$ for each $1 \leq i \leq k$, and so the application of Lemma~\ref{lem:Roots} is entirely analogous to the proof of Theorem~\ref{thm:Main}(1) in the previous subsection.

\clearpage
\section{Discussion}\label{sec:Conclusion}

This manuscript has provided a comprehensive exploration of saddle mediated transport in the double pendulum. Our work describes the state space organization of a benchmark chaotic system with a focus on the implications of our findings in physical space. Our methods combined both numerical and analytical techniques, guided by previous investigations into saddle mediated transport, while also providing a general result that describes macroscopic transport in general Hamiltonian dynamical systems. We began by reviewing index-1 saddles of a Hamiltonian system, while also demonstrating the importance of these equilibria by recreating known results from the PCR3BP. We then showed that the state space organization of the physical double pendulum is similarly organized to the PCR3BP, primarily attributed to the presence of two index-1 saddles. The end result of this manuscript is twofold: we have extended the literature on saddle mediated transport to the double pendulum and provided a powerful theorem that can be used to produce itineraries throughout phase space for a large class of Hamiltonian systems. 

The results of this manuscript have extended the concept of the Interplanetary Transport Network to the double pendulum, similarly showing that there are physically determined pathways that allow one to move throughout phase space at no extra energy expenditure. The relative ease of building a double pendulum means that these transport mechanisms can be tested and followed physically as a table-top analogue of interplanetary space travel. Unfortunately, unlike the PCR3BP, a physical double pendulum model has frictional forces which destroy the tube structures as global invariant manifolds. However, for small dissipation in the system, these tubes will approximately remain on long timescales for which a control algorithm can be implemented to correct any discrepancies due to friction. Many researchers have sought to control the chaotic dynamics of the double pendulum by putting its anchored arm on a movable cart. These previous investigations are primarily concerned with stabilizing the unstable steady-states of the pendulum~\cite{driver2004design,zhong2001energy,ControlPend,hesse2018reinforcement}, such as the index-1 saddles, while the work herein opens up the possibility to stabilize a variety of acrobatic movements. With a double pendulum on a cart, methods of controlling chaos~\cite{OGY,ControlChaos1,ControlChaos2} could be implemented to introduce small movements of the cart to re-orient the pendulum arms to correct for any divergence of the tubes due to friction. Implementing these control methods on a physical system may require the aid of data-driven control methods~\cite{BramUPO} and/or discrepancy modelling~\cite{kaheman2019learning}, while also being guided by recent theoretic investigations into the effect of small amounts of dissipation or periodic forcing that break the Hamiltonian structure~\cite{DissRoss,ZhongGeometry,PerForced}. This is an avenue of ongoing work that builds on the theory in this manuscript.      

In a follow-up theoretical investigation, one may wish to understand the dynamics generated by the index-2 Up-Up saddle point in the double pendulum. This equilibrium has a two-dimensional stable and unstable manifold, and so does not have the nearby UPOs which give the tube structure near the index-1 saddles. Nonetheless, the two-dimensional manifolds of the Up-Up state could intersect along a one-dimensional curve, resulting in homoclinic orbits that similarly produce macroscopic transport mechanisms at higher energy values in the double pendulum. One may similarly seek to identify the transport pathways in the triple pendulum, formed by appending a third arm to the double pendulum. Similar to the double pendulum, experimental investigations have gone into stabilizing the arms of the triple pendulum on a cart at various unstable steady-states \cite{gluck2013swing,yaren2018stabilization,graichen2005fast,medrano1997robust}. Furthermore, the triple pendulum has three index-1 saddles: Down-Down-Up, Down-Up-Down, and Up-Down-Down, which are named analogously to the states in the double pendulum. The `tube' structures near these index-1 saddles are now three-dimensional, corresponding to a sphere crossed with a line, and so intersections should be expected to be two-dimensional~\cite{Chemical2}. Thus, the homoclinic and heteroclinic orbits in the double pendulum become two-dimensional invariant manifolds whose dynamics asymptotically approach neighborhoods of the index-1 saddles of the triple pendulum. Both situations require a delicate numerical treatment that will be left to a follow-up investigation.

\section*{Acknowledgments} 
The authors acknowledge funding support from the Army Research Office (ARO W911NF-19-1-0045) and National Science Foundation AI Institute in Dynamic Systems (grant number 2112085). 
The authors also would like to thank Shane Ross for valuable discussions.

\appendix

\section{Governing Equations of the PCR3BP}

In Section~\ref{sec:Background} we illustrated the tube dynamics of index-1 saddles using numerical simulations from the planar circular restricted three-body problem (PCR3BP). We assume there are two massive bodies with masses $\mu_1$ and $\mu_2$ which form their own two body problem, while a (relatively) weightless body such as a satellite or shuttle moving in the gravitational field generated by these objects. Here we present the relevant dynamical system for the third weightless body restricted to the orbital plane of the two massive bodies which constitute the Hamiltonian PCR3BP system. We define the states of the system as $x, y, v_x$ and $v_y$, representing the position of the third body in the plane and its velocity in each independent direction. 

The potential energy of the system is given by 
\begin{equation}
    \label{TBP_V}
    V_{3BP}=-\frac{1}{2}\left(\mu_{1} r_{1}^{2}+\mu_{2} r_{2}^{2}\right)-\frac{\mu_{1}}{r_{1}}-\frac{\mu_{2}}{r_{2}},
\end{equation}
where 
\begin{equation}\label{TBP_Parameters}
    \begin{split}
        r_{1}^{2}&=\left(x+\mu_{2}\right)^{2}+y^{2}, \\
        r_{2}^{2}&=\left(x-\mu_{1}\right)^{2}+y^{2},\\
        \mu&=\frac{m_{2}}{m_{1}+m_{2}},\\
        \mu_1&=1-\mu,\\
        \mu_2&=\mu.
    \end{split}
\end{equation}
The kinetic energy is given by
\begin{equation}
    T_{3BP}=\frac{1}{2}(v_x^2+v_y^2),
\end{equation}
so that the total energy of the PCR3BP is 
\begin{equation}\label{Eng_TBP}
    \mathcal{H}_{3BP}=T_{3BP}+V_{3BP}.
\end{equation}
Using the above Hamiltonian, we can arrive at the corresponding first-order ODE describing the equations of motion of the PCR3BP. The result is given by
\begin{equation}\label{PCR3BP_EOM}
    \begin{split}
        \dot{x}&=v_x,\\
        \dot{y}&=v_y,\\
        \dot{v}_x&=2v_y+x-\frac{(x-(1 - \mu))\mu}{r_1^3}-\frac{(1 - \mu)(x+\mu)}{r_2^3},\\
        \dot{v}_y&=-2v_x+y-\frac{\mu y}{r_1^3}-\frac{(1-\mu)y}{r_2^3},
    \end{split}
\end{equation}
Here $\mu$ is the mass ratio of the two massive bodies. For the examples provided in Section~\ref{sec:Background} we have followed~\cite{Koon} used taken $\mu=9.537\times 10^{-4}$, which is the mass ratio of Jupiter to the Sun in our solar system. 

The $L_1$ and $L_2$ index-1 saddle Lagrangian points lie on the line $y=0$. Such points are equilibria of~\eqref{PCR3BP_EOM} with $y = 0$ and, per \cite[Section~2.5]{RossBook}, are given by $x = (1- \mu) \pm \gamma$, where $\gamma$ are solutions of the quintic polynomials
\begin{equation}\label{TBP_quintic}
    \gamma^{5} \mp(3-\mu) \gamma^{4}+(3-2 \mu) \gamma^{3}-\mu \gamma^{2} \pm 2 \mu \gamma-\mu=0.
\end{equation}
Precisely, the $L_1$ Lagrange point comes from a solution $\gamma$ with the upper signs in the above polynomial, while the $L_2$ Lagrange point comes from using lower signs. With the value $\mu=9.537\times 10^{-4}$, the corresponding $\gamma$ values in~\eqref{TBP_quintic} are $\gamma_1=0.06667655835869524$ and $\gamma_2=0.06978018274821962$, thus leading to the $x$ positions of the $L_1$ and $L_2$ Lagrangian points
\begin{equation}\label{TBP_L1L2_Position}
    \begin{split}
        x_{L_1} &=(1-\mu)-\gamma_1=0.9323697416413048, \\
        x_{L_2} &=(1-\mu)+\gamma_2=1.0688264827482197.
    \end{split}
\end{equation}
The coordinates $(x_{L_i},0,0,0)$, $i = 1,2$, thus represent the location of the index-1 saddles of interest in the PCR3BP~\eqref{PCR3BP_EOM}.

\section{The Point-Mass Double Pendulum}

Here we will provide the relevant equations of motion for the point-mass double pendulum and describe the analogous linear analysis to that of~\S\ref{sec:linearanalysis}. The goal is to show that the Down-Up and Up-Down equilibria remain index-1 saddles for the simplified point-mass double pendulum. Furthermore, we comment that although it is not reported here, there is a nearly identical tube structure to what we have shown for the physical double pendulum, meaning that our election to study the more physically realistic physical double pendulum presents little difference to the more typically studied point-mass double pendulum. 

To begin, let $m_1$ denote the point mass of the first (base) arm and $m_2$ be the point mass of the second arm. We take the length of the first and second arms to be $\ell_1$ and $\ell_2$, respectively. The angle $\theta_1$ represents the angular displacement of the first arm from the downward vertical position, while $\theta_2$ is the angular displacement of the second arm. We again refer the reader to the left panel of Figure~\ref{Fig:DP_Illustrate} for visual reference. 

The kinetic energy of the double pendulum is given by
\begin{equation}\label{DPKinetic}
	T = \frac{1}{2}(m_1+m_2)\ell_1^2\omega_1^2+\frac{1}{2}m_2\ell_2^2\omega_2^2+m_2\ell_1\ell_2\omega_1\omega_2\cos(\theta_2 - \theta_1)
\end{equation}
and the potential energy is given by
\begin{equation}\label{DPPotential}
	V = -(m_1+m_2)\ell_1g\cos(\theta_1) - m_2\ell_2g\cos(\theta_2).	
\end{equation}
Hence, the total energy of the system is given by the Hamiltonian function 
\begin{equation}\label{DPHamiltonian}
	\mathcal{H}(\theta_1,\theta_2,\omega_1,\omega_2) = T + V
\end{equation}
composed of the sum of the kinetic~\eqref{DPKinetic} and potential~\eqref{DPPotential} energies. Following as in Section~\ref{sec:EOM} we arrive at the corresponding first-order conservative ODE describing the equations of motion for the point-mass double pendulum:
\begin{equation}\label{DPeom}
	\begin{split}
        \dot{\theta}_1&=\omega_1\\
        \dot{\theta}_2&=\omega_2\\
        \dot{\omega}_1&=\frac{A_1+A_2+A_3}{\ell_1^2\ell_2(m_1+m_2-m_2\cos^2(\theta_2 - \theta_1))}\\
        \dot{\omega}_2&=\frac{B_1+B_2+B_3}{\ell_1\ell_2^2m_2(m_1+m_2 - m_2\cos^2(\theta_2 - \theta_1))}
    \end{split}.
\end{equation}
Here we have 
\begin{equation}\label{DPeom2}
	\begin{split}
		A_1 &=m_2\ell_1\ell_2^2\omega_2^2\sin(\theta_2 - \theta_1),\\
        A_2 &=\frac{1}{2}m_2\ell_1^2\ell_2\omega_1^2\sin(2\theta_2 - 2\theta_1),\\
        A_3 &= \ell_1\ell_2g[m_2\sin(\theta_2)\cos(\theta_2 - \theta_1)-(m_1+m_2)\sin(\theta_1)],\\
        B_1 &=-\frac{1}{2}\ell_1\ell_2^2m_2^2\omega_2^2\sin(2\theta_2 - 2\theta_1),\\
        B_2 &=-\ell_1^2\ell_2m_2(m_1+m_2)\omega_1^2\sin(\theta_2 - \theta_1),\\
        B_3 &=-\ell_1\ell_2m_2g(m_1+m_2)[\sin(\theta_2)-\sin(\theta_1)\cos(\theta_2 - \theta_1)],
	\end{split}
\end{equation}
where we have neglected any friction or control terms that could be added to the system.  

We now follow the linear analysis of~\S\ref{sec:linearanalysis} to show that again the Down-Up and Up-Down steady-states are index-1 saddles of~\eqref{DPeom}. Again, equilibria come in the form $(\theta_1,\theta_2,\omega_1,\omega_2) = (\pi k_1,\pi k_2,0,0)$ for every pair of integers $(k_1,k_2) \in \mathbb{Z}^2$ and all of these equilibria are symmetric with respect to the action of the reverser~\eqref{Reverser}. As with the physical double pendulum, by periodicity of the $\theta_1,2$ components we have four equilibria that merit investigation: Down-Down, Down-Up, Up-Down, and Up-Up, as described in~\eqref{DPsteady}. 

Linearizing~\eqref{DPeom} about an equilibrium $(\theta_1,\theta_2,\omega_1,\omega_2) = (\pi k_1,\pi k_2,0,0)$ with $k_1,k_2\in\mathbb{Z}$ results in the matrix
\begin{equation}\label{DPLinMat2}
	\begin{bmatrix}
		0 & 0 & 1 & 0 \\
		0 & 0 & 0 & 1 \\
		\frac{-g(m_1 + m_2)\cos(\pi k_1)}{\ell_1(m_1+m_2-m_2\cos^2(\pi k_2 - \pi k_1))} & \frac{gm_2 \cos(\pi k_2)\cos(\pi k_2 - \pi k_1)}{\ell_1(m_1+m_2-m_2\cos^2(\pi k_2 - \pi k_1))} & 0 & 0 \\
		\frac{g(m_1 + m_2)\cos(\pi k_1)\cos(\pi k_2 - \pi k_1)}{\ell_2(m_1+m_2 - m_2\cos^2(\pi k_2 - \pi k_1))} & \frac{-g(m_1 + m_2) \cos(\pi k_2)}{\ell_2(m_1+m_2 - m_2\cos^2(\pi k_2 - \pi k_1))} & 0 & 0
	\end{bmatrix}.
\end{equation} 
The block structure of~\eqref{DPLinMat2} gives the square of its eigenvalues are equal to those of the lower left $2\times 2$ matrix
\begin{equation}\label{MiniMat2}
	\begin{bmatrix}
		\frac{-g(m_1 + m_2)\cos(\pi k_1)}{\ell_1(m_1+m_2-m_2\cos^2(\pi k_2 - \pi k_1))} & \frac{gm_2 \cos(\pi k_2)\cos(\pi k_2 - \pi k_1)}{\ell_1(m_1+m_2-m_2\cos^2(\pi k_2 - \pi k_1))} \\
		\frac{g(m_1 + m_2)\cos(\pi k_1)\cos(\pi k_2 - \pi k_1)}{\ell_2(m_1+m_2 - m_2\cos^2(\pi k_2 - \pi k_1))} & \frac{-g(m_1 + m_2) \cos(\pi k_2)}{\ell_2(m_1+m_2 - m_2\cos^2(\pi k_2 - \pi k_1))} 
	\end{bmatrix}.
\end{equation}
We therefore arrive at the following results.\\

\noindent{\bf Down-Down.} The Down-Down state is obtained by setting $k_1 = k_2 = 0$. In this case~\eqref{MiniMat2} becomes
\begin{equation}
	\begin{bmatrix}
		\frac{-g(m_1 + m_2)}{\ell_1m_1} & \frac{gm_2}{\ell_1m_1} \\
		\frac{g(m_1 + m_2)}{\ell_2m_1} & \frac{-g(m_1 + m_2)}{\ell_2m_1}
	\end{bmatrix}.
\end{equation} 
Since the trace of the above matrix is negative and the determinant is positive for all $m_1,m_2,\ell_1,\ell_2 > 0$, it follows that both eigenvalues are distinct and strictly negative. Therefore, the linearization~\eqref{DPLinMat2} about the Down-Down state has two pairs of purely complex eigenvalues, making it a linear center. \\

\noindent{\bf Down-Up.} The Down-Up equilibrium is obtained by setting $k_1 = 0$ and $k_2 = 1$. The matrix~\eqref{MiniMat2} becomes 
\begin{equation}
	\begin{bmatrix}
		\frac{-g(m_1 + m_2)}{\ell_1m_1} & \frac{gm_2}{\ell_1m_1} \\
		\frac{-g(m_1 + m_2)}{\ell_2m_1} & \frac{g(m_1 + m_2)}{\ell_2m_1}
	\end{bmatrix}
\end{equation} 
which has a negative determinant for all relevant parameter values. Therefore, the above matrix has one positive and one negative real eigenvalue, thus making the Down-Up equilibrium an index-1 saddle since~\eqref{DPLinMat2} has one positive eigenvalue, one negative eigenvalue, a pair purely imaginary eigenvalues when $k_1 = 0$ and $k_2 = 1$.  \\

\noindent{\bf Up-Down.} We obtain the Up-Down equilibrium by setting $k_1 = 1$ and $k_2 = 0$. The matrix~\eqref{MiniMat2} is then given by
\begin{equation}
	\begin{bmatrix}
		\frac{g(m_1 + m_2)}{\ell_1m_1} & \frac{-gm_2}{\ell_1m_1} \\
		\frac{g(m_1 + m_2)}{\ell_2m_1} & \frac{-g(m_1 + m_2)}{\ell_2m_1}
	\end{bmatrix}
\end{equation} 
and following as in the Down-Up equilibrium, we find that the Up-Down equilibrium is also an index-1 saddle. \\

\noindent{\bf Up-Up.} The Up-Up state has $k_1 = k_2 = 1$, resulting in~\eqref{MiniMat2} taking the form
\begin{equation}
	\begin{bmatrix}
		\frac{g(m_1 + m_2)}{\ell_1m_1} & \frac{-gm_2}{\ell_1m_1} \\
		\frac{-g(m_1 + m_2)}{\ell_2m_1} & \frac{g(m_1 + m_2)}{\ell_2m_1}
	\end{bmatrix}.
\end{equation} 
The above matrix is the result of negating all of the entries of the Down-Down analysis by $-1$. It follows that the linearization~\eqref{DPLinMat2} about the Up-Up state has two positive eigenvalues and two negative eigenvalues. From the nomenclature above, the Up-Up state is an {\em index-2 saddle}. \\

\noindent From above we can see that the Down-Up and Up-Down equilibria are index-1 saddles, similar to the physical pendulum. \\


 \begin{spacing}{.77}
\addcontentsline{toc}{section}{References}
\bibliographystyle{abbrv}
\bibliography{DPtransport}

\begin{thebibliography}{10}

\bibitem{Arnold}
V.~I. Arnol'd.
\newblock {\em Mathematical methods of classical mechanics}, volume~60.
\newblock Springer Science \& Business Media, 2013.

\bibitem{SinglePendulum}
G.~L. Baker and J.~A. Blackburn.
\newblock {\em The pendulum: a case study in physics}.
\newblock OUP Oxford, 2008.

\bibitem{PlanetNine3}
J.~Becker, T.~Khain, S.~J. Hamilton, F.~Adams, D.~W. Gerdes, L.~Zullo,
  K.~Franson, S.~Millholland, G.~Bernstein, M.~Sako, et~al.
\newblock Discovery and dynamical analysis of an extreme trans-{N}eptunian
  object with a high orbital inclination.
\newblock {\em The Astronomical Journal}, 156(2):81, 2018.

\bibitem{Hiten}
E.~Belbruno.
\newblock {\em Capture dynamics and chaotic motions in celestial mechanics:
  With applications to the construction of low energy transfers}.
\newblock Princeton University Press, 2018.

\bibitem{Bernoulli}
D.~Bernoulli.
\newblock Theoremata de oscillationibus corporum filo flexili connexorum et
  catenae verticaliter suspensae.
\newblock {\em Comm. Acad. Sci. Petrop}, 6(1732-1733):108--122, 1738.

\bibitem{ControlChaos1}
S.~Boccaletti, C.~Grebogi, Y.-C. Lai, H.~Mancini, and D.~Maza.
\newblock The control of chaos: theory and applications.
\newblock {\em Physics reports}, 329(3):103--197, 2000.

\bibitem{BramIsola}
J.~J. Bramburger.
\newblock Isolas of multi-pulse solutions to lattice dynamical systems.
\newblock {\em Proceedings of the Royal Society of Edinburgh Section A:
  Mathematics}, 151(3):916--952, 2021.

\bibitem{RadialPulse}
J.~J. Bramburger, D.~Altschuler, C.~I. Avery, T.~Sangsawang, M.~Beck,
  P.~Carter, and B.~Sandstede.
\newblock Localized radial roll patterns in higher space dimensions.
\newblock {\em SIAM Journal on Applied Dynamical Systems}, 18(3):1420--1453,
  2019.

\bibitem{BramUPO}
J.~J. Bramburger, J.~N. Kutz, and S.~L. Brunton.
\newblock Data-driven stabilization of periodic orbits.
\newblock {\em IEEE Access}, 9:43504--43521, 2021.

\bibitem{BramLDS}
J.~J. Bramburger and B.~Sandstede.
\newblock Spatially localized structures in lattice dynamical systems.
\newblock {\em Journal of Nonlinear Science}, pages 1--42, 2019.

\bibitem{Caillau}
J.-B. Caillau, B.~Daoud, and J.~Gergaud.
\newblock Minimum fuel control of the planar circular restricted three-body
  problem.
\newblock {\em Celestial Mechanics and Dynamical Astronomy}, 114(1):137--150,
  2012.

\bibitem{CompAst3}
M.~J. Capinski.
\newblock Computer assisted existence proofs of {L}yapunov orbits at ${L}_2$
  and transversal intersections of invariant manifolds in the {J}upiter--{S}un
  {PCR3BP}.
\newblock {\em SIAM Journal on Applied Dynamical Systems}, 11(4):1723--1753,
  2012.

\bibitem{Champneys}
A.~Champneys and J.~Toland.
\newblock Bifurcation of a plethora of multi-modal homoclinic orbits for
  autonomous {H}amiltonian systems.
\newblock {\em Nonlinearity}, 6(5):665, 1993.

\bibitem{christini1996experimental}
D.~J. Christini, J.~J. Collins, and P.~S. Linsay.
\newblock Experimental control of high-dimensional chaos: The driven double
  pendulum.
\newblock {\em Physical Review E}, 54(5):4824, 1996.

\bibitem{Conley}
C.~C. Conley.
\newblock Low energy transit orbits in the restricted three-body problems.
\newblock {\em {SIAM} Journal on Applied Mathematics}, 16(4):732--746, July
  1968.

\bibitem{PlanetNine}
C.~de~la Fuente~Marcos, R.~de~la Fuente~Marcos, and S.~J. Aarseth.
\newblock Dynamical impact of the {P}lanet {N}ine scenario: {N}-body
  experiments.
\newblock {\em Monthly Notices of the Royal Astronomical Society: Letters},
  460(1):L123--L127, 2016.

\bibitem{Chemical}
N.~De~Leon, M.~A. Mehta, and R.~Q. Topper.
\newblock Cylindrical manifolds in phase space as mediators of chemical
  reaction dynamics and kinetics. {I}. {T}heory.
\newblock {\em The Journal of chemical physics}, 94(12):8310--8328, 1991.

\bibitem{driver2004design}
J.~Driver and D.~Thorpe.
\newblock Design, build and control of a single/double rotational inverted
  pendulum.
\newblock {\em The University of Adelaide, School of Mechanical Engineering,
  Australia}, 4, 2004.

\bibitem{fantoni2000energy}
I.~Fantoni, R.~Lozano, and M.~W. Spong.
\newblock Energy based control of the pendubot.
\newblock {\em IEEE Transactions on Automatic Control}, 45(4):725--729, 2000.

\bibitem{ISEE3}
R.~W. Farquhar, D.~P. Muhonen, C.~R. Newman, and H.~S. .~Heubergerg.
\newblock Trajectories and orbital maneuvers for the first libration-point
  satellite.
\newblock {\em Journal of Guidance and Control}, 3(6):549--554, 1980.

\bibitem{PerForced}
J.~Fitzgerald and S.~D. Ross.
\newblock Geometry of transit orbits in the periodically-perturbed restricted
  three-body problem.
\newblock {\em Advances in Space Research}, 2022.

\bibitem{ControlChaos2}
A.~L. Fradkov and R.~J. Evans.
\newblock Control of chaos: Methods and applications in engineering.
\newblock {\em Annual Reviews in Control}, 29(1):33--56, 2005.

\bibitem{freidovich2008periodic}
L.~Freidovich, A.~Robertsson, A.~Shiriaev, and R.~Johansson.
\newblock Periodic motions of the pendubot via virtual holonomic constraints:
  {T}heory and experiments.
\newblock {\em Automatica}, 44(3):785--791, 2008.

\bibitem{Chemical3}
F.~Gabern, W.~Koon, J.~E. Marsden, S.~Ross, and T.~Yanao.
\newblock Application of tube dynamics to non-statistical reaction processes.
\newblock {\em Few-Body Systems}, 38(2-4):167--172, 2006.

\bibitem{Chemical2}
F.~Gabern, W.~S. Koon, J.~E. Marsden, and S.~D. Ross.
\newblock Theory and computation of non-{RRKM} lifetime distributions and rates
  in chemical systems with three or more degrees of freedom.
\newblock {\em Physica D: Nonlinear Phenomena}, 211(3-4):391--406, 2005.

\bibitem{Hom3BP}
M.~Gidea and J.~J. Masdemont.
\newblock Geometry of homoclinic connections in a planar circular restricted
  three-body problem.
\newblock {\em International journal of bifurcation and chaos},
  17(04):1151--1169, 2007.

\bibitem{gluck2013swing}
T.~Gl{\"u}ck, A.~Eder, and A.~Kugi.
\newblock Swing-up control of a triple pendulum on a cart with experimental
  validation.
\newblock {\em Automatica}, 49(3):801--808, 2013.

\bibitem{Barcelona}
G.~G{\'o}mez, F.~B. i~Gimpera.~Barcelona, and E.~S. Agency.
\newblock {\em Study Refinement of Semi-analytical Halo Orbit Theory: Executive
  Summary ; ESOC Contract No.: 8625/89/D/MD (SC)}.
\newblock Fundaci{\'o} Bosch i Gimpera, 1991.

\bibitem{graichen2005fast}
K.~Graichen, M.~Treuer, and M.~Zeitz.
\newblock Fast side-stepping of the triple inverted pendulum via constrained
  nonlinear feedforward control design.
\newblock In {\em Proceedings of the 44th IEEE Conference on Decision and
  Control}, pages 1096--1101. IEEE, 2005.

\bibitem{ControlPend}
K.~Graichen, M.~Treuer, and M.~Zeitz.
\newblock Swing-up of the double pendulum on a cart by feedforward and feedback
  control with experimental validation.
\newblock {\em Automatica}, 43(1):63--71, 2007.

\bibitem{PlanetNine2}
S.~Hadden, G.~Li, M.~J. Payne, and M.~J. Holman.
\newblock Chaotic dynamics of trans-{N}eptunian objects perturbed by planet
  nine.
\newblock {\em The Astronomical Journal}, 155(6):249, 2018.

\bibitem{Haller}
G.~Haller and S.~Wiggins.
\newblock Orbits homoclinic to resonances: the {H}amiltonian case.
\newblock {\em Physica D: Nonlinear Phenomena}, 66(3-4):298--346, 1993.

\bibitem{hesse2018reinforcement}
M.~Hesse, J.~Timmermann, E.~H{\"u}llermeier, and A.~Tr{\"a}chtler.
\newblock A reinforcement learning strategy for the swing-up of the double
  pendulum on a cart.
\newblock {\em Procedia Manufacturing}, 24:15--20, 2018.

\bibitem{JJM2}
W.~Hetebrij and J.~M. James.
\newblock Critical homoclinics in a restricted four-body problem: numerical
  continuation and center manifold computations.
\newblock {\em Celestial Mechanics and Dynamical Astronomy}, 133(2):1--48,
  2021.

\bibitem{Integrable}
N.~J. Hitchin, G.~B. Segal, and R.~S. Ward.
\newblock {\em Integrable systems: Twistors, loop groups, and Riemann
  surfaces}, volume~4.
\newblock OUP Oxford, 2013.

\bibitem{Homburg}
A.~Homburg and J.~Knobloch.
\newblock Multiple homoclinic orbits in conservative and reversible systems.
\newblock {\em Transactions of the American Mathematical Society},
  358(4):1715--1740, 2006.

\bibitem{Homburg2}
A.~J. Homburg and B.~Sandstede.
\newblock Homoclinic and heteroclinic bifurcations in vector fields.
\newblock {\em Handbook of dynamical systems}, 3:379--524, 2010.

\bibitem{kaheman2022experimental}
K.~Kaheman, U.~Fasel, J.~J. Bramburger, B.~Strom, J.~N. Kutz, and S.~L.
  Brunton.
\newblock The experimental multi-arm pendulum on a cart: A benchmark system for
  chaos, learning, and control, 2022.

\bibitem{kaheman2019learning}
K.~Kaheman, E.~Kaiser, B.~Strom, J.~N. Kutz, and S.~L. Brunton.
\newblock Learning discrepancy models from experimental data.
\newblock {\em arXiv preprint arXiv:1909.08574}, 2019.

\bibitem{JJM3}
S.~Kepley and J.~M. James.
\newblock Homoclinic dynamics in a restricted four-body problem: transverse
  connections for the saddle-focus equilibrium solution set.
\newblock {\em Celestial Mechanics and Dynamical Astronomy}, 131(3):1--55,
  2019.

\bibitem{CompAst4}
S.~Kepley and J.~{Mireles James}.
\newblock Chaotic motions in the restricted four body problem via {D}evaney's
  saddle-focus homoclinic tangle theorem.
\newblock {\em Journal of Differential Equations}, 266(4):1709--1755, 2019.

\bibitem{2Pulse}
J.~Knobloch, D.~J. Lloyd, B.~Sandstede, and T.~Wagenknecht.
\newblock Isolas of 2-pulse solutions in homoclinic snaking scenarios.
\newblock {\em Journal of Dynamics and Differential Equations}, 23(1):93--114,
  2011.

\bibitem{KoonGenesis}
W.~S. Koon, M.~W. Lo, J.~E. Marsden, and S.~D. Ross.
\newblock The {G}enesis trajectory and heteroclinic connections.
\newblock In {\em AAS/AIAA Astrodynamics Specialist Conference}, pages 99--451.
  Citeseer, 1999.

\bibitem{RossBook}
W.~S. Koon, M.~W. Lo, J.~E. Marsden, and S.~D. Ross.
\newblock Dynamical systems, the three-body problem and space mission design.
\newblock In {\em Equadiff 99: (In 2 Volumes)}, pages 1167--1181. World
  Scientific, 2000.

\bibitem{Koon}
W.~S. Koon, M.~W. Lo, J.~E. Marsden, and S.~D. Ross.
\newblock Heteroclinic connections between periodic orbits and resonance
  transitions in celestial mechanics.
\newblock {\em Chaos: An Interdisciplinary Journal of Nonlinear Science},
  10(2):427--469, June 2000.

\bibitem{liao2019differentiable}
H.-J. Liao, J.-G. Liu, L.~Wang, and T.~Xiang.
\newblock Differentiable programming tensor networks.
\newblock {\em Physical Review X}, 9(3):031041, 2019.

\bibitem{LMS}
J.~Llibre, R.~Mart\'inez, and C.~Sim\'o.
\newblock Tranversality of the invariant manifolds associated to the {L}yapunov
  family of periodic orbits near {L}2 in the restricted three-body problem.
\newblock {\em Journal of Differential Equations}, 58(1):104--156, 1985.

\bibitem{Surfing}
M.~Lo and S.~Ross.
\newblock Surfing the solar system: invariant manifolds and the dynamics of the
  solar system.
\newblock {\em JPL IOM}, 312(97):2--4, 1997.

\bibitem{LoGenesis}
M.~W. Lo, B.~G. Williams, W.~E. Bollman, D.~Han, Y.~Hahn, J.~L. Bell, E.~A.
  Hirst, R.~A. Corwin, P.~E. Hong, K.~C. Howell, B.~Barden, and R.~Wilson.
\newblock Genesis mission design.
\newblock {\em The Journal of the Astronautical Sciences}, 49(1):169--184,
  2001.

\bibitem{MarsdenAMS}
J.~Marsden and S.~Ross.
\newblock New methods in celestial mechanics and mission design.
\newblock {\em Bulletin of the American Mathematical Society}, 43(1):43--73,
  2006.

\bibitem{MasdemontReview}
J.~J. Masdemont.
\newblock A review of invariant manifold dynamics of the {CRTBP} and some
  applications.
\newblock {\em Nonlinear science and complexity}, pages 139--146, 2011.

\bibitem{McGehee}
R.~P. McGehee.
\newblock {\em Some homoclinic orbits for the restricted three-body problem}.
\newblock The University of Wisconsin-Madison, 1969.

\bibitem{medrano1997robust}
G.~Medrano-Cerda.
\newblock Robust stabilization of a triple inverted pendulum-cart.
\newblock {\em International Journal of Control}, 68(4):849--866, 1997.

\bibitem{Moser}
J.~Moser.
\newblock On the generalization of a theorem of {A}. {L}iapounoff.
\newblock {\em Communications on Pure and Applied Mathematics}, 11(2):257--271,
  1958.

\bibitem{JJM1}
M.~Murray and J.~M. James.
\newblock Homoclinic dynamics in a spatial restricted four-body problem: blue
  skies into {S}male horseshoes for vertical {L}yapunov families.
\newblock {\em Celestial Mechanics and Dynamical Astronomy}, 132(6):1--44,
  2020.

\bibitem{TBPreview}
Z.~E. Musielak and B.~Quarles.
\newblock The three-body problem.
\newblock {\em Reports on Progress in Physics}, 77(6):065901, 2014.

\bibitem{myers2020low}
A.~D. Myers, J.~R. Tempelman, D.~Petrushenko, and F.~A. Khasawneh.
\newblock Low-cost double pendulum for high-quality data collection with
  open-source video tracking and analysis.
\newblock {\em HardwareX}, 8:e00138, 2020.

\bibitem{Naik}
S.~Naik and S.~D. Ross.
\newblock Geometry of escaping dynamics in nonlinear ship motion.
\newblock {\em Communications in Nonlinear Science and Numerical Simulation},
  47:48--70, 2017.

\bibitem{Ship1}
A.~H. Nayfeh, D.~T. Mook, and L.~R. Marshall.
\newblock Nonlinear coupling of pitch and roll modes in ship motions.
\newblock {\em Journal of Hydronautics}, 7(4):145--152, 1973.

\bibitem{Ship2}
A.~H. Nayfeh, D.~T. Mook, and L.~R. Marshall.
\newblock Perturbation-energy approach for the development of the nonlinear
  equations of ship motion.
\newblock {\em Journal of Hydronautics}, 8(4):130--136, 1974.

\bibitem{NBody1}
K.~Onozaki, H.~Yoshimura, and S.~D. Ross.
\newblock Tube dynamics and low energy {E}arth-{M}oon transfers in the 4-body
  system.
\newblock {\em Advances in Space Research}, 60(10):2117--2132, 2017.

\bibitem{OGY}
E.~Ott, C.~Grebogi, and J.~A. Yorke.
\newblock Controlling chaos.
\newblock {\em Physical review letters}, 64(11):1196, 1990.

\bibitem{Jens}
J.~D. Rademacher.
\newblock Homoclinic orbits near heteroclinic cycles with one equilibrium and
  one periodic orbit.
\newblock {\em Journal of Differential Equations}, 218(2):390--443, 2005.

\bibitem{rafat2009dynamics}
M.~Rafat, M.~Wheatland, and T.~Bedding.
\newblock Dynamics of a double pendulum with distributed mass.
\newblock {\em American Journal of Physics}, 77(3):216--223, 2009.

\bibitem{RossL1}
S.~Ross and M.~Lo.
\newblock The lunar {L1} gateway - portal to the stars and beyond.
\newblock In {\em {AIAA} Space 2001 Conference and Exposition}. American
  Institute of Aeronautics and Astronautics, Aug. 2001.

\bibitem{Interplanetary}
S.~D. Ross.
\newblock The interplanetary transport network: Some mathematical
  sophistication allows spacecraft to be maneuvered over large distances using
  little or no fuel.
\newblock {\em American Scientist}, 94(3):230--237, 2006.

\bibitem{Ball}
S.~D. Ross, A.~E. BozorgMagham, S.~Naik, and L.~N. Virgin.
\newblock Experimental validation of phase space conduits of transition between
  potential wells.
\newblock {\em Physical Review E}, 98(5):052214, 2018.

\bibitem{rubi2002swing}
J.~Rub{\i}, A.~Rubio, and A.~Avello.
\newblock Swing-up control problem for a self-erecting double inverted
  pendulum.
\newblock {\em IEE Proceedings-Control Theory and Applications},
  149(2):169--175, 2002.

\bibitem{Schecter}
S.~Schecter.
\newblock Exchange lemmas 1: {D}eng's lemma.
\newblock {\em Journal of Differential Equations}, 245(2):392--410, 2008.

\bibitem{DPChaos}
T.~Shinbrot, C.~Grebogi, J.~Wisdom, and J.~A. Yorke.
\newblock Chaos in a double pendulum.
\newblock {\em American Journal of Physics}, 60(6):491--499, 1992.

\bibitem{Smale}
S.~Smale.
\newblock Differentiable dynamical systems.
\newblock {\em Bulletin of the American mathematical Society}, 73(6):747--817,
  1967.

\bibitem{spong1995swing}
M.~W. Spong.
\newblock The swing up control problem for the acrobot.
\newblock {\em IEEE control systems magazine}, 15(1):49--55, 1995.

\bibitem{timmermann2011discrete}
J.~Timmermann, S.~Khatab, S.~Ober-Bl{\"o}baum, and A.~Tr{\"a}chtler.
\newblock Discrete mechanics and optimal control and its application to a
  double pendulum on a cart.
\newblock {\em IFAC Proceedings Volumes}, 44(1):10199--10206, 2011.

\bibitem{Wiggins}
S.~Wiggins, L.~Wiesenfeld, C.~Jaff\'e, and T.~Uzer.
\newblock Impenetrable barriers in phase-space.
\newblock {\em Phys. Rev. Lett.}, 86:5478--5481, Jun 2001.

\bibitem{CompAst1}
D.~Wilczak and P.~Zgliczynski.
\newblock Heteroclinic connections between periodic orbits in planar restricted
  circular three-body problem--a computer assisted proof.
\newblock {\em Communications in mathematical physics}, 234(1):37--75, 2003.

\bibitem{CompAst2}
D.~Wilczak and P.~Zgliczy{\'n}ski.
\newblock Heteroclinic connections between periodic orbits in planar restricted
  circular three body problem. part {II}.
\newblock {\em Communications in mathematical physics}, 259(3):561--576, 2005.

\bibitem{yaren2018stabilization}
T.~Yaren and S.~Kizir.
\newblock Stabilization control of triple pendulum on a cart.
\newblock In {\em 2018 6th International Conference on Control Engineering \&
  Information Technology (CEIT)}, pages 1--6. IEEE, 2018.

\bibitem{ZhongGeometry}
J.~Zhong and S.~D. Ross.
\newblock Geometry of escape and transition dynamics in the presence of
  dissipative and gyroscopic forces in two degree of freedom systems.
\newblock {\em Communications in Nonlinear Science and Numerical Simulation},
  82:105033, 2020.

\bibitem{DissRoss}
J.~Zhong and S.~D. Ross.
\newblock Global invariant manifolds delineating transition and escape dynamics
  in dissipative systems: an application to snap-through buckling.
\newblock {\em Nonlinear Dynamics}, 104(4):3109--3137, 2021.

\bibitem{Arch}
J.~Zhong, L.~N. Virgin, and S.~D. Ross.
\newblock A tube dynamics perspective governing stability transitions: {A}n
  example based on snap-through buckling.
\newblock {\em International Journal of Mechanical Sciences}, 149:413--428,
  2018.

\bibitem{zhong2001energy}
W.~Zhong and H.~Rock.
\newblock Energy and passivity based control of the double inverted pendulum on
  a cart.
\newblock In {\em Proceedings of the 2001 IEEE International Conference on
  Control Applications (CCA'01)(Cat. No. 01CH37204)}, pages 896--901. IEEE,
  2001.

\end{thebibliography}
 \end{spacing}

\end{document}